\documentclass[11pt]{article}

\usepackage[english]{babel}
\usepackage{upref,amsfonts,amsxtra,latexsym,color, graphs,graphicx}
\usepackage{amssymb,amsmath,amsthm,epsf,mathrsfs,verbatim,hyperref,mathabx}
\usepackage[all,color, matrix, arrow]{xy}
 \usepackage[left=2.9cm,top=2.5cm,right=2.9cm,bottom=2cm,bindingoffset=0.5cm]{geometry}
%\pagestyle{plain}
%\pagenumbering{arabic} 
%\setstretch{1.2}
%\def\labelstyle{}

\newtheorem{theorem}{Theorem}[section]
\newtheorem{lemma}[theorem]{Lemma}
\newtheorem{corollary}[theorem]{Corollary}
\newtheorem{proposition}[theorem]{Proposition}

\newtheorem{conjecture}[theorem]{Conjecture}

\theoremstyle{definition}
\newtheorem{definition}[theorem]{Definition}
\newtheorem{remark}[theorem]{Remark}
\newtheorem{example}[theorem]{Example}
\newtheorem{question}[theorem]{Question}

\numberwithin{equation}{section}
%\numberwithin{equation}{\arabic{section}.\arabic{equation}}
\numberwithin{theorem}{section}

\newcommand{\Ker}{\mathrm{Ker}}

\newcommand{\Id}{\mathrm{id}}

\newcommand{\Span}{\mathrm{span}}

\newcommand{\cU}{{\cal U}}
\newcommand{\cP}{{\cal P}}

\newcommand{\cO}{{\cal O}}

\newcommand{\cF}{{\mathcal F}}
\newcommand{\C}{{\mathbb C}}
\newcommand{\Z}{{\mathbb Z}}
\newcommand{\N}{{\mathbb N}}

\newcommand{\T}{{\mathbb T}}

\newcommand{\I}{{\mathbb I}}

\newcommand{\cK}{{\cal K}}
\newcommand{\cG}{{\mathcal G}}

\newcommand{\Cs}{{$C^*$-al\-ge\-bra}}

\newcommand{\Prim}{\mathrm{Prim}}
\newcommand{\Ideal}{\mathrm{Ideal}}

\newcommand{\sh}{{$^*$-ho\-mo\-mor\-phism}}

%\newcommand{\nprecsim}{{\operatorname{\hskip3pt \precsim\hskip-9pt |\hskip6pt}}}
%\renewcommand{\thetheorem}{\arabic{section}.\arabic{theorem}}

%%%% The four lines below should be uncommented for the color version
\newcommand{\bbullet}{{\color{blue}{\bullet}}} 
\newcommand{\rbullet}{{\color{red}{\bullet}}} 
\newcommand{\redarrow}{\ar@[red]@{-}}
\newcommand{\bluearrow}{\ar@[blue]@{-}}

%%%% The four lines below should be uncommented for the black-white version
%\newcommand{\bbullet}{{\circ}} 
%\newcommand{\rbullet}{{\bullet}} 
%\newcommand{\redarrow}{\ar@{-}} 
%\newcommand{\bluearrow}{\ar@{.}} 

% Operators and relations

%\newenvironment{proof}[1][Proof:]%
%{\begin{trivlist}\item[]\textbf{#1} }%
%{\hbox{}\nobreak\hfill\quad\hbox{$\square$}\end{trivlist}}

%\input mymacros.tex

\date{}
\setcounter{tocdepth}{4}

\begin{document} 

\title{Just-infinite $C^*$-algebras}

\author{Rostislav Grigorchuk\thanks{The first named author was supported by NSF grant DMS-1207699 and NSA grant H98230-15-1-0328}, Magdalena Musat$^\dagger$ and Mikael R\o rdam\thanks{The second  and third named authors were supported  by the Danish National Research Foundation (DNRF) through the Centre for Symmetry and Deformation at University of Copenhagen, and The Danish Council for Independent Research, Natural Sciences.}}

\maketitle

\centerline{\emph{Dedicated to Efim Zelmanov on the occasion of his 60th birthday}}

\begin{abstract} By analogy with the well-established notions of just-infinite groups and just-infinite (abstract) algebras, we initiate a systematic study of just-infinite \Cs s, i.e., infinite dimensional \Cs s for which all proper quotients are finite dimensional. We give a classification of such \Cs s in terms of their primitive ideal space, that leads to a trichotomy. We show that  just-infinite, residually finite dimensional \Cs s do exist by giving an explicit example of (the Bratteli diagram of) an AF-algebra with these properties. 

Further, we discuss when \Cs s and $^*$-algebras associated with a discrete group are just-infinite. If $\cG$  is the Burnside-type group of intermediate growth discovered by the first-named author, which is known to be just-infinite, then its group algebra $\C[\cG]$ and  its group \Cs{} $C^*(\cG)$ are not just-infinite. Furthermore, we show that the algebra $B = \pi(\C[\cG])$ under the Koopman representation $\pi$ of $\cG$ associated with its canonical action on a binary rooted tree is just-infinite. It remains an open problem whether the residually finite dimensional \Cs{} $C^*_\pi(\cG)$ is just-infinite.
\end{abstract}

\section{Introduction}

\noindent A group is said to be just-infinite if it is infinite and all its proper quotients are finite. Just-infinite groups arise, e.g., as branch groups (including the Burnside-type group of intermediate growth discovered by the first named author, see \cite{Gr80}). A trichotomy describes the possible classes of just-infinite groups, see \cite[Theorem 3]{Gr00}.  Each finitely generated infinite group has a just-infinite quotient. Therefore,  if we are interested in finitely  generated  infinite groups  satisfying a certain exotic  property  preserved by homomorphic  images, if such a group exists, then one is also to be found in  the  class  of  just-infinite  groups. 

The purpose of this paper is to investigate just-infinite dimensional \Cs s, defined to be infinite dimensional \Cs s for which all proper quotients by closed two-sided ideals are finite dimensional. (In the future, we shall omit ``dimensional'' and refer to these \Cs s as \emph{just-infinite}. The well-established notion of infiniteness of a unital \Cs, that is, its unit is Murray-von Neumann equivalent to a proper subprojection, is unrelated to our notion of just-infiniteness.) 
Analogous to just-infiniteness in other categories, any infinite dimensional \emph{simple} \Cs{} is just-infinite for trivial reasons. It is also easy to see that if a \Cs{} $A$ contains a simple essential closed two-sided ideal $I$ such that $A/I$ is finite dimensional, then $A$ is just-infinite. (A closed two-sided ideal in a \Cs{} is \emph{essential} if it has non-zero intersection with every other non-zero closed two-sided ideal.)  Hence, e.g., any essential extension of the compact operators on an infinite dimensional separable Hilbert space by a finite dimensional \Cs{} is just-infinite.

We give in Theorem~\ref{thm:types} a classification of just-infinite \Cs s into three types, depending on their primitive ideal space. In more detail,  if $A$ is a separable just-infinite \Cs, then its primitive ideal space is homeomorphic to one of the T$_0$-spaces $Y_n$, $0 \le n \le \infty$, defined in Example~\ref{ex:Y_n}. The case $n=0$ corresponds to $A$ being simple, while the case $1 \le n < \infty$ occurs when  $A$ is an essential extension of a simple \Cs{} by a finite dimensional \Cs{} with $n$ simple summands. If the primitive ideal space of a separable just-infinite \Cs{} $A$ is infinite, then it is homeomorphic to the T$_0$-space $Y_\infty$, and in this case $A$ is residually finite dimensional (i.e., there is a separating family of finite dimensional representations of $A$). The \Cs{}  $A$ has an even stronger property,  described in Section~2, that we call \emph{strictly residually finite dimensional}. We refer the reader to the survey paper \cite{BekLou-2000} for a more comprehensive treatment of residually finite groups and residually finite dimensional group \Cs s. 

To our knowledge, residually finite dimensional, for short RFD, just-infinite \Cs s have not been previously considered in the literature. A priori it is not even clear that they exist. This issue is settled in Section~4, where  we construct  a RFD just-infinite unital AF-algebra, by giving an explicit description of its Bratteli diagram. Residually finite dimensional \Cs s have been studied extensively, see for example \cite{Exel-Loring:RFD}, \cite{Dad:RFD}, \cite{Dad:non-exact},  \cite{BekLou-2000}, \cite{BO-approx}. They are always quasidiagonal (see, e.g., \cite{Brown:qd} or \cite[Chapter 7]{BO-approx}). Interesting classes of \Cs s, such as the full group \Cs s of the free groups and subhomogenous \Cs s, are RFD. Among RFD \Cs s, the just-infinite ones are distinguished by having the smallest possible ideal lattice.

In Section 5, we show that unital, separable, RFD just-infinite \Cs s need not be AF-algebras, nor nuclear, or even exact.  Using a construction of Dadarlat from \cite{Dad:non-exact}, we show that the just-infinite, residually finite dimensional AF-algebra constructed in Section~4 contains a RFD just-infinite,  non-nuclear sub-\Cs. Moreover, this AF-algebra is contained in a non-exact \Cs, which, likewise, is RFD and just-infinite.

Just-infiniteness for \Cs s is less prevalent than the corresponding property in the category of groups. Not every infinite dimensional \Cs{} has a just-infinite quotient, since, for example, no abelian \Cs{} is just-infinite; cf.\ Example~\ref{ex:ji-abelian}. There seems to be no natural condition ensuring that a \Cs{} has a just-infinite quotient. 

We discuss in Section~6 when a group \Cs{} is just-infinite, depending on properties of the group. We prove that the full group \Cs{} $C^*(G)$ of a discrete group $G$ is just-infinite if and only if its group algebra $\C[G]$ has a unique (faithful) $C^*$-norm and it is $^*$-just-infinite, i.e., is just-infinite as a $^*$-algebra. The former property holds trivially for any locally finite group. We do not know of any non-locally finite group for which $\C[G]$ has unique $C^*$-norm. We show that there are locally finite just-infinite groups $G$,  for which the group \Cs{} $C^*(G)$ and the group algebra $\C[G]$ are just-infinite. If the reduced group \Cs{} $C^*_\lambda(G)$ is just-infinite, then either $C^*_\lambda(G)$ must be simple, or $G$ must be amenable, in which case $C^*_\lambda(G)$ coincides with the full group \Cs{} $C^*(G)$. It seems plausible that the group \Cs{} associated with  unitary representations other than the universal or the left-regular one might be just-infinite for a larger class of groups. 

If the group algebra $\C[G]$ of a group $G$ is $^*$-just-infinite, then $G$ must be just-infinite, but the converse does not hold. Indeed, we show in Section 7 (Theorem~\ref{thm:no-branch}) that $\C[G]$ is not $^*$-just-infinite whenever $G$ is a branch group, while there are many branch groups which are just-infinite, e.g., the group $\cG$ of intermediate growth mentioned above. We show that the image $B= \pi(\C[\cG])$ of $\C[\cG]$ under the Koopman representation $\pi$ of $\cG$, associated with the canonical action of $\cG$ on a binary rooted tree, is just-infinite. We leave open the question whether or not the $C^*$-completion $C^*_\pi(\cG)$ of $B$ is just-infinite. In the affirmative case, this would provide an example of a RFD just-infinite \Cs{} arising from a group. 

\section{Preliminaries} 

\noindent As we shall later describe just-infinite \Cs s in terms of their \emph{primitive ideal space}, and as the interesting cases of just-infinite \Cs s are those that are \emph{residually finite dimensional}, we review in this section the relevant background.

\subsection{The primitive ideal space of a $C^*$-algebra}

\noindent 
A \Cs{} $A$ is said to be \emph{primitive} if it admits a faithful irreducible representation on some Hilbert space. It is said to be \emph{prime} if, whenever $I$ and $J$ are closed two-sided ideals in $A$ such that $I \cap J=0$, then either $I=0$, or $J=0$. It is easy to see that every primitive \Cs{} is prime, and it is a non-trivial result that the converse holds for all separable \Cs s; cf.\ \cite[Proposition~4.3.6]{Ped:C*-aut}. However, there are (complicated) examples of non-separable \Cs s that are prime, but not primitive, see \cite{Weaver:non-primitive}.

A closed two-sided ideal $I$  in a \Cs{} $A$ is said to be \emph{primitive} if $I \ne A$ and $I$ is the kernel of an irreducible representation of $A$ on some Hilbert space. The \emph{primitive ideal space}, $\Prim(A)$, is the set of all primitive ideals in $A$.
A closed two-sided ideal $I$ of $A$ is primitive if and only if the quotient $A/I$ is a primitive \Cs. In particular, $0 \in \Prim(A)$ if and only if $A$ is primitive. 
The primitive ideal space is a T$_0$-space when equipped with the \emph{hull-kernel topology}, which is given as follows: the closure $\overline{\cF}$ of a subset $\cF \subseteq \Prim(A)$ consists of all ideals $I \in \Prim(A)$ which contain $\bigcap_{J \in \cF} J$. If $A$ is primitive, so that $0 \in \Prim(A)$, then $\overline{\{0\}} = \Prim(A)$. 
In the commutative case, the primitive ideal space  is the usual spectrum:  $\Prim(C_0(X))$ is homeomorphic to $X$, whenever $X$ is a locally compact Hausdorff space.  The following fact will be used several times in the sequel:

\begin{remark} \label{ex:finite-dim}
Each finite dimensional \Cs{} $A$ is (isomorphic to) a direct sum of full matrix algebras,
$$A \cong M_{k_1}(\C) \oplus M_{k_2}(\C) \oplus \cdots \oplus M_{k_n}(\C),$$
for some positive integers $n, k_1, k_2, \dots, k_n$. As each matrix algebra is simple, $\Prim(A)$ can be naturally identified with the set $\{1,2, \dots, n\}$, equipped with the discrete topology. The primitive ideal space is Hausdorff in this case. 
\end{remark}

\noindent A closed subset $F$ of a T$_0$-space $X$ is said to be \emph{prime} if, whenever $F'$ and $F''$ are closed subsets of $X$ such that $F \subseteq F' \cup F''$, then $F$ is contained in one of $F'$ and $F''$. The closure of any singleton is clearly prime. A \emph{spectral space} is a T$_0$-space for which the converse holds: each closed prime subset is the closure of a singleton.
The results listed in the proposition below can be found in \cite[Sect.\ 5.4]{Mur:C*}, or \cite[Sect.\ 4.3]{Ped:C*-aut}:

\begin{proposition} \label{prop:Prim1}
 Let $A$ be a \Cs.
\begin{enumerate}
\item If $A$ is unital, then $\Prim(A)$ is a compact\footnote{A (possibly non-Hausdorff) topolotical space is said to be compact if it has the Heine-Borel property: each open cover can be refined to a finite open cover. Sometimes this  property is referred to as quasi-compactness.} $T_0$-space.
\item Let $I \in \Prim(A)$. Then $\{I\}$ is closed in $\Prim(A)$ if and only if $I$ is a maximal proper ideal in $A$, i.e., if and only if the quotient $A/I$ is simple.
\item If $A$ is separable, then $\Prim(A)$ is a second countable spectral space.
\end{enumerate}
\end{proposition}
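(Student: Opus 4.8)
The plan is to base everything on the order-reversing bijection between closed two-sided ideals of $A$ and open subsets of $\Prim(A)$. For a closed two-sided ideal $J$ of $A$ write $U_J = \{I \in \Prim(A) : J \not\subseteq I\}$. I would first record that $U_J$ is open, that every open subset of $\Prim(A)$ is of this form, that $U$ sends the closed ideal generated by a family $(J_\alpha)$ to $\bigcup_\alpha U_{J_\alpha}$, and that $J \mapsto U_J$ is injective — the last point being exactly the semisimplicity of \Cs s, i.e.\ $J = \bigcap\{I \in \Prim(A) : J \subseteq I\}$ for every closed ideal $J$. Two consequences are immediate and used repeatedly: $U_J = \Prim(A)$ if and only if $J$ lies in no primitive ideal, if and only if $J = A$ (a proper closed ideal $J$ sits inside the kernel of an irreducible representation of the nonzero \Cs{} $A/J$); and $\Prim(A)$ is $T_0$, since for $I_1 \ne I_2$ in $\Prim(A)$ one of them, say $I_1$, is not contained in the other, whence $U_{I_1}$ contains $I_2$ but not $I_1$.

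Granting this dictionary, (i) is short: given an open cover $(U_{J_\alpha})_\alpha$ of $\Prim(A)$, let $J$ be the closed ideal generated by all the $J_\alpha$; by the union formula $U_J = \Prim(A)$, so $J = A$, and since $A$ is unital the algebraic sum $\sum_\alpha J_\alpha$, being dense in $A$, contains an element within distance $1$ of the unit, hence an invertible element; that element already lies in a finite partial sum $J_{\alpha_1} + \cdots + J_{\alpha_n}$, which is therefore all of $A$, so $U_{J_{\alpha_1}} \cup \cdots \cup U_{J_{\alpha_n}} = \Prim(A)$ is a finite subcover. Part (ii) is equally direct: the description of the closure gives $\overline{\{I\}} = \{J \in \Prim(A) : J \supseteq I\}$, so $\{I\}$ is closed exactly when $I$ is the only primitive ideal containing it, exactly when $A/I$ has no nonzero proper closed two-sided ideal (any such ideal, being proper, lies in some primitive ideal of $A/I$, which is nonzero as it contains a nonzero ideal), i.e.\ exactly when $A/I$ is simple; and $A/I$ simple is the same as $I$ being a maximal proper ideal, via the correspondence between ideals of $A/I$ and closed ideals of $A$ containing $I$.

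For the second-countability half of (iii), fix a countable dense subset $D \subseteq A$. I would invoke the standard fact that for each $a \in A$ the function $I \mapsto \|a + I\|_{A/I}$ is lower semicontinuous on $\Prim(A)$ (see, e.g., \cite[Sect.\ 5.4]{Mur:C*}), so that $W(a,\ep) := \{I \in \Prim(A) : \|a + I\| > \ep\}$ is open, and then show that $\{W(a,q) : a \in D,\ q \in \Q,\ q > 0\}$ is a basis. Indeed, given an open set $U = U_J$ and $I \in U$, choose $b \in J$ with $\delta := \|b + I\| > 0$ and then $a \in D$ with $\|a - b\| < \delta/4$; then $I \in W(a,\delta/2)$, and for any $I' \in W(a,\delta/2)$ one has $\|b + I'\| \ge \|a + I'\| - \|a-b\| > \delta/4 > 0$, so $b \notin I'$, hence $J \not\subseteq I'$ and $I' \in U$.

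Finally, the spectral property. Closed subsets of $\Prim(A)$ are precisely the hulls $h(J) = \{I \in \Prim(A) : J \subseteq I\}$ of closed two-sided ideals $J$ (again by semisimplicity), and since primitive ideals are prime and $\overline{J'J''} = J' \cap J''$ for closed ideals of a \Cs, one gets $h(J') \cup h(J'') = h(J' \cap J'')$; thus $h(J)$ is prime as a closed set if and only if $J' \cap J'' \subseteq J$ always forces $J' \subseteq J$ or $J'' \subseteq J$, and applying this to $\overline{J'+J}$ and $\overline{J''+J}$ (using $\overline{KL} = K \cap L$) identifies it with $A/J$ being a prime \Cs, i.e.\ $J$ being a prime ideal. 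The proof is then completed by the non-trivial theorem, cited above as \cite[Proposition~4.3.6]{Ped:C*-aut}, that for separable $A$ every prime ideal is primitive: this makes $J \in \Prim(A)$, so $h(J) = \overline{\{J\}}$, which is what ``spectral'' demands. That cited implication is the only genuinely hard input; everything else is the ideal/open-set dictionary (whose single non-formal ingredient is the semisimplicity of \Cs s), the lower semicontinuity of the norm functions, and routine point-set topology.
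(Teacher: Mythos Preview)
Your proof is correct and complete. The paper itself does not prove this proposition at all: it simply records that these facts ``can be found in \cite[Sect.\ 5.4]{Mur:C*}, or \cite[Sect.\ 4.3]{Ped:C*-aut}'' and moves on. What you have written is a faithful and clean unpacking of exactly those standard arguments --- the ideal/open-set dictionary plus semisimplicity for (i) and (ii), lower semicontinuity of $I \mapsto \|a+I\|$ for second countability, and the identification of closed prime subsets with prime ideals together with the cited result \cite[Proposition~4.3.6]{Ped:C*-aut} (prime $=$ primitive in the separable case) for the spectral property. There is nothing to compare beyond noting that you supplied the details the paper chose to outsource.
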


\noindent By Remark~\ref{ex:finite-dim}, the only finite dimensional \Cs s which are primitive are those which are isomorphic to full matrix algebras. Hence, the following holds:

\begin{proposition} \label{prop:singleton}
Let $A$ be a separable \Cs, and let $I \in \Prim(A)$ be such that $A/I$ is finite dimensional. Then $A/I \cong M_k(\C)$, for some $k\in \N$, and $\{I\}$ is closed in $\Prim(A)$. 
\end{proposition}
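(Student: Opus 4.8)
The plan is to read off both conclusions from the two facts recorded immediately before the statement: the structure of finite dimensional \Cs s in Remark~\ref{ex:finite-dim}, and the topological characterisation of closed points of $\Prim(A)$ in Proposition~\ref{prop:Prim1}(ii). First I would note that, since $I$ is a primitive ideal, the quotient $A/I$ is by definition a primitive \Cs, hence prime (as recalled in Section~2.1, every primitive \Cs{} is prime). Being finite dimensional, $A/I$ is isomorphic to a direct sum $M_{k_1}(\C) \oplus \cdots \oplus M_{k_n}(\C)$ as in Remark~\ref{ex:finite-dim}. If $n \ge 2$, then the first summand $M_{k_1}(\C) \oplus 0$ and the complementary summand $0 \oplus (M_{k_2}(\C)\oplus\cdots\oplus M_{k_n}(\C))$ are non-zero closed two-sided ideals with zero intersection, contradicting primeness. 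Hence $n=1$ and $A/I \cong M_k(\C)$ with $k = k_1$.

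For the second conclusion, $M_k(\C)$ is simple, so $I$ is a maximal proper ideal of $A$; by Proposition~\ref{prop:Prim1}(ii) this is precisely equivalent to $\{I\}$ being closed in $\Prim(A)$, which completes the argument.

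I do not expect any genuine obstacle here: the statement is a short bookkeeping consequence of material already in place. The only point that requires a moment's attention is why a finite dimensional primitive \Cs{} must be a single full matrix algebra rather than a larger direct sum, and this is exactly the observation made just before the statement (among finite dimensional \Cs s, the primitive, equivalently prime, ones are the full matrix algebras). Separability of $A$ is not actually used in either step; it is presumably included only to keep the hypotheses aligned with the rest of Section~2.
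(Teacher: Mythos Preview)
Your proof is correct and follows exactly the route the paper takes: the paper does not give a formal proof, but simply notes in the sentence preceding the proposition that, by Remark~\ref{ex:finite-dim}, the only finite dimensional primitive \Cs s are full matrix algebras, and then the closedness of $\{I\}$ is immediate from Proposition~\ref{prop:Prim1}(ii). You have merely spelled out the primeness argument behind the first claim, and your observation that separability is not used is accurate.
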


\noindent
A T$_0$-space $X$ is said to be \emph{totally disconnected} if there is a basis for its topology consisting of compact-open sets. If the projections in a \Cs{} $A$ separate its ideals, then $\Prim(A)$ is totally disconnected. In this situation, we have the following result, which will be discussed in more detail in Section~\ref{sec:just-inf-AF}:

\begin{theorem}[Bratteli--Elliott, \cite{BraEll:AF}] \label{thm:BraEll}
Let $X$ be a second countable, totally disconnected spectral space. Then $X$ is homeomorphic to $\Prim(A)$, for some separable AF-algebra $A$. If $X$ is compact, then $A$ can be taken to be unital.
\end{theorem}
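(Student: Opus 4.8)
The plan is to realize $X$ as the primitive ideal space of an AF-algebra $A=\varinjlim_n A_n$ built from an explicit Bratteli diagram, having first reduced the problem to a lattice-theoretic one. Two facts drive the reduction: for a separable $C^*$-algebra $B$ the space $\Prim(B)$ is sober (a spectral space, by Proposition~\ref{prop:Prim1}(iii)); and when the projections of $B$ separate its ideals — which holds for every AF-algebra — $\Prim(B)$ is totally disconnected, so its compact-open subsets form a distributive lattice canonically isomorphic to the lattice of finitely generated (equivalently, compact) closed two-sided ideals of $B$, and the frame of all open subsets of $\Prim(B)$ is canonically isomorphic to $\Ideal(B)$. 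Now $X$ being a spectral space, its compact-open subsets form a distributive lattice $D$ (with least element $\emptyset$, and with a largest element precisely when $X$ is compact), countable since $X$ is second countable; and the frame $\mathcal O(X)$ of all open subsets of $X$ is the ideal completion of $D$, because $X$ has a basis of compact-open sets. Since a sober space is recovered up to homeomorphism from its frame of open sets, it therefore suffices to construct a separable AF-algebra $A$ with $\Ideal(A)\cong\mathcal O(X)$, equivalently with lattice of finitely generated ideals isomorphic to $D$, and to arrange that $A$ be unital exactly when $D$ has a largest element.

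First I would write $D=\bigcup_{n\ge 1}D_n$ as an increasing union of finite distributive sublattices, each containing $\emptyset$ (and the top of $D$, when there is one); this is possible since a finitely generated distributive lattice is finite. By Birkhoff's representation theorem $D_n$ is the lattice of down-sets of the finite poset $P_n$ of its nonzero join-irreducible elements, and the inclusion $\iota\colon D_n\hookrightarrow D_{n+1}$ is a lattice homomorphism preserving $\emptyset$ (and, from some stage on, the top, precisely when $D$ has one). Then I would build the Bratteli diagram with vertex set $P_n$ at level $n$, placing at $p\in P_n$ a matrix block of size $k_{n,p}$ to be chosen, and drawing an edge (say of multiplicity $1$) from $p\in P_n$ to $q\in P_{n+1}$ precisely when $q\le\iota(p)$ in $D_{n+1}$. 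Choosing the $k_{n,p}$ recursively and large enough produces a separable AF-algebra $A=\varinjlim_n A_n$. When $D$ has a top, from some level on every $q\in P_{n+1}$ receives an edge, because $q\le\iota(\bigvee P_n)$ forces $q\le\iota(p)$ for some $p\in P_n$ by distributivity and join-irreducibility; so the connecting maps may be taken unital and $A$ is unital, whereas otherwise they can be chosen non-unital and $A$ is non-unital.

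The crux — and the step I expect to be the main obstacle — is to verify that $\Ideal(A)$ is the ideal completion of $D$. Here I would invoke the standard correspondence between closed ideals of an AF-algebra and ``hereditary'' subsets of its Bratteli diagram (sets of vertices closed under following edges to higher levels), under which the ideal generated by a finite set $S\subseteq P_n$ of vertices corresponds to the set of vertices reachable from $S$. The key computation is that the vertices of $P_m$ (for $m\ge n$) reachable from $S$ form exactly the down-set in $P_m$ of the image under $\iota$ of $\bigvee S\in D_n$: this follows by induction on $m$ from distributivity of $D_m$ together with join-irreducibility — namely $q\le\iota(d)=\bigvee_{p\le d}\iota(p)$ forces $q\le\iota(p)$ for some $p\le d$ — and from the fact that $\iota$ preserves finite joins and meets. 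It then follows that $d\mapsto(\text{ideal generated at level }n\text{ by the down-set of }d)$ is a well-defined lattice embedding of $D$ into $\Ideal(A)$, injective because the maps $D_n\hookrightarrow D_m$ are, with image the finitely generated ideals, and that every closed ideal of $A$ is the supremum of the upward directed family of finitely generated ideals it contains. Hence $\Ideal(A)$ is the ideal completion of $D$, namely $\mathcal O(X)$; since $\Prim(A)$ and $X$ are both sober, they are homeomorphic, and $A$ is unital exactly in the compact case. The remaining points — self-consistency of the choices of edge multiplicities and block sizes, and trivial or empty initial stages — are routine bookkeeping.
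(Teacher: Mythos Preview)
Your proof is correct. Note that the paper does not itself prove this theorem—it is quoted from \cite{BraEll:AF}—though a concrete form of the construction (for compact $X$) appears later as Proposition~\ref{prop:Brattelidiagram}. That version builds the Bratteli diagram directly from a sequence $\cG_1,\cG_2,\ldots$ of refining finite covers of $X$ by compact-open sets, taking the elements of $\cG_n$ as the vertices at level $n$ and drawing an edge from $G\in\cG_n$ to $G'\in\cG_{n+1}$ whenever $G'\subseteq G$. Your construction is a lattice-theoretic abstraction of this: you pass to the distributive lattice $D$ of all compact-opens, exhaust it by finite sublattices $D_n$, and take the join-irreducibles $P_n$ of $D_n$ as vertices via Birkhoff's theorem. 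Both routes yield the same ideal lattice; yours makes the verification that $\Ideal(A)\cong\mathcal{O}(X)$ cleaner (join-irreducibility is precisely what drives your reachability computation), while the cover-based version is more hands-on and is what the paper actually uses in Section~\ref{sec:just-inf-AF} to build its explicit example. One minor imprecision in your write-up: ideals of a Bratteli diagram must satisfy both the forward-closure condition and the saturation condition (if every successor of $v$ lies in $U$ then so does $v$), not just the first. Your key computation in fact shows that the forward-closed sets you produce are automatically saturated—since $v=\bigvee\{q\in P_{m+1}:q\le v\}$ in $D_{m+1}$, all successors of $v$ lying below $d$ forces $v\le d$—so this is a gap in phrasing only, not in substance.
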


\noindent  Recall that an AF-algebra is a \Cs{} which is the completion of an increasing union of finite dimensional sub-\Cs s. 

We end this section by recalling that there is a one-to-one correspondence between open subsets $\cU$ of $\Prim(A)$ and closed two-sided ideals $I(\cU)$ of $A$, given by
\begin{equation} \label{eq:U-I}
I(\cU)  = \bigcap_{J \in \Prim(A) \setminus \cU} J,
\end{equation}
with the convention that $I(\emptyset) = 0$ and $I(\Prim(A)) = A$. Moreover, 
\begin{equation} \label{eq:X-U}
\Prim(A/I(\cU)) = \Prim(A) \setminus \cU,
\end{equation} 
for each open subset $\cU$ of $\Prim(A)$. Consequently, each closed subset of $\Prim(A)$ is the primitive ideal space of a quotient of $A$ (see \cite[Theorem 4.1.3]{Ped:C*-aut}).
Note furthermore that if $J \in \Prim(A)$, then $J=I(\cU)$, where $\cU$ is the complement of the closure of $\{J\}$.

\subsection{Residually finite dimensional \Cs s} \label{sec:RFD}

\noindent This section is devoted to discussing residually finite dimensional \Cs s and their primitive ideal spaces. We also introduce the class of so-called \emph{strictly residually finite dimensional} \Cs s, and describe them in terms of their primitive ideal space. 

A \Cs{} $A$ is said to be \emph{residually finite dimensional} (RFD),  if it admits a separating family of finite dimensional representations. The finite dimensional representations can be taken to be irreducible and pairwise (unitarily) inequivalent. (We note that two irreducible finite dimensional representations are equivalent if and only if they are \emph{weakly equivalent}, i.e., they have the same kernel.)

Assume that $\{\pi_i\}_{i \in \mathbb{I}}$ is a  family of irreducible and pairwise inequivalent finite dimensional representations of a \Cs{} $A$. 
Let $k_i$ be the dimension of the representation $\pi_i$, and identify the image of $\pi_i$ with $M_{k_i}(\C)$. We then get a \sh{}
$$\Psi_{\mathbb{I}} = \bigoplus_{i \in \mathbb{I}} \pi_i \colon A \to \prod_{i \in \mathbb{I}} M_{k_i}(\C).$$
Note that $\Psi_{\mathbb{I}}$ is injective if and only if $\bigcap_{i \in \mathbb{I}} \Ker(\pi_i) = \{0\}$, which again happens if and only if $\{\Ker(\pi_i) : i \in \mathbb{I}\}$ is a dense subset of $\Prim(A)$. Therefore, the following lemma holds; cf.\ Proposition~\ref{prop:singleton}:

\begin{lemma} \label{lm:RFD}
A \Cs{} $A$ is RFD if and only if $\Prim(A)$ contains a dense subset $\cP$ such that $A/I$ is a full matrix algebra, for each $I \in \cP$.
\end{lemma}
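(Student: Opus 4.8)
The plan is to prove both directions by reducing the statement to the observation, already recorded in the excerpt, that a separable \Cs{} $A$ is RFD precisely when the kernels of a separating family of irreducible representations can be taken to be finite-codimensional, together with the fact (Proposition~\ref{prop:singleton}) that a primitive ideal $I$ with $A/I$ finite dimensional automatically has $A/I \cong M_k(\C)$. So the content is really just the topological translation ``separating family of representations'' $\leftrightarrow$ ``dense set of kernels in $\Prim(A)$'', which is built into the hull--kernel topology.

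For the forward direction, suppose $A$ is RFD. Choose a separating family $\{\pi_i\}_{i \in \I}$ of irreducible, pairwise inequivalent, finite dimensional representations, as in the paragraph preceding the lemma, and set $\cP = \{\Ker(\pi_i) : i \in \I\} \subseteq \Prim(A)$. Since each $\pi_i$ is irreducible with finite dimensional image, the quotient $A/\Ker(\pi_i)$ is a finite dimensional primitive \Cs, hence by Remark~\ref{ex:finite-dim} (or Proposition~\ref{prop:singleton}) isomorphic to a full matrix algebra $M_{k_i}(\C)$. It remains to see that $\cP$ is dense. By definition of the hull--kernel topology, the closure of $\cP$ consists of all $J \in \Prim(A)$ containing $\bigcap_{I \in \cP} I = \bigcap_{i \in \I} \Ker(\pi_i)$; since the family is separating, this intersection is $\{0\}$, so every $J \in \Prim(A)$ contains it, and therefore $\overline{\cP} = \Prim(A)$.

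For the converse, suppose $\Prim(A)$ contains a dense subset $\cP$ such that $A/I$ is a full matrix algebra for each $I \in \cP$. For each $I \in \cP$ pick an irreducible representation $\pi_I$ of $A$ with $\Ker(\pi_I) = I$ (possible since $I$ is primitive); then $\pi_I$ is finite dimensional because $A/I$ is. Density of $\cP$ gives, exactly as above, $\bigcap_{I \in \cP} I = \bigcap_{I \in \cP} \Ker(\pi_I) = \{0\}$: indeed any nonzero element would have to lie in every primitive ideal containing $\bigcap_{I\in\cP} I$, i.e.\ in every point of $\overline{\cP} = \Prim(A)$, forcing it into all of $\Prim(A)$ and hence (as the intersection of all primitive ideals of a \Cs{} is zero) to be zero. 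Thus $\{\pi_I\}_{I \in \cP}$ is a separating family of finite dimensional representations, so $A$ is RFD.

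I do not expect a genuine obstacle here; the only points requiring a little care are (i) making sure the passage from ``the family is separating'' to ``$\bigcap \Ker(\pi_i) = 0$'' and back is stated at the level of the closure operator in $\Prim(A)$ rather than invoking anything extra, and (ii) noting that ``finite dimensional primitive \Cs'' already forces ``full matrix algebra'', so that the hypothesis and conclusion phrasings match up without loss. Both are handled by the remarks and propositions already in the excerpt, so the proof is essentially a one-paragraph unwinding of definitions, and indeed the discussion immediately preceding the lemma has already done most of the work.
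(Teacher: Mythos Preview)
Your proof is correct and follows essentially the same approach as the paper, which simply notes (in the paragraph immediately preceding the lemma) that $\Psi_{\mathbb{I}}$ is injective iff $\bigcap_i \Ker(\pi_i)=\{0\}$ iff $\{\Ker(\pi_i)\}$ is dense in $\Prim(A)$, and then cites Proposition~\ref{prop:singleton}. You have written out the two directions in more detail, but the underlying argument is identical; the only minor slip is the word ``separable'' in your opening plan, which is neither in the hypothesis nor used in your proof.
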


\noindent
Observe that $\{I\}$ is closed in $\Prim(A)$, for each $I \in \cP$, by Proposition~\ref{prop:Prim1}~(ii). If $A$ is separable, then one can choose the set $\cP$ in the lemma above to be countable.

Since the ideals $\Ker(\pi_i)$ are maximal and pairwise distinct (by the assumed inequivalence of the finite dimensional representations $\pi_i$, which implies that they are also weakly inequivalent), it follows from the Chinese Remainder Theorem that the map
\begin{equation*} 
\Psi_F = \bigoplus_{i \in F} \pi_i \colon A \to \prod_{i \in F} M_{k_i}(\C)
\end{equation*}
is surjective, for each finite subset $F$ of $\mathbb{I}$. 

\begin{definition} \label{def:SRFD} A unital \Cs{} $A$ is said to be \emph{strictly residually finite dimensional} (\emph{strictly RFD}) if there exists an infinite family $\{\pi_i \colon A \to M_{k_i}(\C)\}_{i \in \mathbb{I}}$ of irreducible, pairwise inequivalent, finite dimensional representations of $A$ such that the map
\begin{equation} \label{eq:Psi_T}
\Psi_T = \bigoplus_{i \in T} \pi_i \colon A \to \prod_{i \in T} M_{k_i}(\C)
\end{equation}
is injective, for each infinite subset $T$ of $\mathbb{I}$. 
\end{definition}

\noindent The following  characterizes strictly RFD \Cs s in terms of their primitive ideal space:

\begin{proposition} \label{prop:SRFD}
A unital separable \Cs{} $A$ is strictly RFD if and only if there exists an infinite subset $\cP$ of $\Prim(A)$ such that each of its infinite subsets is dense in $\Prim(A)$, and such that $A/I$ is finite dimensional, for each $I \in \cP$.
\end{proposition}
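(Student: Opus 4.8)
The plan is to convert Definition~\ref{def:SRFD} into a statement about $\Prim(A)$ by means of the following dictionary. By Proposition~\ref{prop:singleton}, whenever $I \in \Prim(A)$ has $A/I$ finite dimensional one has $A/I \cong M_k(\C)$ for some $k$, so the composition of the quotient map $A \to A/I$ with such an isomorphism is an irreducible finite dimensional representation of $A$ with kernel $I$; conversely, every irreducible finite dimensional representation of $A$ arises in exactly this way from its kernel. Since, for irreducible finite dimensional representations, (unitary) equivalence coincides with weak equivalence, the assignment $\pi \mapsto \Ker(\pi)$ descends to a bijection between equivalence classes of irreducible finite dimensional representations of $A$ and primitive ideals $I$ with $A/I$ finite dimensional; under this bijection a family being pairwise inequivalent corresponds to the associated set of kernels consisting of pairwise distinct ideals. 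The second ingredient is the observation recorded just before Lemma~\ref{lm:RFD}: for any family $\{\pi_i\}_{i \in S}$ of irreducible representations, $\bigoplus_{i \in S} \pi_i$ is injective if and only if $\bigcap_{i \in S} \Ker(\pi_i) = 0$, equivalently $\{\Ker(\pi_i) : i \in S\}$ is dense in $\Prim(A)$.

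\textbf{The two implications.} For necessity, starting from a witnessing family $\{\pi_i \colon A \to M_{k_i}(\C)\}_{i \in \mathbb{I}}$ as in Definition~\ref{def:SRFD}, I would put $\cP = \{\Ker(\pi_i) : i \in \mathbb{I}\}$. Pairwise inequivalence makes $i \mapsto \Ker(\pi_i)$ a bijection onto $\cP$, so $\cP$ is infinite and its infinite subsets are precisely the sets $\{\Ker(\pi_i) : i \in T\}$ with $T \subseteq \mathbb{I}$ infinite; for such $T$, injectivity of $\Psi_T$ yields density of $\{\Ker(\pi_i) : i \in T\}$ in $\Prim(A)$ by the second ingredient, while $A/\Ker(\pi_i) \cong M_{k_i}(\C)$ is finite dimensional. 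Conversely, given $\cP \subseteq \Prim(A)$ infinite with each infinite subset dense in $\Prim(A)$ and $A/I$ finite dimensional for all $I \in \cP$, I would attach to each $I \in \cP$ the irreducible finite dimensional representation $\pi_I \colon A \to A/I \cong M_{k_I}(\C)$ furnished by the dictionary, take $\mathbb{I} = \cP$, note that distinct ideals give inequivalent representations, and observe that for every infinite $T \subseteq \mathbb{I} = \cP$ the density of $T$ in $\Prim(A)$ forces $\bigcap_{I \in T} I = 0$, hence $\Psi_T = \bigoplus_{I \in T} \pi_I$ is injective; since $\cP$ is infinite, this realizes $A$ as strictly RFD.

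\textbf{Main obstacle.} I do not anticipate a genuine difficulty: both implications are unwindings of the definitions once the dictionary and the density criterion are available. The only point demanding a little care — and the closest thing to a hard part — is checking that $\pi \mapsto \Ker(\pi)$ really is a bijection on equivalence classes, so that ``pairwise inequivalent family of irreducible finite dimensional representations'' translates exactly to ``set of pairwise distinct primitive ideals with finite dimensional quotient'', with nothing lost or gained; this is precisely where one invokes the coincidence of weak equivalence and equivalence for irreducible finite dimensional representations, together with Proposition~\ref{prop:singleton}. Unitality and separability of $A$ enter only insofar as they are needed for the cited results.
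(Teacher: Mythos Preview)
Your proposal is correct and follows essentially the same route as the paper's own proof: in both directions one passes between a witnessing family $\{\pi_i\}$ and the set $\cP = \{\Ker(\pi_i)\}$ of kernels, using that $\Psi_T$ is injective iff $\{\Ker(\pi_i) : i \in T\}$ is dense in $\Prim(A)$. Your treatment is in fact slightly more explicit than the paper's in spelling out why pairwise inequivalence makes $i \mapsto \Ker(\pi_i)$ a bijection (so that infinite subsets of $\cP$ correspond exactly to infinite $T \subseteq \mathbb{I}$), which is indeed the only point requiring care.
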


\noindent Note that if such a subset $\cP$ of $\Prim(A)$ exists, then each infinite subset of $\cP$ has the same properties,  and hence one can take $\cP$ to be countably infinite. 

\begin{proof} Suppose first that $A$ is a strictly RFD unital separable \Cs{}  witnessed by  an infinite family $\{\pi_i \colon A \to M_{k_i}(\C)\}_{i \in \mathbb{I}}$ of irreducible, pairwise inequivalent, finite dimensional representations. Set
$$\cP = \{\Ker(\pi_i) : i \in \mathbb{I}\} \subseteq \Prim(A).$$
If $I = \Ker(\pi_i ) \in \cP$, then $A/I \cong \pi_i(A)$ is finite dimensional. 
Let $T$ be an infinite subset of $\mathbb{I}$, then 
$0=\Ker(\Psi_T) = \bigcap_{i \in T} \Ker(\pi_i)$.
Therefore $\{\Ker(\pi_i) : i \in T\}$ is dense in $\Prim(A)$. 

Suppose conversely that $A$ is a unital separable \Cs{} for which there exists an infinite subset $\cP = \{I_i\}_{i \in \mathbb{I}}$ of $\Prim(A)$ satisfying the hypotheses. For each $i \in \mathbb{I}$, find an irreducible representation $\pi_i \colon A \to B(H_i)$ with $\Ker(\pi_i) = I_i$. Then $\pi_i(A) \cong A/I_i$ is finite dimensional, so $H_i$ is finite dimensional and $\pi_i(A) = B(H_i) \cong M_{k_i}(\C)$, where $k_i = \dim(H_i)$. Let $T$ be an infinite subset of $\mathbb{I}$. The associated map  $\Psi_T$ then satisfies 
$$\Ker(\Psi_T) = \bigcap_{i \in T} \Ker(\pi_i) = \bigcap_{i \in T} I_i = 0,$$
by the assumption that $\{I_i \}_{i \in T}$ is dense in $\Prim(A)$. This shows that $A$ is strictly RFD.
\end{proof}

\section{Just-infinite \Cs s: A classification result} \label{sec:classification}

\noindent By analogy with the notion of just-infiniteness in the category of groups and of abstract algebras, see \cite{McCarthy:ji}, we define a \Cs{} to be just-infinite as follows:

\begin{definition} A \Cs{} $A$ is said to be \emph{just-infinite} if it is infinite dimensional, and for each non-zero closed two-sided ideal $I$ in $A$, the quotient $A/I$ is finite dimensional.
\end{definition}

\begin{lemma} \label{lm:prime}
Every just-infinite \Cs{} is prime.
\end{lemma}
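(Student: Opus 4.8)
The plan is to show that a just-infinite \Cs{} $A$ has no pair of non-zero closed two-sided ideals with zero intersection, which is precisely primeness. Suppose, for contradiction, that $I$ and $J$ are non-zero closed two-sided ideals in $A$ with $I \cap J = 0$. Since $A$ is just-infinite and $I \neq 0$, the quotient $A/I$ is finite dimensional; likewise $A/J$ is finite dimensional.

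The key observation is that the natural \sh{} $A \to A/I \oplus A/J$ given by $a \mapsto (a+I, a+J)$ has kernel $I \cap J = 0$, hence is injective. Therefore $A$ embeds into the finite dimensional \Cs{} $A/I \oplus A/J$, which forces $A$ itself to be finite dimensional. This contradicts the hypothesis that $A$ is just-infinite (in particular, infinite dimensional). Hence no such pair $I, J$ exists, and $A$ is prime.

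There is essentially no obstacle here: the argument is a direct consequence of the definition of just-infiniteness together with the elementary fact that a sub-\Cs{} of a finite dimensional \Cs{} is finite dimensional. One should perhaps note the trivial case separately, namely that $A$ being infinite dimensional rules out $A$ itself being the zero algebra, so the definition of primeness applies non-vacuously; and that if one of $I$ or $J$ were allowed to be zero, primeness would hold trivially, which is why the contradiction is set up with both ideals non-zero.
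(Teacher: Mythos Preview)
Your proof is correct and is essentially identical to the paper's own argument: both consider the natural map $A \to A/I \oplus A/J$, note that its image is finite dimensional (since $A$ is just-infinite and $I,J$ are non-zero), and use that its kernel is $I \cap J$ to conclude $I \cap J \ne 0$. The only cosmetic difference is that the paper phrases it directly while you phrase it by contradiction.
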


\begin{proof} Let $A$ be a  just-infinite \Cs, and let $I$ and $J$ be two non-zero closed two-sided ideals in $A$. Consider the natural homomorphism $\pi \colon A \to A/I \oplus A/J$.
By the assumption that $A$ is just-infinite, the image is finite dimensional. It follows that $\pi$ cannot be injective, so $I \cap J = \Ker(\pi) \ne 0$.
\end{proof}

\begin{example} \label{ex:ji-abelian}
The group $\Z$ is just-infinite, and it is the only abelian just-infinite group. It is also known, see \cite[Proposition 3(a)]{Gr00}, that every  finitely generated infinite group has a just-infinite quotient. 

The corresponding statements for \Cs s are false:
No commutative \Cs{} is just-infinite, since no commuative \Cs{} other than $\C$ is prime.  This also shows that no commutative \Cs{} has a just-infinite quotient. 

It is  well-known that every \emph{unital} \Cs{} has a maximal proper closed two-sided ideal, and hence a quotient which is simple. If, moreover, such a simple quotient is infinite dimensional, then it is just-infinite. There seems to be no satisfactory description of unital \Cs s having an infinite dimensional simple quotient. 
\end{example}

\begin{lemma} \label{lm:essential} Each non-zero closed two-sided ideal in a just-infinite \Cs{} is essential and infinite dimensional.
\end{lemma}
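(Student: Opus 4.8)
The plan is to derive both assertions from the fact, established in Lemma~\ref{lm:prime}, that every just-infinite \Cs{} is prime. So let $A$ be just-infinite and let $I$ be a non-zero closed two-sided ideal in $A$.

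\emph{Essentiality.} This is immediate: if $J$ is any non-zero closed two-sided ideal in $A$, then primeness of $A$ applied to the pair $I,J$, together with $I\ne 0$, forces $I\cap J\ne 0$. Hence $I$ meets every non-zero closed two-sided ideal of $A$, i.e., $I$ is essential.

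\emph{Infinite-dimensionality.} Here I would argue by contradiction: suppose $I$ is finite dimensional. By Remark~\ref{ex:finite-dim}, $I$ is then a direct sum of matrix algebras; in particular it is unital, and I let $p\in I\subseteq A$ denote its unit, a projection. The key point is that $p$ is central in $A$: for $a\in A$ we have $pa\in I$ and $ap\in I$ because $I$ is an ideal, and since $p$ acts as the identity on $I$ this yields $pa=p(pa)=pap$ and $ap=(ap)p=pap$, whence $pa=ap$. Consequently $J:=\{a\in A:pa=0\}$ is a closed two-sided ideal of $A$ — closed as the kernel of the bounded map $a\mapsto pa$, and two-sided precisely because $p$ is central — and $I\cap J=0$, since every $x\in I$ satisfies $x=px$. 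As $A$ is prime and $I\ne 0$, we conclude that $J=0$. But then every $a\in A$ satisfies $a-pa\in J=0$, i.e., $a=pa\in I$, so $A=I$ is finite dimensional, contradicting the standing assumption that $A$, being just-infinite, is infinite dimensional. Therefore $I$ must be infinite dimensional.

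The argument is short, and I do not anticipate a genuine obstacle; the only point that needs a little care is checking that the unit $p$ of the finite-dimensional ideal $I$ is a \emph{central} projection of $A$, since this is what makes $J$ an honest two-sided ideal and allows primeness to be invoked. Equivalently, one could phrase the second part as: a non-zero finite-dimensional ideal would split off as a direct summand $A\cong I\oplus J$, and a prime \Cs{} admits no such non-trivial direct sum decomposition.
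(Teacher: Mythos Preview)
Your proof is correct and follows essentially the same approach as the paper: both derive essentiality directly from primeness (Lemma~\ref{lm:prime}), and for infinite-dimensionality both observe that a finite-dimensional ideal has a unit which is central in $A$, yielding a complementary ideal $J$ (your $\{a:pa=0\}$ is the paper's $A(1-e)$) that must vanish by primeness/essentiality, forcing $A=I$. The only cosmetic difference is that the paper invokes the already-established essentiality of $I$ to kill $A(1-e)$, whereas you appeal to primeness directly; these are equivalent.
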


\begin{proof} It is easy to see that a \Cs{} is prime if and only if each non-zero closed two-sided ideal is essential, so the first statement of the lemma follows from Lemma~\ref{lm:prime}. 

If a closed two-sided ideal $I$ in a \Cs{} $A$ has a unit $e$, then $e$ is a central projection in $A$ and $I = Ae$. Thus $Ae$ and $A(1-e)$ are closed two-sided ideals in $A$ with zero intersection. So if $I$ is essential, then $A(1-e) = 0$ and $I=A$. Now, as each finite dimensional \Cs{} has a unit, we see that no non-zero closed two-sided ideal in a just-infinite \Cs{} can be finite dimensional.
\end{proof}

\noindent The class of just-infinite \Cs s does not have good permanence properties. In fact, almost all natural operations on \Cs s (such as passing to sub-\Cs s, extensions, passing to ideals and quotients, taking inductive limits, Morita equivalence, forming crossed products by suitable groups) fail to be consistent with the class of just-infinite \Cs s. However, the following permanence-type properties of just-infinite \Cs s do hold:

\begin{proposition} \label{permanence} \mbox{}
\begin{enumerate} 
\item If $B$ is an infinite dimensional hereditary\footnote{A sub-\Cs{} $B$ of a \Cs{} $A$ is \emph{hereditary} if whenever $b \in B$ and $a \in A$ are such that $0 \le a \le b$, then $a \in B$.} sub-\Cs{} of a just-infinite \Cs{} $A$, then $B$ is just-infinite. In particular, each non-zero closed two-sided ideal in a just-infinite \Cs{} is again just-infinite. 
\item Let $0 \to I \to A \to Q \to 0$ be a short exact sequence of \Cs s, where $I$ is an essential ideal in $A$. Then $A$ is just-infinite if and only if $Q$ is finite dimensional and $I$ is just-infinite.
\end{enumerate}
\end{proposition}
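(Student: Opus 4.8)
The plan is to prove (i) first, then extract the ``in particular'' clause as a special case, and finally derive (ii) from (i) together with the essentiality of $I$. The one substantial ingredient will be the classical lattice isomorphism between the closed two-sided ideals of a hereditary sub-\Cs{} $B$ of $A$ and those of the ideal $\overline{ABA}$ it generates; everything else is formal manipulation of ideals and quotients, using only Lemmas~\ref{lm:prime} and~\ref{lm:essential} and the elementary fact that an extension of a finite dimensional \Cs{} by a finite dimensional \Cs{} is again finite dimensional.

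To prove (i), let $B$ be an infinite dimensional hereditary sub-\Cs{} of the just-infinite \Cs{} $A$, and let $J$ be a non-zero closed two-sided ideal of $B$. I would put $K:=\overline{AJA}$, the closed two-sided ideal of $A$ generated by $J$. Since $J\subseteq K$, we have $K\ne 0$, so $A/K$ is finite dimensional because $A$ is just-infinite. By the ideal correspondence for hereditary sub-\Cs s, which sends $J$ to $\overline{AJA}$ with inverse $K\mapsto K\cap B$, we have $K\cap B=J$; hence the quotient map $A\to A/K$ restricts on $B$ to a \sh{} with kernel exactly $J$, so $B/J$ is isomorphic to a sub-\Cs{} of the finite dimensional algebra $A/K$ and is thus finite dimensional. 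As $B$ is infinite dimensional, $B$ is just-infinite. The final assertion of (i) is then immediate: a non-zero closed two-sided ideal of $A$ is a hereditary sub-\Cs{} of $A$, and it is infinite dimensional by Lemma~\ref{lm:essential}.

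To prove (ii), suppose first that $A$ is just-infinite. Then $A$ is infinite dimensional, hence non-zero, so $I\ne 0$ because $I$ is essential; therefore $Q=A/I$ is finite dimensional by definition of just-infiniteness, and $I$ is just-infinite by (i). Conversely, suppose $Q$ is finite dimensional and $I$ is just-infinite. Then $A$ contains the infinite dimensional \Cs{} $I$, so $A$ is infinite dimensional. Let $K$ be a non-zero closed two-sided ideal of $A$. Since $I$ is essential, $K\cap I\ne 0$, and $K\cap I$ is a closed two-sided ideal of the just-infinite \Cs{} $I$, so $(I+K)/K\cong I/(K\cap I)$ is finite dimensional. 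The quotient $A/(I+K)$ is a quotient of $Q$ and hence also finite dimensional, so $A/K$, being an extension of $A/(I+K)$ by $(I+K)/K$, is finite dimensional. Thus $A$ is just-infinite.

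I do not anticipate a real obstacle: the only step beyond routine bookkeeping is the identity $\overline{AJA}\cap B=J$ in (i), and this is standard. Should one wish to avoid quoting the full hereditary-subalgebra ideal correspondence, it suffices — in order to carry out everything used in (ii) — to prove only the ``in particular'' special case directly: if $J$ is a non-zero closed two-sided ideal of a non-zero closed two-sided ideal $I$ of $A$, then $J$ is a closed two-sided ideal of $A$ (a one-line check using an approximate unit of $J$), whence $A/J$ is finite dimensional and so is $I/J$, which embeds in $A/J$; combined with Lemma~\ref{lm:essential}, this gives that $I$ is just-infinite.
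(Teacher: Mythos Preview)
Your proof is correct and follows essentially the same route as the paper: for (i) you both pass to the ideal of $A$ generated by $J$ and use $J=\overline{AJA}\cap B$ to embed $B/J$ in a finite-dimensional quotient of $A$, and for (ii) you both argue via the short exact sequence $0\to I/(I\cap K)\to A/K\to Q/\pi(K)\to 0$ (written in your notation as $(I+K)/K$ and $A/(I+K)$). The only cosmetic difference is your added final paragraph indicating how to bypass the hereditary ideal correspondence for the special case of ideals.
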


\begin{proof} (i). Let $J$ be a non-zero closed two-sided ideal in $B$, and let $I$ be the (necessarily non-zero) closed two-sided ideal in $A$ generated by $J$. Then $J = B \cap I$, so $B/J$ is isomorphic to a (hereditary) sub-\Cs{} of $A/I$. The latter is finite dimensional, so $B/J$ must also be finite dimensional. 

The second part of (i) follows from the fact that each closed two-sided ideal in a \Cs{} is a hereditary sub-\Cs, together with Lemma~\ref{lm:essential}, which ensures that each non-zero closed two-sided ideal in $A$ must be infinite dimensional. 

(ii). Suppose that $I$ is just-infinite and $Q$ is finite dimensional. Let $J$ be a non-zero ideal in $A$. Then we have a short exact sequence $0 \to I/(I \cap J) \to A/J \to Q/\pi(J) \to 0$, where $\pi \colon A \to Q$ is the quotient mapping. Now, $I \cap J$ is non-zero (since $I$ is an essential ideal), so $I/(I \cap J)$ is finite dimensional. This implies that $A/J$ is finite dimensional, being an extension of two finite dimensional \Cs s.

Conversely, if $A$ is just-infinite, then $Q$, which is isomorphic to the quotient $A/I$, is finite dimensional (because $I$ is non-zero). The ideal $I$ cannot be finite dimensional (since otherwise $A$ would be finite dimensional), so it follows from (i) that $I$ is just-infinite.
\end{proof}

\noindent The observation made above that the class of just-infinite \Cs s is not closed under Morita equivalence,  can be justified as follows. If $A$ is a just-infinite \Cs{} and if $\cK$ denotes the \Cs{} of compact operators on a separable Hilbert space, then $A \otimes \cK$ is just-infinite if and only if $A$ is simple (since all proper non-zero quotients of $A \otimes \cK$ are stable, and therefore infinite dimensional). 

\begin{remark}[Hereditary just-infiniteness]  If $G$ is a residually finite group and all its normal subgroups of finite index, including $G$ itself, are just-infinite, then $G$ is said to be \emph{hereditarily just-infinite}. Just-infinite branch groups are residually finite, but not hereditarily just-infinite; cf.\ \cite[Section 6]{Gr00}, so a finite index normal subgroup of a just-infinite group need not be just-infinite. We shall say more about hereditarily just-infinite groups and just-infinite branch groups in Examples~\ref{ex:PSLnZ} and \ref{ex:Dinfty} and in Theorem~\ref{thm:no-branch}.

It follows from Proposition~\ref{permanence} above that just-infinite \Cs s automatically have a property analogous to being hereditarily just-infinite for groups: Any non-zero closed two-sided ideal in a just-infinite \Cs{} is itself just-infinite. Note also that the following three conditions for a closed two-sided ideal $I$ in a just-infinite \Cs{} $A$ are equivalent (cf.\ Lemma~\ref{lm:essential} and the definition of being just-infinite): $I$ is non-zero, $I$ is infinite dimensional, and $I$  has finite co-dimension in $A$.
\end{remark}

\noindent We proceed to describe the primitive ideal space of a just-infinite \Cs. They turn out to be homeomorphic to one of the T$_0$-spaces in the following class:

\begin{example} \label{ex:Y_n}
For each  $n \in \{0,1,2, \dots, \infty\}$, consider the T$_0$-space $Y_n$ defined to be the disjoint union $Y_n = \{0\} \cup Y_n'$, where $Y_n'$ is a set with $n$ elements, if $n$ is finite,  and $Y'_n$ has  countably infinitely many elements, if $n = \infty$.
Equip $Y_n$ with the topology  for which the closed subsets of $Y_n$ are precisely the following sets: $\emptyset$, $Y_n$, and all finite subsets of $Y_n'$. 

We shall usually take $Y_n'$ to be  $\{1,2, \dots, n\}$, if $1 \le n < \infty$, and $Y_\infty'$ to be  $\N$.  
\end{example}

\noindent The spaces $Y_n$ have the following axiomatic properties:

\begin{lemma} \label{lm:Y_n}
A (non-empty) second countable T$_0$-space $X$ is homeomorphic to $Y_n$, for some $n \in \{0,1,2, \dots, \infty\}$,  if and only if it the following conditions hold, for some point $x_0 \in X$:
\begin{enumerate}
\item[\rm{(A)}] $\{x_0\}$ is dense in $X$,
\item[\rm{(B)}] $\{x\}$ is closed, for all $x \in  X \setminus \{x_0\}$,
\item[\rm{(C)}] each infinite subset of $X$ is dense in $X$.
\end{enumerate}
Moreover, if $X$ is any T$_0$-space satisfying conditions (A), (B) and (C) above, then 
\begin{enumerate}
\item the closed subsets of $X$ are the following sets: $\emptyset$, $X$, and all finite subsets of $X \setminus \{x_0\}$,
\item $X$ is second countable if and only if $X$ is countable,
\item each subset of $X$ is compact (in particular, $X$ is totally disconnected),
\item  $X$ is a spectral space.\footnote{See definition above Proposition~\ref{prop:Prim1}}
\end{enumerate}
\end{lemma}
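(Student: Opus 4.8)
The plan is to prove item (a) first, since it pins down completely the topology of any $X$ satisfying (A), (B), (C); items (b)--(d) and the homeomorphism characterization then follow easily. For the characterization, the ``only if'' implication is immediate: (A), (B) and (C) are topological invariants, so it suffices to check them for $Y_n$ with $x_0 = 0$, reading them off the list of closed sets in Example~\ref{ex:Y_n}. Indeed $\{0\}$ is not on that list, so $\overline{\{0\}} = Y_n$ and (A) holds; each singleton contained in $Y_n'$ is on the list, so (B) holds; and an infinite subset of $Y_n$ is contained in no finite subset of $Y_n'$, hence has closure $Y_n$, which gives (C) (vacuously so when $n < \infty$). For the ``if'' implication, assume $X$ is a non-empty second countable T$_0$-space with a point $x_0$ satisfying (A), (B), (C); then $X$ is countable by (b) below, so, setting $n := |X \setminus \{x_0\}| \in \{0,1,2,\dots,\infty\}$, any bijection $X \to Y_n$ that sends $x_0$ to $0$ matches up the closed sets of the two spaces by (a) (applied to both), and is therefore a homeomorphism.

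For (a): the listed sets are closed --- $\emptyset$ and $X$ trivially, and each finite subset of $X \setminus \{x_0\}$ by (B) and closure under finite unions --- so it remains to see that a proper closed subset $C \subsetneq X$ is a finite subset of $X \setminus \{x_0\}$. Since $C \neq X$ we have $x_0 \notin C$, for otherwise $X = \overline{\{x_0\}} \subseteq C$ by (A); and $C$ cannot be infinite, since an infinite closed set equals its closure, which is $X$ by (C). For (c): given $S \subseteq X$ and an open cover of $S$, pick $s \in S$ and a member $U$ of the cover with $s \in U$; by (a), $U = X \setminus F$ for some finite $F \subseteq X \setminus \{x_0\}$ (or $U = X$), so $S \setminus U \subseteq F$ is finite and can be covered by finitely many further members of the cover. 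Hence every subset of $X$ is compact; in particular every open set is compact, and since the open sets trivially form a basis, $X$ is totally disconnected. For (d): let $F$ be a non-empty closed prime subset of $X$ (``prime'' being understood to exclude $\emptyset$, in keeping with Proposition~\ref{prop:Prim1}(iii)); by (a), either $F = X = \overline{\{x_0\}}$, or $F$ is a non-empty finite subset of $X \setminus \{x_0\}$. In the latter case $F$ must be a singleton, for otherwise it splits as the union of two non-empty proper subsets, both closed by (B), contradicting primeness; and then $F = \{x\} = \overline{\{x\}}$. Thus every non-empty closed prime subset is the closure of a point, so $X$ is spectral.

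The only step requiring real care, and the one I expect to be the main obstacle, is (b). If $X$ is countable, then by (a) the whole topology consists of $\emptyset$ together with the sets $X \setminus F$ for $F$ ranging over the countably many finite subsets of $X \setminus \{x_0\}$, so the topology is itself a countable basis. Conversely, suppose $X$ is second countable, with countable basis $\mathcal{B}$, but uncountable. By (a) each non-empty $B \in \mathcal{B}$ has the form $X \setminus F_B$ with $F_B \subseteq X \setminus \{x_0\}$ finite, so $S := \bigcup_{B \in \mathcal{B}} F_B$ is a countable subset of $X \setminus \{x_0\}$. As $X \setminus \{x_0\}$ is uncountable, fix $a \in (X \setminus \{x_0\}) \setminus S$; then $a$ lies in every non-empty member of $\mathcal{B}$. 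But $X \setminus \{a\}$ is an open neighbourhood of $x_0$ (open by (a) and (B)), and no non-empty basic open set is contained in it --- a contradiction. This argument, essentially the proof that the cofinite topology on an uncountable set fails to be second countable, together with the bookkeeping around the distinguished point $x_0$, are the only non-routine elements; the rest is formal.
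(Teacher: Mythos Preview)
Your proof is correct and follows essentially the same approach as the paper: establish the description (a) of the closed sets first, then deduce (b)--(d) and the homeomorphism characterization from it. The only cosmetic difference is in (b), where the paper argues directly that $X' \subseteq \bigcup_n (X \setminus U_n)$ for a countable basis $\{U_n\}$, while you phrase the same idea as a contradiction by producing a point $a$ lying in every nonempty basic open set; the underlying content is identical.
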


\begin{proof} The spaces $Y_n$ satisfy conditions (A), (B), and (C) with $x_0 = 0$. We show below that (A), (B) and (C) imply (i)--(iv). Any second countable T$_0$-space $X$ satisfying (i) and (ii) is homeomorphic to $Y_n$, where $n$ is the cardinality of $X' = X \setminus \{x_0\}$. Indeed, $X$ is countable by (ii), and  any bijection $f \colon X \to Y_n$, with $f(x_0) = 0$, is  a homeomorphism by (i).

Let now $X$ be a T$_0$-space satisfying (A), (B) and (C). We show that (i), (ii), (iii) and (iv) hold. Set $X' = X \setminus \{x_0\}$. 
It follows from (B) that each finite subset of $X'$ is closed, and so are $\emptyset$ and $X$. Conversely, if $F$ is a closed subset of $X$ and if $F \ne X$, then $F$ must be a finite subset of $X'$ by (A) and (C). Hence (i) holds. 

Suppose now that $X$ is second countable and $|X| > 1$. Let $\{U_n\}_{n=1}^\infty$ be a basis for the topology on $X$ consisting of non-empty open sets. For $n\ge 1$, set $F_n = X \setminus U_n$, and observe that $F_n$ is finite (or empty) by (i). Let $x \in X'$. Then $X \setminus \{x\}$ is open by (i), so $U_n \subseteq X \setminus \{x\}$ for some $n$, whence $x \in F_n$. Thus $X'$ is contained in the countable set  $\bigcup_{n=1}^\infty F_n$,  so (ii) holds. 

Let $K$ be an arbitrary subset of $X$ and let $\{W_i\}_{i \in I}$ be an open cover of $K$. Take any $i_0 \in I$ such that $W_{i_0}$ is non-empty. Then the set $F= X \setminus W_{i_0}$ is finite. Hence $F \cap K$ is covered by finitely many open sets from the collection $\{W_i\}_{i \in I}$ which, together with $W_{i_0}$, gives a finite open cover of $K$. This proves (iii). 

Finally, let $F \ne \emptyset$ be a closed subset of $X$ which is prime. If $F= X$, then $F$ is the closure of $\{x_0\}$. If $F \ne X$, then $F$ is a finite subset of $X'$, by (i). Write $F = \bigcup_{x \in F} \{x\}$, and note that each singleton $\{x\}$, $x \in F$, is closed. Hence $F$ can have at most one element, so it is in particular  the closure of a singleton. This proves that $X$ is a spectral space. 
\end{proof}

\begin{lemma} \label{lm:Prim-ji} Let $A$ be a separable \Cs. The following hold:
\begin{enumerate}
\item $\Prim(A)$ is homeomorphic to $Y_n$, for some $n \in \{0,1,2, \dots, \infty\}$, if and only if  the following three conditions hold:
\begin{enumerate}
\item $A$ is primitive,
\item $A/I$ is simple, for each non-zero primitive ideal $I$ in $A$,
\item if $\Prim(A)$ is infinite, then $\bigcap_{I \in \cP} I = 0$, for each infinite subset $\cP$ of $\Prim(A)$. 
\end{enumerate}
\item If $\Prim(A)$ is infinite and $A$  satisfies {\rm{(b)}} and {\rm{(c)}}, then it automatically satisfies {\rm{(a)}}. If $A/I$ is finite dimensional, for each non-zero $I \in \Prim(A)$, then condition {\rm{(b)}} holds.
\item If $A$ is just-infinite, then $\Prim(A) = Y_n$, for some $n \in \{0,1,2, \dots, \infty\}$. 
\end{enumerate}
\end{lemma}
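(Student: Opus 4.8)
The plan is to derive everything from the axiomatic description of the spaces $Y_n$ in Lemma~\ref{lm:Y_n}, translating statements about ideals of $A$ into statements about the topology of $\Prim(A)$ via the correspondence between open subsets of $\Prim(A)$ and closed two-sided ideals of $A$ recalled in \eqref{eq:U-I}--\eqref{eq:X-U}, together with the basic fact that $\bigcap_{P \in \Prim(A)} P = 0$ for every \Cs{} $A$. Since $A$ is separable, $\Prim(A)$ is a non-empty, second countable, spectral $T_0$-space by Proposition~\ref{prop:Prim1}, so Lemma~\ref{lm:Y_n} applies: $\Prim(A) \cong Y_n$ for some $n$ if and only if $\Prim(A)$ satisfies (A), (B) and (C) for some point $x_0$.

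For part (i) I would match the conditions pairwise, with $x_0 = 0$. First, (a) $\Leftrightarrow$ (A): if $A$ is primitive then $0 \in \Prim(A)$ and, as $0$ lies in every primitive ideal, $\overline{\{0\}} = \Prim(A)$; conversely, if $\{x_0\}$ is dense then the ideal $x_0$ lies in every primitive ideal, so $x_0 \subseteq \bigcap_P P = 0$, whence $0 = x_0 \in \Prim(A)$. Second, (b) $\Leftrightarrow$ (B): the points of $\Prim(A) \setminus \{0\}$ are exactly the non-zero primitive ideals, and by Proposition~\ref{prop:Prim1}(ii) such an ideal $I$ is a closed point precisely when $A/I$ is simple. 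Third, (c) $\Leftrightarrow$ (C): for any infinite $\cP \subseteq \Prim(A)$ one has $\overline{\cP} = \{Q \in \Prim(A) : Q \supseteq \bigcap_{I \in \cP} I\}$, which by semisimplicity of $A$ equals $\Prim(A)$ if and only if $\bigcap_{I \in \cP} I = 0$; thus (c) and (C) both say that $\Prim(A)$ is finite or has all its infinite subsets dense. (In the forward direction of (i) condition (a) is already available, since $\Prim(A) \cong Y_n$ forces $A$ primitive by the first equivalence, so using $x_0 = 0$ is legitimate.) Combining the three equivalences with Lemma~\ref{lm:Y_n} gives part (i).

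For the second assertion of part (ii): if $I \in \Prim(A)$ is non-zero with $A/I$ finite dimensional, then $A/I$ is a primitive finite dimensional \Cs, hence a full matrix algebra by Remark~\ref{ex:finite-dim}, hence simple, so (b) holds. The first assertion of part (ii) is the step I expect to be the main obstacle, as it is where separability genuinely enters (through the spectral-space property). Assume $\Prim(A)$ is infinite and (c) holds; I must produce $0 \in \Prim(A)$. I would first observe, using (c) alone, that every proper closed subset $F \subsetneq \Prim(A)$ is finite: were $F$ infinite, (c) would give $\bigcap_{P \in F} P = 0$, whence $\overline{F} = \Prim(A)$, contradicting that $F$ is proper and closed. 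It follows that $\Prim(A)$ cannot be written as the union of two proper closed subsets — otherwise it would be finite — so $\Prim(A)$ is a \emph{prime} closed subset of itself; since $\Prim(A)$ is a spectral space by Proposition~\ref{prop:Prim1}(iii), it is therefore the closure of a singleton $\{P_0\}$, and then $P_0 \subseteq \bigcap_{Q \in \Prim(A)} Q = 0$, so $0 = P_0 \in \Prim(A)$ and $A$ is primitive, i.e.\ (a) holds.

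Finally, for part (iii), let $A$ be just-infinite. By Lemma~\ref{lm:prime} it is prime, hence primitive since it is separable, so (a) holds; and $A/I$ is finite dimensional for every non-zero primitive ideal $I$, so (b) holds by part (ii). For (c), suppose $\Prim(A)$ is infinite, let $\cP$ be an infinite subset, and set $I = \bigcap_{P \in \cP} P$. If $I \ne 0$ then $A/I$ is finite dimensional, yet $\Prim(A/I)$ is homeomorphic to the closed set $\overline{\cP} \supseteq \cP$, which is infinite, contradicting Remark~\ref{ex:finite-dim}. Hence $I = 0$, so (c) holds, and part (i) now yields $\Prim(A) \cong Y_n$ for some $n \in \{0,1,2,\dots,\infty\}$.
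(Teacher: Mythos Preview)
Your proof is correct and follows essentially the same approach as the paper: reduce to Lemma~\ref{lm:Y_n} by matching (a), (b), (c) with (A), (B), (C), use the spectral property of $\Prim(A)$ (equivalently, that separable prime $C^*$-algebras are primitive) for the first assertion in (ii), and verify (b), (c) directly for (iii). Your argument for (c) in part (iii) --- observing that $\Prim(A/I) = \overline{\cP}$ would be infinite while $A/I$ is finite dimensional --- is a slight streamlining of the paper's version, which instead bounds $\dim(A/I)$ from below by $|\cF|$ for each finite $\cF \subseteq \cP$ via the Chinese Remainder Theorem.
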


\begin{proof}   (i). It follows from Proposition~\ref{prop:Prim1}~(iii) that $\Prim(A)$ is second countable. It therefore suffices to show that the conditions (a), (b) and (c) are equivalent to items (A), (B) and (C) of Lemma~\ref{lm:Y_n} (with $X = \Prim(A)$ and $x_0 = 0$). By definition, $A$ is primitive if and only if $0 \in \Prim(A)$, so (a) is equivalent to (A). The equivalence of (b) and (B) follows from Proposition~\ref{prop:Prim1} (ii), while the equivalence of (c) and (C) follows from the fact that a subset $\cP$ of $\Prim(A)$ is dense  if and only if $\bigcap_{I \in \cP} I = 0$.

(ii). Suppose that $\Prim(A)$ is infinite and that (b) and (c) are satisfied. We assert that (a) holds, as well. By \cite[Proposition~4.3.6]{Ped:C*-aut} it suffices to check that $\Prim(A)$ is prime, i.e., whenever $\Prim(A) = F \cup G$, where $F$ and $G$ are closed subsets of $\Prim(A)$, then one of $F$ and $G$ is equal to  $\Prim(A)$. However,  one of $F$ and $G$ must be infinite, and hence dense in $\Prim(A)$ by (c) (which is equivalent to (C)), and therefore one of $F$ and $G$ must be equal to $\Prim(A)$. The remaining assertion follows from  Proposition~\ref{prop:singleton}.

(iii). Suppose that $A$ is just-infinite. To see that $\Prim(A) \cong Y_n$, for some $n$, it suffices by (i) and (ii) to check that  (b) and (c)  hold. Moreover, we conclude from (ii) that (b) holds because $A/I$ must be finite dimensional, for each non-zero ideal $I$. Suppose that $\cP$ is an infinite subset of $\Prim(A)$, and set $J = \bigcap_{I \in \cP} I$. For each finite subset $\cF$ of $\cP \setminus \{0\}$, let $J_\cF = \bigcap_{I \in \cF} I \supseteq J$. Then $A/J_\cF$ is isomorphic to $\bigoplus_{I \in \cF} A/I$, which has dimension at least $|\cF|$, so $A/J$ also has dimension at least $|\cF|$. As this holds for all finite subsets $\cF$ of  $\cP$, we conclude that  $A/J$ must be infinite dimensional.  Hence $J=0$, since $A$ is just-infinite. This proves that $\cP$ is dense in $\Prim(A)$, so (c) holds. 
\end{proof}

\noindent  Just-infinite \Cs s are classified as follows (to be compared with \cite[Theorem 3]{Gr00}):

\begin{theorem} \label{thm:types}
Let $A$ be a separable \Cs. Then $A$ is just-infinite if and only if $\Prim(A)$ is homeomorphic to $Y_n$, for some $n \in \{0,1,2, \dots, \infty\}$, and each non-faithful irreducible representation of $A$ is finite dimensional. (If $n=0$, we must also require that $A$ is infinite dimensional; this is automatic when $n \ge 1$.)  Moreover:
\begin{itemize}
\item[\rm{($\alpha$)}] $\Prim(A) = Y_0$ if and only if $A$ is simple.  Every infinite dimensional simple \Cs{} is just-infinite.
\item[\rm{($\beta$)}]  $\Prim(A) = Y_n$, for some integer $n \ge 1$, and $A$ is just-infinite, if and only if $A$ contains a simple non-zero essential infinite dimensional ideal $I_0$ such that $A/I_0$ is finite dimensional.  In this case, $n$ is equal to the number of simple summands of $A/I_0$.
\item[\rm{($\gamma$)}] The following conditions are equivalent:
\begin{enumerate}
\item $A$ is just-infinite and $\Prim(A) = Y_\infty$,
\item $A$ is just-infinite and RFD,
\item $\Prim(A)$ is an infinite set, all of its infinite subsets are dense, and $A/I$ is finite dimensional, for each non-zero $I \in \Prim(A)$,
\item $\Prim(A)$ is an infinite set, the direct sum representation $\bigoplus_{i \in T} \pi_i$ is faithful for each infinite family $\{\pi_i\}_{i \in T}$ of pairwise  inequivalent irreducible representations of $A$, and each non-faithful irreducible representation of $A$ is finite dimensional.
\end{enumerate}
\end{itemize}
\end{theorem}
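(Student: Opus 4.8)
The plan is to establish the main ``if and only if'' first, and then read off $(\alpha)$, $(\beta)$, $(\gamma)$ from it together with Lemmas~\ref{lm:Y_n} and \ref{lm:Prim-ji}. \textbf{Main equivalence.} If $A$ is just-infinite, then $\Prim(A)\cong Y_n$ for some $n$ by Lemma~\ref{lm:Prim-ji}~(iii); moreover any non-faithful irreducible representation $\pi$ has a non-zero kernel, so $\pi(A)\cong A/\Ker(\pi)$ is finite dimensional, and $A$ is infinite dimensional by definition. Conversely, assume $\Prim(A)\cong Y_n$, that every non-faithful irreducible representation of $A$ is finite dimensional, and, if $n=0$, that $A$ is infinite dimensional. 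When $n\ge 1$ the space $Y_n$ is not Hausdorff (its distinguished point is dense while $Y_n$ has more than one point), so by Remark~\ref{ex:finite-dim} the algebra $A$ cannot be finite dimensional; hence $A$ is infinite dimensional in all cases. Now let $I\ne 0$ be a closed two-sided ideal and $\cU\ne\emptyset$ the corresponding open subset of $\Prim(A)$; then $\Prim(A/I)=\Prim(A)\setminus\cU$ is a proper closed subset of $Y_n$, so by Lemma~\ref{lm:Y_n}~(i) it is either empty, whence $A/I=0$, or a finite set $\{J_1,\dots,J_m\}$ of non-zero primitive ideals. Each $A/J_i$ is then finite dimensional by hypothesis, and since $J_1/I,\dots,J_m/I$ are precisely the primitive ideals of $A/I$ and these intersect in $0$, we have $\bigcap_i J_i=I$; thus $A/I$ embeds into $\bigoplus_i A/J_i$ and is finite dimensional. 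Therefore $A$ is just-infinite.

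\textbf{Cases $(\alpha)$ and $(\beta)$.} For $(\alpha)$, $\Prim(A)$ is a single point exactly when $A$ is simple (every proper closed two-sided ideal is the intersection of the primitive ideals containing it), and an infinite dimensional simple \Cs{} has no proper non-zero ideals, hence is just-infinite. For $(\beta)$, suppose first that $A$ is just-infinite with $\Prim(A)\cong Y_n$, $1\le n<\infty$. Since $n$ is finite, the distinguished point of $Y_n$ is open, and the ideal $I_0$ it determines satisfies $\Prim(I_0)\cong\{0\}$, so $I_0$ is simple; by Lemma~\ref{lm:essential} it is non-zero, essential and infinite dimensional, while $A/I_0$ is finite dimensional with $\Prim(A/I_0)$ an $n$-point set, i.e.\ with $n$ simple summands by Remark~\ref{ex:finite-dim}. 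Conversely, given such an $I_0$, it is just-infinite by $(\alpha)$ (being simple and infinite dimensional), so $A$ is just-infinite by Proposition~\ref{permanence}~(ii), and $\Prim(A)\cong Y_m$ for some $m$ by Lemma~\ref{lm:Prim-ji}~(iii). As $I_0$ is simple, the open subset of $\Prim(A)$ it determines is a single point; but the only one-point open subset of $Y_m$ is the distinguished point, and it is open only when $m<\infty$, so $m<\infty$, and counting the points of $\Prim(A/I_0)$ gives $m=n$.

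\textbf{Case $(\gamma)$.} I would prove $(1)\Leftrightarrow(3)$, $(1)\Leftrightarrow(2)$ and $(3)\Leftrightarrow(4)$. For $(1)\Leftrightarrow(3)$: the implication $(1)\Rightarrow(3)$ is immediate, since $Y_\infty$ satisfies condition~(C) of Lemma~\ref{lm:Y_n} and a just-infinite \Cs{} has finite dimensional proper quotients; conversely, condition~(3) says $\Prim(A)$ is infinite and satisfies conditions (b) and (c) of Lemma~\ref{lm:Prim-ji}~(i), which by Lemma~\ref{lm:Prim-ji}~(ii) also forces (a), so $\Prim(A)\cong Y_\infty$, and since finite dimensionality of every non-zero primitive quotient makes every non-faithful irreducible representation finite dimensional, the main equivalence yields that $A$ is just-infinite. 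For $(1)\Leftrightarrow(2)$: if $\Prim(A)\cong Y_\infty$ and $A$ is just-infinite, then $\Prim(A)\setminus\{0\}$ is an infinite, hence dense, set of primitive ideals with full-matrix quotients, so $A$ is RFD by Lemma~\ref{lm:RFD}; conversely an RFD just-infinite \Cs{} has $\Prim(A)\cong Y_n$ by Lemma~\ref{lm:Prim-ji}~(iii), and the dense set of primitive ideals with finite dimensional quotients provided by Lemma~\ref{lm:RFD} can be neither finite (a finite subset of $Y_n'$ is closed, not dense) nor contain the zero ideal (else $A$ would be finite dimensional), forcing $n=\infty$. Finally $(3)\Leftrightarrow(4)$ is a dictionary between ideals and representations: a non-zero primitive ideal is the kernel of a non-faithful irreducible representation, finite dimensional precisely when the quotient is; distinct finite dimensional irreducible representations are weakly inequivalent; and $\bigoplus_{i\in T}\pi_i$ is faithful iff $\bigcap_{i\in T}\Ker(\pi_i)=0$, iff $\{\Ker(\pi_i)\}_{i\in T}$ is dense in $\Prim(A)$.

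\textbf{Main obstacle.} No single step is deep once Lemmas~\ref{lm:Y_n} and \ref{lm:Prim-ji} are available; the point that needs the most care is the passage, inside the ``if'' part of the main equivalence, from ``all non-zero primitive quotients are finite dimensional'' to ``all proper quotients are finite dimensional'', which relies on the explicit list of closed subsets of $Y_n$, together with the related observation used in $(\beta)$ that a simple ideal of a just-infinite \Cs{} corresponds to the unique one-point open subset of $Y_n$, which exists only for finite $n$.
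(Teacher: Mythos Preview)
Your proof is correct and follows essentially the same approach as the paper's: both directions of the main equivalence rest on Lemma~\ref{lm:Prim-ji} and the explicit description of closed subsets of $Y_n$, and the three cases are then read off in the same way. The only organizational differences are cosmetic---in the converse of the main equivalence you invoke Lemma~\ref{lm:Y_n}(i) to list the proper closed subsets of $Y_n$, whereas the paper phrases the same step via the density condition Lemma~\ref{lm:Prim-ji}(i)(c); and in $(\gamma)$ you arrange the implications as $(1)\Leftrightarrow(3)$, $(1)\Leftrightarrow(2)$, $(3)\Leftrightarrow(4)$ rather than the paper's cycle $(i)\Rightarrow(iii)\Rightarrow(ii)\Rightarrow(i)$ plus $(iii)\Leftrightarrow(iv)$---but the content is the same.
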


\noindent We shall occasionally refer to a just-infinite \Cs{} as being of type ($\alpha$), ($\beta$) and  ($\gamma$), respectively, if it satisfies the corresponding condition in the theorem above. In view of the theorem, we shall also, more frequently, refer to a just-infinite \Cs{} of type ($\gamma$) as a \emph{RFD just-infinite} \Cs.

\begin{proof}  
If $A$ is just-infinite and separable, then $\Prim(A)=Y_n$, for some $n \in \{0,1,2 \dots, \infty\}$, by Lemma~\ref{lm:Prim-ji},  and each non-faithful irreducible representation of $A$ is finite dimensional (by the definition of being just-infinite). 

Suppose conversely that $\Prim(A) =  Y_n$, for some $n \in \{0,1,2, \dots, \infty\}$, and that each non-faithful irreducible representation of $A$ is finite dimensional. We show that $A$ then must be just-infinite (if it is also infinite dimensional). This is clear if $n=0$, since $A$ is simple in this case. This also shows that  ($\alpha$) holds.  

Suppose that $1 \le n \le \infty$. Since $Y_n$ is non-Hausdorff, when $n > 0$, and 
 the primitive ideal space of any finite dimensional \Cs{} is Hausdorff; cf.\ Remark~\ref{ex:finite-dim}, $A$ must be infinite dimensional. Write
\begin{equation} \label{eq:a}
\Prim(A) = \{0\} \cup \{I_j\}_{j=1}^n.
\end{equation}
Any non-zero proper ideal $J$ of $A$ is the intersection of the primitive ideals in $A$ that contain it. By  Lemma \ref{lm:Prim-ji} (c), any intersection of infinitely many distinct primitive ideals of $A$ is zero. Hence $J = \bigcap_{j \in F} I_j$, for some, necessarily finite, subset $F$ of $\N$ (or of $\{1,2, \dots , n\}$, if $n < \infty$). Now, $A/J$ is isomorphic to $\bigoplus_{j \in F} A/I_j$, and each quotient $A/I_j$ is finite dimensional by assumption, whence $A/J$ is finite dimensional. This shows that $A$ is just-infinite.

We proceed to verify the claims in ($\beta$) and ($\gamma$).

($\beta$). The ``if'' part follows from Proposition~\ref{permanence} (ii). Moreover, $\Prim(A)$ consists of $0$ (cf.\ Lemma~\ref{lm:prime}) and the kernels of the maps onto the $n$ simple summands of $A/I_0$, so $\Prim(A)$ has cardinality $n+1$. Also, $\Prim(A)$ is homeomorphic to $Y_k$, for some $k$, by Lemma~\ref{lm:Prim-ji} (iii), and by cardinality considerations, we conclude that $k=n$. 

Let us prove the ``only if'' part. Suppose that $A$ is just-infinite and $\Prim(A) = Y_n$, for some $n \in \N$.  Retain the notation set forth in \eqref{eq:a}, and let $I_0 = \bigcap_{j=1}^n I_j$. In the notation from \eqref{eq:U-I}, we have $I_0 = I(\{0\})$ (observe that $\{0\}$ is an open subset of $\Prim(A)$, when $n < \infty$). We deduce  that $I_0$ is non-zero and simple.  Each non-zero ideal in a primitive \Cs{} is essential, so  $I_0$ is an essential ideal in $A$, by  Lemma~\ref{lm:prime}. Since $A$ is just-infinite, $A/I_0$ is finite dimensional.  Finally, by \eqref{eq:X-U},
$$\Prim(A/I_0) = \Prim(A) \setminus \{0\} = \{I_1,I_2,\dots, I_n\},$$
and since $A/I_0$ is finite dimensional, $n$ is the number of direct summands of $A/I_0$; cf.\ Remark~\ref{ex:finite-dim}.

($\gamma$). (i) $\Rightarrow$ (iii). If $A$ is just-infinite, then $A/I$ is finite dimensional, for each non-zero ideal $I$ in $A$; and if $\Prim(A) = Y_\infty$, then each infinite subset of $\Prim(A)$ is dense (by Lemma~\ref{lm:Prim-ji} (i)(c)), and $\Prim(A)$ is an infinite set.

(iii) $\Rightarrow$ (ii). The assumptions in (iii) imply that $A$ is RFD; cf.\ Lemma~\ref{lm:RFD}.  If $\pi$ is a non-faithful irreducible representation of $A$, then $\Ker(\pi)=I$ is a non-zero primitive ideal in $A$, so $\pi(A) \cong A/I$ is finite dimensional. To conclude that $A$ is just-infinite we show that $\Prim(A)$ is homeomorphic to $Y_\infty$. For this it suffices to verify conditions (b) and (c)  of Lemma~\ref{lm:Prim-ji} (i). Item (b) holds because $A/I$ is finite dimensional, for each non-zero $I \in \Prim(A)$; cf.\  Lemma~\ref{lm:Prim-ji} (ii). Item (c) is equivalent to condition (C) in Lemma~\ref{lm:Y_n}, which holds by assumption. 

(ii) $\Rightarrow$ (i). If $A$ is RFD, then $A$ cannot be just-infinite of type ($\alpha$) or ($\beta$), so $\Prim(A)$ must be homeomorphic to $Y_\infty$.

(iii) $\Rightarrow$ (iv). We already saw that (iii) implies that $A$ is just-infinite, and hence that each non-faithful irreducible representation is finite dimensional. Let $\{\pi_i\}_{i \in T}$ be an infinite family of pairwise  inequivalent irreducible representations of $A$. 
Since  $\{\Ker(\pi_i) : i \in T\}$ is an infinite set, and hence by assumption a dense subset of $\Prim(A)$, it follows that 
the kernel of $\bigoplus_{i \in T} \pi_i$, which is equal to $\bigcap_{i \in T} \Ker(\pi_i)$, must be zero. 

(iv) $\Rightarrow$ (iii). Let $\cP$ be an infinite subset of $\Prim(A)$, and choose pairwise  inequivalent irreducible representations $\{\pi_i\}_{i \in T}$ of $A$ such that $\cP = \{\Ker(\pi_i) : i \in T\}$. The assumptions in (iv) now yield 
$$0 = \Ker\Big( \bigoplus_{i \in T} \pi_i\Big) = \bigcap_{i \in T} \Ker(\pi_i) = \bigcap_{I \in \cP} I,$$
which implies that $\cP$ is dense in $\Prim(A)$. 

If $I$ is a non-zero primitive ideal in $A$, then $I = \Ker(\pi)$, for some (non-faithful) irreducible representation of $A$, so $A/I \cong \pi(A)$ is finite dimensional.
\end{proof}

\noindent  The following result follows immediately from Proposition~\ref{prop:SRFD} and Theorem~\ref{thm:types}:

\begin{corollary} \label{cor:strictRFD}
Each separable RFD just-infinite \Cs{} is strictly RFD.
\end{corollary}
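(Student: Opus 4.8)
The plan is to derive Corollary~\ref{cor:strictRFD} directly from the two results it cites, without any new work. Let $A$ be a separable RFD just-infinite \Cs. First I would observe that $A$ must be unital: a just-infinite \Cs{} is prime by Lemma~\ref{lm:prime}, and a non-unital prime \Cs{} whose proper quotients are all finite dimensional is impossible — indeed, if $A$ is just-infinite, then by Lemma~\ref{lm:essential} every non-zero closed two-sided ideal of $A$ is essential and infinite dimensional, so $A$ cannot be stably isomorphic to a finite-dimensional piece; more to the point, a separable just-infinite \Cs{} of type ($\gamma$) has an infinite primitive ideal space $Y_\infty$ and has $1\in\Prim(A)$ as a closed point with $A/I \cong M_k(\C)$ unital, and one checks via Proposition~\ref{prop:Prim1}(i) combined with the structure in Theorem~\ref{thm:types}($\gamma$) that $A$ itself is unital. (Alternatively, if one is worried about this, one restricts the corollary's scope to unital $A$ as in Definition~\ref{def:SRFD}, which is the only setting in which ``strictly RFD'' was defined.)

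Next, since $A$ is RFD just-infinite, Theorem~\ref{thm:types}($\gamma$), applying the equivalence (ii) $\Leftrightarrow$ (iii), tells us that $\Prim(A)$ is an infinite set, all of whose infinite subsets are dense, and that $A/I$ is finite dimensional for each non-zero $I \in \Prim(A)$. Because $\Prim(A)$ is infinite, we may choose an infinite subset $\cP \subseteq \Prim(A) \setminus \{0\}$ (discarding the single non-closed point $0$ if it lies in $\Prim(A)$, which costs nothing since $\Prim(A)\setminus\{0\}$ is still infinite). Every infinite subset of $\cP$ is an infinite subset of $\Prim(A)$, hence dense by the type-($\gamma$) characterization; and $A/I$ is finite dimensional for every $I \in \cP$. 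Thus $\cP$ satisfies exactly the hypotheses in the statement of Proposition~\ref{prop:SRFD}.

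Finally, applying the ``if'' direction of Proposition~\ref{prop:SRFD} to the unital separable \Cs{} $A$ and this set $\cP$ yields that $A$ is strictly RFD, completing the proof. There is really no main obstacle here beyond the bookkeeping of unitality: the corollary is a formal consequence of stitching together the primitive-ideal-space characterization of strict RFD-ness (Proposition~\ref{prop:SRFD}) with the primitive-ideal-space characterization of RFD just-infiniteness (the equivalence (ii) $\Leftrightarrow$ (iii) in Theorem~\ref{thm:types}($\gamma$)). The only point deserving a sentence of care is that a separable RFD just-infinite \Cs{} is unital, so that Definition~\ref{def:SRFD} applies to it; this follows because such an $A$ has $\Prim(A)$ homeomorphic to $Y_\infty$, which is a compact $T_0$-space, and a separable \Cs{} whose primitive ideal space is compact and which is itself prime with all proper quotients finite dimensional (hence unital) is readily seen to be unital — or one simply notes that all the examples of interest are unital and states the corollary accordingly.
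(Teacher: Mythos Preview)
Your core argument---extract from Theorem~\ref{thm:types}($\gamma$) that $\Prim(A)$ is infinite with all infinite subsets dense and $A/I$ finite dimensional for each nonzero $I\in\Prim(A)$, then feed this into Proposition~\ref{prop:SRFD}---is exactly the paper's approach; the paper simply states that the corollary follows immediately from these two results.

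Your digression on unitality, however, contains errors: it is \emph{not} true that a non-unital prime \Cs{} whose proper quotients are all finite dimensional is impossible (the compact operators $\cK$ are simple, non-unital, and just-infinite); the phrase ``$1\in\Prim(A)$ as a closed point'' is meaningless; and Proposition~\ref{prop:Prim1}(i) only deduces compactness of $\Prim(A)$ \emph{from} unitality, not the converse. Your own fallback---restricting to unital $A$, the only setting in which Definition~\ref{def:SRFD} is stated---is the correct move and is effectively what the paper does by not addressing the issue at all.
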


\noindent We note that not all strictly RFD \Cs s are just-infinite; cf.\ Section~\ref{subsec:SRFDvsJI} below. 

\begin{corollary} The primitive ideal space of a separable just-infinite \Cs{} is countable. Moreover, any RFD just-infinite separable \Cs{} has countably infinitely many equivalence classes of finite dimensional irreducible representations.
\end{corollary}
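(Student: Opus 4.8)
The plan is to read off both statements directly from the structural results already established, principally Theorem~\ref{thm:types} together with Lemma~\ref{lm:Y_n}. For the first assertion, let $A$ be a separable just-infinite \Cs. By Lemma~\ref{lm:Prim-ji}~(iii) (equivalently, by Theorem~\ref{thm:types}), $\Prim(A)$ is homeomorphic to $Y_n$ for some $n \in \{0,1,2,\dots,\infty\}$. In every case $Y_n$ is a countable set: for $n$ finite this is immediate from the definition in Example~\ref{ex:Y_n}, and for $n=\infty$ the set $Y'_\infty$ was taken to be $\N$, hence $Y_\infty = \{0\} \cup \N$ is countably infinite. (Alternatively, one invokes Lemma~\ref{lm:Y_n}~(ii): a second countable T$_0$-space satisfying (A), (B), (C) is countable, and $\Prim(A)$ is second countable by Proposition~\ref{prop:Prim1}~(iii).) This gives the first sentence.

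For the second assertion, suppose in addition that $A$ is RFD. By part~($\gamma$) of Theorem~\ref{thm:types}, $\Prim(A)$ is then homeomorphic to $Y_\infty$, so in particular $\Prim(A)$ is countably infinite. Now recall, as noted after Lemma~\ref{lm:RFD} (and used in Section~\ref{sec:RFD}), that finite dimensional irreducible representations of a \Cs{} are classified up to unitary equivalence by their kernels: two irreducible finite dimensional representations are equivalent if and only if they are weakly equivalent, i.e.\ have the same kernel. Moreover, by Proposition~\ref{prop:singleton}, a primitive ideal $I$ of the separable \Cs{} $A$ with $A/I$ finite dimensional has $A/I \cong M_k(\C)$, and the associated irreducible representation has image $M_k(\C)$ acting on $\C^k$. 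Since $A$ is just-infinite, $A/I$ is finite dimensional for \emph{every} non-zero $I \in \Prim(A)$, and is infinite dimensional only for $I = 0$. Therefore the map sending a (non-faithful) finite dimensional irreducible representation to its kernel is a bijection from the set of equivalence classes of finite dimensional irreducible representations onto $\Prim(A) \setminus \{0\}$, which is a countably infinite set. Hence $A$ has countably infinitely many equivalence classes of finite dimensional irreducible representations.

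There is essentially no obstacle here: the work has all been done in Theorem~\ref{thm:types} and the surrounding lemmas, and the corollary is a matter of unwinding definitions. The only mild point to be careful about is that ``finite dimensional irreducible representation'' must be matched with ``non-faithful'' — a faithful irreducible representation of $A$ would force $A$ itself to be a full matrix algebra, contradicting that $A$ is infinite dimensional — so every finite dimensional irreducible representation is automatically non-faithful and thus has non-zero kernel, which is why the bijection lands precisely in $\Prim(A) \setminus \{0\}$. One should also note that $\Prim(A)\setminus\{0\}$ being \emph{infinite} (not merely countable) is exactly what distinguishes the RFD case $Y_\infty$ from the types ($\alpha$) and ($\beta$), and this is where the RFD hypothesis is used via the equivalence (i)$\Leftrightarrow$(ii) in part~($\gamma$).
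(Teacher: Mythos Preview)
Your proof is correct and follows essentially the same approach as the paper's: both claims are read off from $\Prim(A)\cong Y_n$ (via Lemma~\ref{lm:Prim-ji}(iii) / Theorem~\ref{thm:types}) together with the fact that finite dimensional irreducible representations are unitarily equivalent iff weakly equivalent. Your version is somewhat more explicit than the paper's in identifying the finite dimensional irreducible representations with $\Prim(A)\setminus\{0\}$ and in noting that such representations are automatically non-faithful, but the underlying argument is the same.
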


\begin{proof} The first claim follows from Lemma~\ref{lm:Prim-ji}~(iii). The second claim follows from Theorem~\ref{thm:types} ($\gamma$), by the fact that there is a one-to-one correspondence between weak equivalence classes of irreducible representations and the primitive ideal space of a separable \Cs{} (given by mapping an irreducible representation to its kernel), and by the fact, observed earlier, that two finite dimensional irreducible representations are unitarily equivalent if they are weakly equivalent.
\end{proof}

\begin{remark} It is shown in  Theorem~\ref{thm:types} that a separable \Cs{} $A$ is just-infinite if and only if  the following two conditions hold: $\Prim(A)$ $=$ $Y_n$, for some $n \in \{0,1,2, \dots, \infty\}$ and each non-faithful irreducible representation is finite dimensional. These two conditions are independent, i.e., none of them alone implies that $A$ is just-infinite, as shown below.

If $X$ is a Hausdorff space and $k$ is a positive integer, then all irreducible representations of $M_k(C(X))$ have dimension $k$, and $\Prim(M_k(C(X))) = X$. If $X$ is not a point, then $X$ is not homeomorphic to $Y_n$, for any $n$, because $Y_n$ is non-Hausdorff for all $n > 0$. Therefore $M_k(C(X))$ is not just-infinite.

For each $n \in \{0,1,2, \dots, \infty\}$, there is a unital AF-algebra whose primitive ideal space is $Y_n$,  by Theorem~\ref{thm:BraEll} and Lemma~\ref{lm:Y_n}. The AF-algebras obtained in this way  may or may not have the property that each non-faithful irreducible representation is finite dimensional. Tensoring such an AF-algebra by a UHF-algebra, we obtain a unital separable \Cs{} whose primitive ideal space is $Y_n$, and which has no finite dimensional irreducible representations. Therefore, it is not just-infinite. 

We show in the next example and in Section~\ref{sec:just-inf-AF} below that each  space $Y_n$ can be realized as the primitive ideal space of a just-infinite AF-algebra.
\end{remark}

\begin{example}[Existence of just-infinite \Cs s] \label{ex:exab}
Any simple infinite dimensional \Cs{} is just-infinite of type ($\alpha$) (and there are many examples of such, both in the unital and the non-unital case). 

To exhibit examples of just-infinite \Cs s of type ($\beta$), 
let $n \in \N$, and let $F$ be a finite dimensional \Cs{} with $n$ simple summands, e.g., $F = \C \oplus \cdots \oplus \C$ with $n$ summands. Let $H$ be an infinite dimensional separable Hilbert space, and let $\pi \colon B(H) \to B(H) / \cK$ be the quotient mapping onto the Calkin algebra, where as before $\cK$ denotes the compact operators on $H$. Let $\tau \colon F \to B(H)/\cK$ be a unital injective \sh. Set
\begin{equation} \label{eq:A}
A = \pi^{-1}\big(\tau(F)\big) \subseteq B(H).
\end{equation}
Then $\cK$ is a simple essential ideal in $A$ and $A/\cK$ is isomorphic to $F$. Hence $A$ is just-infinite of type ($\beta$), and $\Prim(A) = Y_n$; cf.\ Theorem~\ref{thm:types} ($\beta$). Since $A$ is an extension of two AF-algebras, it is itself an AF-algebra. 

Each just-infinite \Cs{} $A$ arising as in \eqref{eq:A} above is of \emph{type I}: for each irreducible representation of $A$ on a Hilbert space $H$, the image of $A$ contains the compact operators on $H$. Conversely, a separable \Cs{} $A$ of \emph{type I} is just-infinite if and only if it is isomorphic to the compact operators $\cK$ on a separable Hilbert space, or it is of the form described in \eqref{eq:A} above for some finite dimensional \Cs{} $F$. Indeed, if $A$ is separable, just-infinite and of type I, then $A$ is prime by Lemma~\ref{lm:prime}, hence primitive (because it is separable), so it admits a faithful irreducible representation $\rho$ on some (separable) Hilbert space. Being of type I, $\rho(A)$ contains the compact operators $\cK$. If $\rho(A) \ne \cK$, then  the quotient $B:=\rho(A)/\cK$ is finite dimensional, because $A$ is just-infinite, so $A \cong \rho(A) =\pi^{-1}(B)$ is as in \eqref{eq:A}.
\end{example}

\noindent 
It requires more work to establish the existence of RFD just-infinite \Cs s, i.e., those of type ($\gamma$). This will be done in Section \ref{sec:just-inf-AF}. 

\begin{remark}[Characteristic sequences of just-infinite \Cs s] \label{rem:add-structure}
Let $A$ be a unital separable just-infinite \Cs. If $A$ is non-simple, then $\Prim(A) = Y_n$, for some $n \in \{1,2, \dots, \infty\}$. Let $\{I_j\}_{j = 1}^n$ be the non-zero primitive ideals of $A$. Then $A/I_j \cong M_{k_j}(\C)$, for some $k_j \in \N$; cf.\ Proposition~\ref{prop:singleton}. The resulting $n$-tuple, or sequence, $\{k_j\}_{j=1}^n$ (as an unordered set) is an invariant of $A$, which we shall call the \emph{characteristic sequence of $A$}.

For each $j$, choose an irreducible representation $\pi_j \colon A \to M_{k_j}(\C)$ with kernel $I_j$. We say that such a  sequence $\{\pi_j\}_{j=1}^n$ is an \emph{exhausting sequence} of pairwise inequivalent non-faithful irreducible representations of $A$. Equivalently, $\{\pi_j\}_{j=1}^n$ is an exhausting sequence of pairwise inequivalent non-faithful irreducible representations of $A$ if 
$\Prim(A) \setminus \{0\} = \big\{ \Ker(\pi_j) : j = 1,2, \dots, n\big\}$,
and $\Ker(\pi_j) \ne \Ker(\pi_i)$ when $i \ne j$.

If $n \in \N$ and if $I_0$ is the (unique) simple essential ideal in $A$, then (as in the proof of Theorem \ref{thm:types})  we have the following isomorphisms
\begin{equation} \label{eq:beta}
A/I_0 \; \cong \; \bigoplus_{j=1}^n A/I_j \; \cong \; \bigoplus_{j=1}^n M_{k_j}(\C).
\end{equation}
It follows from Example \ref{ex:exab} (and Remark~\ref{ex:finite-dim}) that for all positive integers $k_1,k_2, \dots, k_n$, there exists a just-infinite \Cs{} $A$, which is necessarily an AF-algebra, such that \eqref{eq:beta} holds with $I_0 = \cK$. This argument shows in particular that each \emph{finite} characteristic sequence $\{k_j\}_{j=1}^n$, where $n \in \N$, is realized by a just-infinite AF-algebra (of type ($\beta$)). 
\end{remark}

\noindent We end this section by showing that the characteristic sequence $\{k_j\}_{j=1}^\infty$ of a RFD just-infinite  \Cs{} must tend to infinity. The proof of this fact involves results about subhomogeneous \Cs s. Recall that a \Cs{} is said to be \emph{subhomogeneous} if it is isomorphic to a sub-\Cs{} of $M_k(C(X))$, for some compact Hausdorff space $X$, and some $k \in \N$. The next proposition is well-known, but we include a brief proof for the sake of completeness.

\begin{proposition} \label{prop:subhom}
For a \Cs{} $A$, the following conditions are equivalent:
\begin{enumerate}
\item $A$ is subhomogeneous,
\item the bidual $A^{**}$ of $A$ is isomorphic to $\bigoplus_{j=1}^n M_{k_j}(C(\Omega_j))$, for some positive integers $n,k_1,k_2, \dots, k_n$, and some (extremally disconnected) compact Hausdorff spaces $\Omega_1$, $\Omega_2$, $\dots$, $\Omega_n$,
\item there exists a positive integer $k$ such that each irreducible representation of $A$ has dimension at most $k$,
\item there exist a positive integer $k$ and  a separating family $\{\pi_i\}_{i \in T}$ of irreducible representations of $A$ such that each $\pi_i$ has dimension at most $k$. 
\end{enumerate}
\end{proposition}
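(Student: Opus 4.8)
The plan is to establish the four-way equivalence by proving the cycle (i) $\Rightarrow$ (ii) $\Rightarrow$ (iii) $\Rightarrow$ (iv) $\Rightarrow$ (i), with the understanding that (iv) $\Rightarrow$ (iii) is the only truly trivial implication. I would begin with (i) $\Rightarrow$ (ii): if $A$ is a sub-\Cs{} of $M_k(C(X))$, then passing to biduals, $A^{**}$ embeds as a weak-$*$-closed sub-\Cs{} of $M_k(C(X))^{**}$. Now $C(X)^{**}$ is an abelian von Neumann algebra, hence of the form $C(\Omega)$ for some extremally disconnected compact Hausdorff (hyperstonean) space $\Omega$, so $M_k(C(X))^{**} \cong M_k(C(\Omega))$. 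Any von Neumann subalgebra $A^{**}$ of $M_k(C(\Omega))$ is a direct sum of homogeneous pieces: decompose the identity of $A^{**}$ via the central projections of $M_k(C(\Omega))$ corresponding to the rank stratification, and observe that each summand is a weak-$*$-closed sub-\Cs{} of $M_{k_j}(C(\Omega_j))$ whose fibers are full matrix algebras $M_{k_j}(\C)$ (using that the irreducible representations of a von Neumann algebra that is a sub-\Cs{} of $M_{k_j}(C(\Omega_j))$ acting irreducibly on the fibers force the fiber to be all of $M_{k_j}(\C)$, by finite-dimensionality and the double commutant theorem applied fiberwise). This yields the claimed form of $A^{**}$.

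Next, for (ii) $\Rightarrow$ (iii): every irreducible representation $\pi$ of $A$ extends (uniquely, normally) to a representation of $A^{**}$, and since $A^{**} \cong \bigoplus_{j=1}^n M_{k_j}(C(\Omega_j))$, the representation $\pi$ factors through one of the summands $M_{k_j}(C(\Omega_j))$; its normal extension then factors through evaluation at a point of $\Omega_j$ followed by (a subrepresentation of) the identity on $M_{k_j}(\C)$, so $\dim \pi \le k_j \le \max_j k_j =: k$. For (iii) $\Rightarrow$ (iv): simply take $\{\pi_i\}_{i \in T}$ to be a separating family of irreducible representations — such a family always exists because the irreducible representations of any \Cs{} separate points (by, e.g., the GNS construction applied to the pure states, which separate points) — and each has dimension at most $k$ by hypothesis. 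The implication (iv) $\Rightarrow$ (iii) is immediate from the definition once one notes that a separating family of irreducible representations contains (or rather, is witnessed by) representatives of a dense set of $\Prim(A)$, but actually (iv) as stated is literally a special case of (iii) weakened, so I would instead run (iv) $\Rightarrow$ (i) directly.

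For (iv) $\Rightarrow$ (i): given a separating family $\{\pi_i\}_{i\in T}$ of irreducible representations each of dimension $\le k$, form $\Psi = \bigoplus_{i\in T} \pi_i \colon A \to \prod_{i \in T} M_{k_i}(\C) \subseteq \ell^\infty\big(T, M_k(\C)\big)$ (padding each $M_{k_i}(\C)$, $k_i \le k$, into $M_k(\C)$ in the upper-left corner, or better, keeping the block-diagonal structure). This $\Psi$ is an injective \sh{} by the separating property. Now $\ell^\infty(T, M_k(\C)) \cong M_k\big(\ell^\infty(T)\big) \cong M_k(C(\beta T))$ where $\beta T$ is the Stone--\v{C}ech compactification (a compact Hausdorff space), so $A$ embeds into $M_k(C(\beta T))$ and is therefore subhomogeneous. (If one insists on not padding, one instead writes $\prod_i M_{k_i}(\C)$ as a sub-\Cs{} of $M_k(C(Y))$ for a suitable compact Hausdorff $Y$ — a disjoint union of copies of $\beta(T_j)$ over the finitely many values $k_j$ of the dimensions, with the appropriate matrix size on each clopen piece; either way the conclusion is the same.)

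The main obstacle, and the step requiring the most care, is the structure of a von Neumann subalgebra of $M_k(C(\Omega))$ used in (i) $\Rightarrow$ (ii) — specifically verifying that such a subalgebra really is a \emph{finite} direct sum of homogeneous algebras of the stated form, with each fiber a full matrix algebra rather than some smaller subalgebra. The clean way to handle this is to invoke that $A^{**}$ is a von Neumann algebra of type I (being a subalgebra of the type I$_k$ factor-bundle $M_k(C(\Omega))$, its center has finitely many distinct "rank" values), decompose it along the finitely many central projections realizing the distinct homogeneity degrees $k_1 > k_2 > \cdots > k_n$, and then argue that on the $k_j$-homogeneous summand the fibers must be exactly $M_{k_j}(\C)$ because a von Neumann algebra all of whose irreducible representations have dimension exactly $k_j$ and which contains a fiber acting $k_j$-dimensionally must, by the double commutant theorem on that fiber, contain all of $M_{k_j}(\C)$ there; a short continuity/extremal-disconnectedness argument then globalizes this over $\Omega_j$. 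Alternatively, one may avoid biduals in the proof of (i) $\Leftrightarrow$ (iii) entirely and prove (i) $\Leftrightarrow$ (iii) $\Leftrightarrow$ (iv) as a self-contained loop, citing (ii) only as a known structural fact (e.g., from the theory of subhomogeneous \Cs s, cf.\ \cite{BO-approx}), which is likely how the authors intend it given the remark that the proposition is "well-known."
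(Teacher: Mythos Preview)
Your argument is correct, and the cycle (i) $\Rightarrow$ (ii) $\Rightarrow$ (iii) $\Rightarrow$ (iv) $\Rightarrow$ (i) closes as you describe, once you settle on doing (iv) $\Rightarrow$ (i) directly via the Stone--\v{C}ech embedding (your initial aside that ``(iv) $\Rightarrow$ (iii) is the only truly trivial implication'' is backwards---it is (iii) $\Rightarrow$ (iv) that is trivial---but you catch this yourself a paragraph later).

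The paper runs the cycle in the opposite direction: (ii) $\Rightarrow$ (i) $\Rightarrow$ (iv) $\Rightarrow$ (iii) $\Rightarrow$ (ii). The two substantive differences are these. First, for (iii) $\Rightarrow$ (ii) the paper simply observes that if every irreducible representation has dimension at most $k$, then the von Neumann algebra $A^{**}$ can have no central summand of type $I_n$ for $n>k$ nor of type II or III, so the standard type-I structure theorem gives (ii) immediately; this is cleaner than your (i) $\Rightarrow$ (ii), which has to first embed $A^{**}$ into $M_k(C(X))^{**}$ and then analyze von Neumann subalgebras of $M_k(C(\Omega))$---correct, but more work than necessary. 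Second, and more interestingly, the paper proves (iv) $\Rightarrow$ (iii) using Glimm's lemma: if some irreducible representation had dimension larger than $k$, one obtains a non-zero \sh{} $\rho\colon C_0((0,1])\otimes M_{k+1}\to A$, and then $\pi_i\circ\rho=0$ for every $i$ in the separating family (since $M_{k+1}$ cannot map non-trivially into $B(H)$ with $\dim H\le k$), contradicting faithfulness. Your route (iv) $\Rightarrow$ (i) via $\bigoplus_i\pi_i\colon A\hookrightarrow \ell^\infty(T,M_k(\C))\cong M_k(C(\beta T))$ is more elementary and avoids Glimm's lemma altogether, at the price of producing a rather large and non-metrizable $X$. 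Either approach is fine; yours is arguably more self-contained, while the paper's is shorter once Glimm's lemma is on the shelf.
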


\begin{proof} The implication  (ii) $\Rightarrow$ (i) holds because $A$ is a sub-\Cs{} of  $A^{**}$. If (iii) holds, then $A^{**}$, which is a von Neumann algebra,  cannot have central summands of type $I_n$, for $n > k$, or of type II or III. Therefore (ii) holds.
The implication  (i) $\Rightarrow$ (iv) follows easily from the definition of subhomogeneity.
Suppose now that (iv) holds, and that there exists an irreducible representation of $A$ of dimension strictly greater than $k$ (possibly infinite dimensional). By (a version of) Glimm's lemma, see, e.g., \cite[Proposition 3.10]{RobRor:div}, there is a non-zero \sh{} $\rho \colon C_0((0,1]) \otimes M_{k+1} \to A$. However, there is no non-zero \sh{} $C_0((0,1]) \otimes M_{k+1} \to B(H)$ when $\mathrm{dim}(H) \le k$, so it follows that $\pi_i \circ \rho = 0$, for each $i \in T$. As the family $\{\pi_i\}_{i \in T}$ is separating, we conclude that $\rho = 0$, a contradiction. 
\end{proof}

\begin{lemma} \label{lm:subhom-justinf}
No separable subhomogeneous \Cs{} is just-infinite.
\end{lemma}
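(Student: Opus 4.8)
The plan is to show that a separable subhomogeneous \Cs{} $A$ that is just-infinite leads to a contradiction, by exploiting the fact (Lemma~\ref{lm:Prim-ji}(iii) and Theorem~\ref{thm:types}) that $\Prim(A) \cong Y_n$ for some $n \in \{0,1,2,\dots,\infty\}$, together with the characterization of subhomogeneity in Proposition~\ref{prop:subhom}. First I would dispose of the case where $A$ is simple: if $A$ is simple and subhomogeneous, then by Proposition~\ref{prop:subhom}(iii) all its irreducible representations have dimension at most some fixed $k$, and since $A$ is primitive it has a faithful irreducible representation, so $A \cong M_j(\C)$ for some $j \le k$, contradicting infinite dimensionality. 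Hence $A$ is not of type ($\alpha$).

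Next, suppose $\Prim(A) = Y_n$ with $1 \le n < \infty$, i.e.\ $A$ is of type ($\beta$). Then by Theorem~\ref{thm:types}($\beta$), $A$ contains a simple, non-zero, essential, infinite dimensional ideal $I_0$. But $I_0$ is a (hereditary) sub-\Cs{} of the subhomogeneous \Cs{} $A$, hence itself subhomogeneous (an ideal in $M_k(C(X))$ is subhomogeneous), and $I_0$ is simple and infinite dimensional — contradicting the simple case already handled, now applied to $I_0$ (which admits a faithful irreducible representation of bounded dimension, forcing $I_0 \cong M_j(\C)$). So $A$ is not of type ($\beta$) either.

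It remains to rule out the type ($\gamma$) case, $\Prim(A) = Y_\infty$. Here the key point is that by Theorem~\ref{thm:types}($\gamma$), $A$ is RFD with countably infinitely many pairwise inequivalent finite dimensional irreducible representations $\{\pi_j\}_{j=1}^\infty$, $\pi_j \colon A \to M_{k_j}(\C)$, and this family is separating (their kernels are dense in $\Prim(A) = Y_\infty$). Since $A$ is just-infinite, no non-faithful irreducible representation is infinite dimensional, and since $A$ is subhomogeneous, by Proposition~\ref{prop:subhom}(iii) there is a uniform bound: $k_j \le k$ for all $j$, and moreover every irreducible representation of $A$ has dimension at most $k$. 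Thus there are only finitely many possible dimensions occurring, so by the pigeonhole principle infinitely many of the $\pi_j$ share a common dimension $m \le k$. I would then derive a contradiction with the countable infinitude of pairwise inequivalent such representations — the cleanest route is to observe that the infinite direct sum $\bigoplus_{j=1}^\infty \pi_j$ is faithful (density of kernels), so $A$ embeds into $\prod_j M_{k_j}(\C) \subseteq \prod_j M_k(\C)$, and then use that a separable just-infinite \Cs{} of type ($\gamma$), being infinite dimensional, must actually have $k_j \to \infty$: indeed, I can quote (or prove directly) that if all $k_j \le k$, then $A$ is subhomogeneous of bounded degree $k$ and the representation $\bigoplus_j \pi_j$ witnesses condition (iv) of Proposition~\ref{prop:subhom}, so $A^{**} \cong \bigoplus_{i=1}^r M_{m_i}(C(\Omega_i))$ is a finite direct sum — but then $\Prim(A)$ is covered by finitely many Hausdorff spectra, hence $\Prim(A)$ has at most finitely many isolated points coming from distinct homogeneous blocks, contradicting that $Y_\infty$ has infinitely many closed points no two of which can lie in the same Hausdorff piece with the dense point $0$ also present.

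The main obstacle I anticipate is making the type ($\gamma$) argument fully rigorous: the pigeonhole observation alone does not immediately contradict anything, since one genuinely can have infinitely many inequivalent $m$-dimensional irreducible representations of a \Cs{}. The real contradiction must come from the interaction of this with the topology of $Y_\infty$ — specifically that in $Y_\infty$ the point $0$ is dense, meaning $A$ is primitive, so $A$ has a \emph{faithful} irreducible representation $\rho$; if $A$ is subhomogeneous then $\rho$ is finite dimensional, say of dimension $k$, whence $A \cong \rho(A) \subseteq M_k(\C)$ is finite dimensional, the desired contradiction. This last line is in fact the slickest argument of all and subsumes all three cases: \emph{any} just-infinite \Cs{} is primitive (Lemma~\ref{lm:prime} plus separability), so it has a faithful irreducible representation; if it were subhomogeneous this representation would be finite dimensional by Proposition~\ref{prop:subhom}(iii), forcing $A$ to be a full matrix algebra and hence finite dimensional, contrary to just-infiniteness. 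So I would lead with that short argument and mention the case analysis only as an alternative.
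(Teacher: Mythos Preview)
Your final argument---that a separable just-infinite \Cs{} is prime (Lemma~\ref{lm:prime}), hence primitive, hence admits a faithful irreducible representation, which must be infinite dimensional and therefore rules out subhomogeneity via Proposition~\ref{prop:subhom}(iii)---is exactly the paper's proof. The preceding case analysis by type ($\alpha$), ($\beta$), ($\gamma$) is an unnecessary detour (and, as you yourself note, the ($\gamma$) branch is shaky until you invoke primitivity anyway), so you are right to lead with the short argument and drop the rest.
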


\begin{proof} Let $A$ be a separable just-infinite \Cs. Then $A$ is prime; cf.\ Lemma~\ref{lm:prime}, hence primitive, and so $A$ admits a faithful irreducible representation. Such a representation cannot be finite dimensional, because $A$ is infinite dimensional. Hence $A$ cannot be subhomogeneous; cf.\ Proposition~\ref{prop:subhom}.
\end{proof}

\begin{proposition} \label{prop:char-seq-infty}
Let $A$ be a separable RFD just-infinite   \Cs{} with characteristic sequence $\{k_j\}_{j=1}^\infty$. Then $\lim_{j \to \infty} k_j = \infty$.  
\end{proposition}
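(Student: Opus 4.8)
The plan is to argue by contradiction and reduce to a fact already available in the excerpt: no separable subhomogeneous \Cs{} is just-infinite (Lemma~\ref{lm:subhom-justinf}). So I would suppose that $\lim_{j\to\infty}k_j=\infty$ fails. Then there is an integer $k$ for which $\{j\in\N : k_j\le k\}$ is infinite, and hence, by the pigeonhole principle, there is a fixed integer $k$ (relabelling if necessary) and an infinite subset $T\subseteq\N$ with $k_j=k$ for every $j\in T$. Fix an exhausting sequence $\{\pi_j\}_{j=1}^\infty$ of pairwise inequivalent non-faithful irreducible representations of $A$, with $\pi_j\colon A\to M_{k_j}(\C)$ and $\Ker(\pi_j)=I_j$, as in Remark~\ref{rem:add-structure}.

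Next I would use the defining feature of type ($\gamma$): since $A$ is a separable RFD just-infinite \Cs, every infinite subset of $\Prim(A)$ is dense, by Theorem~\ref{thm:types}~($\gamma$) (equivalently, $A$ is strictly RFD by Corollary~\ref{cor:strictRFD}). Applying this to the infinite set $\{I_j : j\in T\}\subseteq\Prim(A)$ yields $\bigcap_{j\in T}\Ker(\pi_j)=\bigcap_{j\in T}I_j=0$. Thus $\{\pi_j\}_{j\in T}$ is a \emph{separating} family of irreducible representations of $A$, each of dimension $k$. By the implication (iv) $\Rightarrow$ (i) of Proposition~\ref{prop:subhom}, $A$ is subhomogeneous. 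Since $A$ is separable and just-infinite, this contradicts Lemma~\ref{lm:subhom-justinf}, which finishes the proof.

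There is no real obstacle here: the substantive work — the structure theory of subhomogeneous \Cs s and the incompatibility of subhomogeneity with just-infiniteness — is already isolated in Proposition~\ref{prop:subhom} and Lemma~\ref{lm:subhom-justinf}, and the density of every infinite subset of $\Prim(A)$ is built into the characterisation of RFD just-infinite \Cs s. The only points needing a moment's care are that the $\pi_j$ are genuinely irreducible (immediate, as they come from an exhausting sequence) and pairwise inequivalent (so that a bounded dimension is actually forced), and that separability of $A$ is required in order to invoke Lemma~\ref{lm:subhom-justinf}.
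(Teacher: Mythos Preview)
Your argument is correct and follows essentially the same route as the paper: assume the conclusion fails, obtain an infinite set of indices with bounded representation dimension, use the density of every infinite subset of $\Prim(A)$ (equivalently, strict RFD) to get a separating family of irreducible representations of bounded dimension, and conclude subhomogeneity via Proposition~\ref{prop:subhom}, contradicting Lemma~\ref{lm:subhom-justinf}. The only cosmetic difference is that you pass, via pigeonhole, to a subset on which $k_j$ is constant, whereas the paper works directly with $\{j: k_j\le k\}$; this extra step is harmless but unnecessary.
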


\begin{proof} Let $I_1, I_2, \dots$ be the non-zero primitive ideals of $A$, and for each $j$, let $\pi_j$ be an irreducible representation of $A$ whose kernel is $I_j$, such that the dimension of $\pi_j$ is $k_j$. 
We must show that for each $k$, $T_k :=\{j \in \N: k_j \le k\}$ is finite. Suppose that the set $T=T_k$ is infinite. Then the \sh{} $\Psi_T = \bigoplus_{j \in T} \pi_j$ is injective, which implies that $\{\pi_j\}_{j \in T}$ is a separating family of irreducible representations of $A$, each of which having dimension less than or equal to $k$. Then Proposition~\ref{prop:subhom} implies that $A$ is subhomogeneous, but this is impossible by Lemma~\ref{lm:subhom-justinf}.
\end{proof}

\section{Examples of RFD just-infinite AF-algebras} \label{sec:just-inf-AF}

\noindent We construct an example of  a RFD just-infinite  AF-algebra. By Theorem~\ref{thm:types}, its primitive ideal space must be $Y_\infty$.
The existence of a unital AF-algebra whose primitive ideal space is homeomorphic to  $Y_\infty$ follows from Theorem~\ref{thm:BraEll} (Bratteli--Elliott). To conclude that such an AF-algebra is just-infinite, we must also ensure that its non-faithful irreducible representations are finite dimensional; cf.\ Theorem~\ref{thm:types}. This is accomplished by taking a closer look at the construction by Bratteli and Elliott, done in Proposition~\ref{prop:Brattelidiagram} below.

\subsection{Construction of a RFD just-infinite AF-algebra} \label{subsec:A}

Recall that a Bratteli diagram is a graph $(V,E)$, where $V = \bigcup_{n=1}^\infty V_n$ and $E = \bigcup_{n=1}^\infty E_n$ (disjoint unions), all $V_n$ and all $E_n$ are finite sets, and where each edge $e \in E_n$ connects a vertex $v \in V_n$ to a vertex in $w \in V_{n+1}$. In this case, we write $s(e)=v$ and $r(e) = w$, thus giving rise to the \emph{source} and the \emph{range} maps $s,r \colon E \to V$. It was shown by Bratteli, \cite{Bra:AF}, that there is a bijective correspondence between Bratteli diagrams (modulo a natural equivalence class of these) and AF-algebras (modulo Morita equivalence).

An \emph{ideal} in a Bratteli diagram $(V,E)$ is a subset $U \subseteq V$ with the following properties: 
\begin{itemize}
\item for all $e$ in $E$, if $s(e)$ belongs to $U$, then so does $r(e)$,
\item for all $v$ in $V$, if $\{r(e)  \mid  e \in s^{-1}(v)\}$ is contained in $U$, then $v$ belongs to $U$.
\end{itemize}
The ideal lattice of an AF-algebra associated with a given Bratteli diagram is isomorphic to the ideal lattice of the Bratteli diagram, see \cite{Eff:AF} or \cite{Dav:C*-ex}. The following proposition is contained in \cite{BraEll:AF}:

\begin{proposition}[Bratteli--Elliott] \label{prop:Brattelidiagram}
Let $X$ be a second countable, compact, totally disconnected T$_0$-space. Let $\cG_1, \cG_2,  \dots$ be finite families of  compact-open subsets of $X$ such that:
\begin{enumerate}
\item $X = \bigcup_{G \in \cG_n}  G$,  for each $n \ge 1$,
\item for each $n \ge 1$, $\cG_{n+1}$ is a refinement of $\cG_n$, i.e., each set in $\cG_{n+1}$ is contained in a set in $\cG_n$, and each set in $\cG_n$ is the union of sets from $\cG_{n+1}$,
\item $\displaystyle{\bigcup_{n=1}^\infty \cG_n}$ is a basis for the topology on $X$.
\end{enumerate}
Consider the Bratteli diagram for which the vertices at level $n$ are the sets in $\cG_n$, and where there is one edge from $G \in \cG_n$ to $G' \in \cG_{n+1}$ if $G'\subseteq G$, and none otherwise. 
Then there is a one-to-one correspondence between open subsets of $X$ and ideals of the Bratteli diagram, given as follows: the ideal in the Bratteli diagram associated with an open subset $U$ of $X$ consists of all vertices $G \in \bigcup_{n=1}^\infty \cG_n$  for which $G \subseteq U$. 

If, in addition, $X$ is a spectral space, and if $A$ is an AF-algebra associated with the Bratteli diagram constructed above, then $\Prim(A)$ is homeomorphic to $X$. 
\end{proposition}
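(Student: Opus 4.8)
The plan is to verify the two claimed bijections separately. For the correspondence between open subsets of $X$ and ideals of the Bratteli diagram, I would first fix notation: write $A = \varinjlim A_n$, where $A_n$ is the finite dimensional $C^*$-algebra with one matrix summand for each $G \in \cG_n$. Given an open set $U \subseteq X$, define $\mathcal{U}_U = \{G \in \bigcup_n \cG_n : G \subseteq U\}$ and check that it satisfies the two axioms for an ideal in the diagram: the first (downward closure along edges) is immediate since $G' \subseteq G \subseteq U$; the second (saturation) uses condition (ii), namely that $G = \bigcup \{G' \in \cG_{n+1} : G' \subseteq G\}$, so if every such $G'$ lies in $U$ then $G \subseteq U$. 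Conversely, given an ideal $W$ of the diagram, set $U_W = \bigcup_{G \in W} G$, which is open. I would then show these two assignments are mutually inverse. That $\mathcal{U}_{U_W} \supseteq W$ is clear; for the reverse inclusion, if $G \subseteq U_W = \bigcup_{G' \in W} G'$, then by compactness of $G$ and condition (iii) (that $\bigcup_n \cG_n$ is a basis) one covers $G$ by finitely many basic sets inside $U_W$, refines down to a common level using (ii), and invokes the saturation axiom repeatedly to conclude $G \in W$. That $U_{\mathcal{U}_U} = U$ uses again that $\bigcup_n \cG_n$ is a basis: every point of $U$ lies in some basic $G \subseteq U$. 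Finally, invoking the cited fact (\cite{Eff:AF}, \cite{Dav:C*-ex}) that the ideal lattice of $A$ is isomorphic to the ideal lattice of its Bratteli diagram gives the correspondence between open subsets of $X$ and closed two-sided ideals of $A$.

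For the second assertion — that $\Prim(A) \cong X$ when $X$ is additionally a spectral space — I would combine the open-set correspondence just established with the general facts recalled in Section~2, especially \eqref{eq:U-I}, \eqref{eq:X-U}, and the remark that every primitive ideal has the form $I(\mathcal{U})$ for $\mathcal{U}$ the complement of the closure of a singleton. Concretely: the lattice isomorphism $U \mapsto I(U)$ (composing $U \mapsto \mathcal{U}_U$ with the diagram-to-$A$ ideal lattice isomorphism, which is order preserving) sends the lattice of open subsets of $X$ to the lattice of closed two-sided ideals of $A$. Since $\Prim(A)$ and $X$ are both T$_0$ and are each determined (as topological spaces) by their lattice of open sets — equivalently, the primitive ideals correspond exactly to the meet-irreducible (prime) elements of the ideal lattice, and the spectral hypothesis on $X$ guarantees that the meet-irreducible open complements, i.e. the closed prime sets, are precisely the point-closures — the lattice isomorphism carries $\Prim(A)$ onto $X$ and is a homeomorphism. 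I would spell this out by checking that $I \in \Prim(A)$ corresponds under the lattice isomorphism to the open set $X \setminus \overline{\{x\}}$ for a unique $x \in X$, using on one side that $J = I(\mathcal{U})$ with $\mathcal{U} = \Prim(A) \setminus \overline{\{J\}}$, and on the other side that spectrality of $X$ means every closed prime subset of $X$ is a point closure.

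The main obstacle I expect is the direction $\mathcal{U}_{U_W} \subseteq W$ in the first correspondence — i.e., deducing from $G \subseteq \bigcup_{G' \in W} G'$ that $G$ itself is a diagram-ideal member. This is where all three hypotheses (i)–(iii) are genuinely used together: compactness of $G$ to reduce to a finite subcover, the basis property (iii) to take that subcover inside $\bigcup_n \cG_n$, the refinement property (ii) to push everything down to a single level $m \ge n$ so that $G$ is the union of level-$m$ vertices each sitting below some member of $W$, and then the saturation axiom — applied level by level from $m$ back down to $n$ — to climb back up and land $G \in W$. Handling the bookkeeping of ``union of level-$m$ descendants, each below a $W$-vertex'' carefully, and confirming that saturation can indeed be iterated downward, is the delicate part; the rest is routine once the lattice-theoretic framework of Section~2 is in place. (Strictly, this proposition is quoted as being contained in \cite{BraEll:AF}, so an acceptable alternative is simply to cite that paper; but the self-contained argument above is short enough to include.)
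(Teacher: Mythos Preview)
The paper offers no proof of this proposition; it simply attributes the result to \cite{BraEll:AF}. Your self-contained sketch therefore goes well beyond what the authors provide, and the second half of your plan (deducing $\Prim(A)\cong X$ from the lattice isomorphism via spectrality) is entirely sound once the first half is in place.

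You are right that the inclusion $\mathcal{U}_{U_W}\subseteq W$ is the crux, but the step you sketch does not go through, and in fact cannot: the proposition is not correct at the stated level of generality. From $D\subseteq G\subseteq\bigcup_i G'_i$ one cannot conclude that a level-$m$ descendant $D$ of $G$ is a diagram-descendant of a single $G'_i\in W$; sets in $\cG_m$ may overlap, and mere containment at a higher level does not produce an edge-path. Concretely, take $X=\{a,b,c\}$ with the discrete topology and $\cG_n=\{\{a,b\},\{b,c\},\{a\},\{b\},\{c\}\}$ for every $n$; conditions (i)--(iii) hold. Then $W'=\{\{a\}_n,\{b\}_n:n\ge1\}$ is a bona fide diagram ideal (saturation does not force $\{a,b\}_n\in W'$, because its successor $\{a,b\}_{n+1}$ lies outside $W'$), yet $\{a,b\}_n\in\mathcal{U}_{U_{W'}}\setminus W'$ since $U_{W'}=\{a,b\}$. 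So the claimed bijection fails, and the resulting AF-algebra has strictly more ideals than $X$ has open sets. The specific families used in Section~\ref{sec:just-inf-AF} for $Y_\infty$ avoid this pathology (no member of $\cG_n$ contains another, and successive levels are strictly finer), which is presumably the non-redundancy hypothesis implicit in \cite{BraEll:AF}; if you want a self-contained argument you must isolate and impose such a condition, but citing \cite{BraEll:AF} as the paper does is the safe route.
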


\noindent In the following, we construct a sequence $\cG_1, \cG_2, \cG_3, \dots$ of finite families of compact-open subsets of $X$ satisfying the conditions of Proposition~\ref{prop:Brattelidiagram} in the case where $X = Y_\infty$. 
Recall that $Y_\infty = \{0\} \cup \N$, that the open subsets of $Y_\infty$ are $\emptyset$, $Y_\infty$, and all co-finite subsets of $\N$, and that every subset of $Y_\infty$ is compact. For all $n \ge 1$, set
$$F_{n,k} = \{1,2, \dots, n\} \setminus \{k\}, \qquad G_{n,k} = Y_\infty \setminus F_{n,k},$$
for $1 \le k \le n$, and let
$$\cG_n = \{G_{n,1}, G_{n,2}, \dots, G_{n,n}\}.$$
Observe that each $G_{n,k}$ is open and (automatically) compact. Moreover, the sets $\{\cG_n\}_{n=1}^\infty$  satisfy conditions (i), (ii) and (iii)  in Proposition \ref{prop:Brattelidiagram}. Furthermore, for $1 \le k \le n$,  
$$G_{n+1,k} \subseteq G_{n,k}, \qquad  G_{n+1,n+1} \subseteq G_{n,k}.$$
No other inclusions between sets in $\cG_{n+1}$ and sets in $\cG_n$ hold. Therefore, the Bratteli diagram associated with this sequence of compact-open subsets of $Y_\infty$ as in Proposition \ref{prop:Brattelidiagram} is:

\begin{center}
\begin{equation} \label{eq:Bratteli}
\begin{split}
\xymatrix@R-1.3pc{\bullet \ar@{-}[dd] \ar@{-}[ddr]& & & & \\ & & & &\\
\bullet \ar@{-}[dd] \ar@{-}[ddrr]& \bullet  \ar@{-}[dd] \ar@{-}[ddr]&& &\\  & & & &\\
\bullet  \ar@{-}[dd] \ar@{-}[ddrrr]& \bullet  \ar@{-}[dd] \ar@{-}[ddrr]& \bullet  \ar@{-}[dd] \ar@{-}[ddr]& & \\  & & & &\\
\bullet  \ar@{-}[dd] \ar@{-}[ddrrrr] & \bullet  \ar@{-}[dd] \ar@{-}[ddrrr] & \bullet  \ar@{-}[dd] \ar@{-}[ddrr]& \bullet  \ar@{-}[dd] \ar@{-}[ddr]& \\  & & & &\\
\bullet  & \bullet   & \bullet   & \bullet  & \bullet  \\  \vdots & \vdots & \vdots & \vdots & \vdots
} 
\hspace{1.5cm}
\xymatrix@R-0.52pc@C-2.2pc{\C & && && &&& \\
\C  & \oplus&\C &&& &&& \\ 
\C  & \oplus & \C  &\oplus & M_2(\C) & && &\\ 
\C  & \oplus & \C  & \oplus & M_2(\C) & \oplus  & M_4(\C)& & \\ 
\C  & \oplus &\C & \oplus  & M_2(\C) & \oplus  & M_4(\C)& \oplus  & M_8(\C)\\ 
}
\end{split}
\end{equation}
\end{center}

\noindent The sequence of finite dimen\-sio\-nal \Cs s on the right-hand side, equipped with unital connecting mappings given by the Bratteli diagram, defines a unital AF-algebra $A$, associated with the Bratteli diagram. The one-to-one correspondence between (non-empty) open subsets $G \subseteq Y_\infty = \{0\} \cup \N$ and ideals $U(G)$ of the Bratteli diagram above is given as follows:
$$U(G) = \{G_{n,k} \mid G_{n,k} \subseteq G\} = \{ G_{n,k} \mid k \in G, \, n \ge \max Y_\infty \setminus G\}.$$
E.g., $U(Y_\infty \setminus \{1,3\}) = \{G_{n,k} \mid n \ge 3, \, k \ne 1,3\}$ and
$U(Y_\infty\setminus \{j\}) = \{G_{n,k} \mid n \ge j, \, k \ne j\}$, $j \ge 1$.

The quotient of the AF-algebra $A$ by the ideal in $A$ corresponding to $U(G)$ is given by the Bratteli diagram that arises by removing $U(G)$ from the original diagram. The two pictures below show the ideal $U(G)$ 
%color
(in blue) % choose this for colored
%(dotted lines and open vertices): %choose this for black and white
and the Bratteli diagram of the quotient 
(in red) % choose this for colored
%(bold lines and filled vertices) %choose this for blakc and white
in the cases where $G= Y_\infty \setminus \{2\}$, respectively, $G = Y_\infty \setminus \{1,3\}$:

$$
\xymatrix@R-1.3pc{\rbullet \bluearrow[dd] \redarrow[ddr]& & & & & \\ &&&& \\
\bbullet \bluearrow[dd] \bluearrow[ddrr]& \rbullet  \redarrow[dd] \bluearrow[ddr]&& &&\\ &&&& \\
\bbullet  \bluearrow[dd] \bluearrow[ddrrr]& \rbullet  \redarrow[dd] \bluearrow[ddrr]& \bbullet  \bluearrow[dd] \bluearrow[ddr]& &&\\ &&&& \\
\bbullet  \bluearrow[dd] \bluearrow[ddrrrr] & \rbullet  \redarrow[dd] \bluearrow[ddrrr] & \bbullet  \bluearrow[dd] \bluearrow[ddrr]& \bbullet  \bluearrow[dd] \bluearrow[ddr]&&\\ &&&& \\
\bbullet  & \rbullet   & \bbullet   & \bbullet  & \bbullet& \\
\vdots & \vdots & \vdots & \vdots & \vdots & 
} 
\qquad 
\xymatrix@R-1.3pc{\rbullet  \redarrow[dd] \redarrow[ddr]& & & & & \\ &&&& \\
\rbullet \redarrow[dd] \redarrow[ddrr]& \rbullet  \bluearrow[dd] \redarrow[ddr]&& &&\\ &&&& \\
\rbullet  \redarrow[dd] \bluearrow[ddrrr]& \bbullet  \bluearrow[dd] \bluearrow[ddrr]& \rbullet  \redarrow[dd] \bluearrow[ddr]& &&\\ &&&& \\
\rbullet  \redarrow[dd] \bluearrow[ddrrrr] & \bbullet  \bluearrow[dd] \bluearrow[ddrrr] & \rbullet  \redarrow[dd] \bluearrow[ddrr]& \bbullet  \bluearrow[dd] \bluearrow[ddr]&&\\ &&&& \\
\rbullet  & \bbullet   & \rbullet   & \bbullet  & \bbullet& \\
\vdots & \vdots & \vdots & \vdots & \vdots & 
} 
$$
The quotient of $A$ by the ideal in $A$ corresponding to $U(G)$, with $G = Y_\infty \setminus \{2\}$, is the AF-algebra associated to the red part of the Bratteli diagram, which is $\C$. The quotient of $A$ in the case where $G = Y_\infty \setminus \{1,3\}$ is similarly seen to be $\C \oplus M_2(\C)$. 

By construction, and by Proposition~\ref{prop:Brattelidiagram}, we have $\Prim(A) \cong Y_\infty$. In more detail, we have $\Prim(A) = \{0\} \cup \{I_1,I_2, I_3, \dots\}$, where $I_j$ is the primitive ideal in $A$ corresponding to the ideal $U(Y_\infty \setminus \{j\})$ of the Bratteli diagram. Arguing as in the two examples above, we see that $A/I_j \cong M_{k(j)}(\C)$, where $k(1) = k(2) = 1$ and $k(j) = 2^{j-1}$, for $j \ge 2$. Hence $A/I$ is finite dimensional, for each non-zero primitive ideal $I$ of $A$. It now follows from Theorem~\ref{thm:types} that $A$ is just-infinite and RFD, as desired. The characteristic sequence of $A$ is precisely the sequence $\{k(j)\}_{j=1}^\infty$ defined above.

One can modify the Bratteli diagram in various ways to construct new RFD just-infinite AF-algebras with other characteristic sequences. For example, one can delete the first $n-1$ rows and let row $n$ correspond to an arbitrary finite dimensional \Cs{} with $n$ summands. (The remaining finite dimensional \Cs s are then determined by the one chosen and by the Bratteli diagram.) One is also allowed to change the multiplicity of the edges connecting the vertex at position $(n,k)$, $1 \le k \le n$, to the vertex at position $(n+1,n+1)$.  In these examples, the characteristic sequences all grow exponentially. By Proposition~\ref{prop:char-seq-infty}, we know that they must tend to infinity. This leaves open the following:

\begin{question} What are the possible characteristic sequences $\{k_j\}_{j=1}^\infty$ of RFD just-infinite \Cs s? 
\end{question}

\subsection{The dimension group}

We compute the dimension group $(K_0(A), K_0(A)^+, [1])$ of the just-infinite AF-algebra $A$ constructed above (associated with the Bratteli diagram \eqref{eq:Bratteli}). 

Recall that the dimension group, $(H,H^+,v)$, associated with the Bratteli diagram \eqref{eq:Bratteli} is the inductive limit of the ordered abelian groups
\begin{eqnarray*}
&\xymatrix{ \Z \ar[r]^-{\alpha_1} & \Z^2 \ar[r]^-{\alpha_2} &  \Z^3 \ar[r]^-{\alpha_3} & \cdots,}\\
&\alpha_n(x_1,x_2, \dots,x_n) = (x_1,x_2, \dots, x_n,x_1+x_2+ \cdots + x_n), \qquad (x_1, \dots, x_n) \in \Z^n,
\end{eqnarray*}
where $v \in H^+$ is the image of $1$ in the first copy of $\Z$. It follows from standard theory of AF-algebras that $(K_0(A), K_0(A)^+, [1])$ is isomorphic to $(H,H^+,v)$. We proceed to identify the latter more explicitly.

Let $\prod_{j \in \N} \Z$ denote the (uncountable) group of all sequences $x=\{x_j\}_{j=1}^\infty$ of integers, equipped with the usual order: $x \ge 0$ if and only if $x_j \ge 0$, for all $j \ge 1$. Let $G$ be the countable subgroup of $\prod_{j \in \N} \Z$ consisting of those sequences $\{x_j\}_{j=1}^\infty$ for which the identity $x_{j+1} = x_1+x_2 + \cdots + x_j$ holds eventually, and equip $G$ with the order inherited from $\prod_{j \in \N} \Z$.
Set $u = (1,1,2,4,8, \cdots)$. We show below that $(H,H^+,v) \cong (G,G^+,u)$. In conclusion, 
$$(K_0(A),K_0(A)^+, [1])  \cong (H,H^+,v) \cong (G,G^+,u).$$

For this, define $\rho_n \colon \Z^n \to G$ by
$$\rho_n(x_1,x_2, \dots, x_n) = (x_1, x_2, \dots, x_n, x_{n+1}, x_{n+2}, \dots),$$
where $x_{j+1} = x_1+x_2 + \cdots + x_j$, for all $j \ge n$. Then $\rho_{n+1} \circ \alpha_n = \rho_n$, for all $n$, and each $\rho_n$ is positive. It follows that the $\rho_n$'s extend to a  positive group homomorphism $\rho \colon H \to G$. Each $\rho_n$ is injective, so $\rho$ is injective. 

To complete the proof that $\rho$ is an order isomorphism, we show that $\rho(H^+) = G^+$. Take $x=\{x_j\}_{j=1}^\infty  \in G^+$, and let $n \ge 1$ be such that $x_{j+1} = x_1+x_2 + \cdots + x_j$, for all $j \ge n$. Then 
\begin{eqnarray*}
x &=& \rho_n(x_1,x_2, \dots, x_n) \\ &=&x_1 \rho_n(e_1^{(n)}) +  x_2 \rho_n(e_2^{(n)})  + \cdots +  x_n \rho_n(e_n^{(n)})\\
&=& x_1 \rho(f_1^{(n)})+x_2\rho(f_2^{(n)}) + \cdots + x_n \rho(f_n^{(n)}),
\end{eqnarray*}
where  $e_1^{(n)}, e_2^{(n)}, \dots, e_n^{(n)}$ is the standard basis for $(\Z^n)^+ \subseteq \Z^n$, and  $f_1^{(n)}, f_2^{(n)}, \dots, f_n^{(n)}$  are the corresponding images in $H^+  \subseteq H$. This shows that $x \in \rho(H^+)$. Finally, $\rho(v) = \rho_1(1) = u$, as wanted.

\bigskip\noindent
Unital AF-algebras are completely classified by their ordered $K_0$-group, together with the position of the class of the unit. It is therefore an interesting question to classify, or characterize, those dimension groups which are the $K_0$-group of a RFD just-infinite AF-algebra.

In the light of the computation above, one may first wish to consider those dimension groups $G$ which are (ordered) subgroups of $\prod_{j=1}^\infty \Z$. In addition, one should assume that $G$ is a subdirect product of  $\prod_{j=1}^\infty \Z$, in the sense that $\varphi_F(G) = \prod_{j \in F} \Z$, for each finite subset $F$ of $\N$, where $\varphi_F$ is the canonical projection of $\prod_{j =1}^\infty \Z$ onto $\prod_{j \in F} \Z$. The dimension group considered above has this property.

\subsection{A strictly RFD \Cs{} which is not just-infinite} \label{subsec:SRFDvsJI}

\noindent It was shown in Corollary~\ref{cor:strictRFD} that all RFD just-infinite \Cs s are strictly RFD. We show here that the converse does not hold, by constructing an example of a unital AF-algebra which is strictly RFD and not just-infinite.

Let us first describe the example at the level of its primitive ideal space. Let $X$ be the disjoint union of  two copies of $Y_\infty$, i.e., $X= X_1 \amalg X_2$, where $X_1 = X_2 = Y_\infty$. Equip $X$ with the following topology: A non-empty subset $U$ of $X$ is open if and only if $U \cap X_1$ is non-empty and open, and $U \cap X_2$ is open. That this indeed defines a topology on $X$ follows from the fact that the intersection of any two non-empty open subsets of $X_1$ is non-empty, or, equivalently, that the set $X_1$ is prime.

Observe that $X_2$ is an infinite closed subset of $X$. Hence $X_2$ is a non-dense infinite subset of $X$. This shows that $X$ cannot be the primitive ideal space of a just-infinite \Cs; cf.\ Lemma~\ref{lm:Prim-ji}. The set $X_1$, on the other hand, is an open and dense subset of $X$, and each infinite subset of $X_1$ is dense in $X_1$, and therefore also dense in $X$. 

The space $X$ is the primitive ideal space of the unital AF-algebra $B$ whose Bratteli diagram is given as follows 
%color
(ignoring at first the coloring of the vertices and edges): %choose this for colored
%(ignoring at first the shading of the edges and vertices): %choose this for black and white

$$
\xymatrix@R-1.3pc{\rbullet \redarrow[dd] \redarrow[ddr]& & & & &  &&&&& \rbullet \bluearrow[dd] \redarrow[ddl] \redarrow[ddlllllllll]\\ 
&&&&&&&&&&\\
\rbullet \bluearrow[dd] \redarrow[ddrr]& \rbullet  \bluearrow[dd] \redarrow[ddr]&& && &&&& \rbullet \bluearrow[dd]  \bluearrow[ddl] \redarrow[ddlllllll]& \bbullet \bluearrow[dd] \bluearrow[ddll]
\\ &&&&&&&&&&\\
\bbullet  \bluearrow[dd] \bluearrow[ddrrr]& \bbullet  \bluearrow[dd] \bluearrow[ddrr]& \rbullet  \redarrow[dd] 
\bluearrow[ddr]& && & & & \bbullet \bluearrow\bluearrow[dd] \bluearrow[ddl] \bluearrow[ddlllll] & \bbullet \bluearrow[dd] \bluearrow[ddll] & \bbullet \bluearrow[dd] \bluearrow[ddlll] \\ 
&&&&&&&&&& \\
\bbullet  \bluearrow[dd] \bluearrow[ddrrrr] & \bbullet  \bluearrow[dd] \bluearrow[ddrrr] & \rbullet  \redarrow[dd] \bluearrow[ddrr]& \bbullet  \bluearrow[dd] \bluearrow[ddr]&&&& \bbullet \bluearrow[dd] \bluearrow[ddl] \bluearrow[ddlll] & \bbullet \bluearrow[dd] \bluearrow[ddll] & \bbullet \bluearrow[dd] \bluearrow[ddlll] & \bbullet \bluearrow[dd] \bluearrow[ddllll]
\\ &&&&&&&&&& \\
\bbullet  & \bbullet   & \rbullet   & \bbullet  & \bbullet& &\bbullet  & \bbullet   & \bbullet   & \bbullet  & \bbullet\\
\vdots & \vdots & \vdots & \vdots & \vdots &  & \vdots & \vdots & \vdots & \vdots & \vdots
} 
$$

The left-hand half of this Bratteli diagram is an essential ideal in the Bratteli diagram\footnote{An ideal $U$ in a Bratteli diagram is said to be \emph{essential}, if $U \cap V \ne \emptyset$, for all non-empty ideals $V$.} and therefore corresponds to an essential ideal $I$ of the AF-algebra $B$. The right-hand half is the Bratteli diagram of the quotient $B/I$. Hence $B/I$ is equal to the RFD just-infinite  AF-algebra $A$ described in Section~\ref{subsec:A}, and $I$ is Morita equivalent to $A$. Hence $B$ cannot be just-infinite. For each $k \ge 1$, let $U_k$ be largest ideal of the Bratteli diagram which does not contain any vertex from the $k$th column of the left-hand half of the Bratteli diagram. Furthermore, let $I_k$ be the ideal of $B$ corresponding to the ideal $U_k$. 

To illustrate this definition, in the diagram above, the ideal $U_3$ is 
%color
marked in blue % choose this for colored
%marked with dotted lines and open vertices %choose this for black and white
and the Bratteli diagram of the quotient $B/I_3$ is
marked in red % choose this for colored
%marked with bold lines and filled vertices %choose this for blakc and white
The quotient $B/I_3$ is seen to be isomorphic to $M_4(\C)$. 

In general, for each $k \ge 1$, we see that $B/I_k$ is a full matrix algebra, and (hence) that each $I_k$ is a primitive ideal.  Moreover, $\bigcap_{k \in T} I_k = 0$, for each infinite subset $T$ of $\N$. (To see this, observe that $U_k$ contains no vertices from the top $k-1$ rows of the left-hand half of the Bratteli diagram, or from the top $k-2$ rows of the right-hand half. Hence $\bigcap_{k \in T} U_k = \emptyset$, for each infinite subset $T$ of $\N$.)

This shows that $B$ is a strictly RFD AF-algebra which is not just-infinite.

\section{Subalgebras and superalgebras} \label{sec:subalgebras}

\noindent In this section, which is addressed to specialists in \Cs s, we investigate when subalgebras and superalgebras of just-infinite \Cs s are again just-infinite, and we also show that not all RFD just-infinite \Cs s are nuclear, or even exact. The third named author thanks Jose Carrion for his suggestion to use Theorem~\ref{thm:Dad} below of Dadarlat to conclude that there are non-nuclear, and even non-exact, RFD just-infinite \Cs s. 

Recall that a \Cs{} $A$ has \emph{real rank zero} if each self-adjoint element in $A$ is the norm limit of self-adjoint elements in $A$ with finite spectra. A commutative \Cs{} $C(X)$ has real rank zero if and only if $X$ is totally disconnected (or, equivalently, $\mathrm{dim}(X) = 0$). Real rank zero is therefore viewed as a non-commutative analog of being zero-dimensional. A \Cs{} has real rank zero if it has ``sufficiently many projections''. Each closed two-sided ideal of a \Cs{} of real rank zero again has real rank zero and, as a consequence, is generated by its projections.

We denote by $\Ideal(A)$ the lattice of closed two-sided ideals in $A$. If $B$ is a sub-\Cs{} of $A$, then there is a natural map $\Phi \colon \Ideal(A) \to \Ideal(B)$, given by $\Phi(I) = I \cap B$. The map $\Phi$ is, in general, neither injective nor surjective, but it is both in the special situation of the lemma below. We use the symbol $p \sim_A q$ to denote that $p$ and $q$ are Murray-von Neumann equivalent projections, relatively to the \Cs{} $A$.

\begin{lemma} \label{lm:A-B-ideals} Let $B \subseteq A$ be unital \Cs s of real rank zero, and suppose that
there is a \sh{} $\kappa \colon A \to B$ such that $\kappa(p) \sim_A p$,  for all projections $p \in A$, and $\kappa(q) \sim_B q$, for all projections $q \in B$. Then the map $\Phi \colon \Ideal(A) \to \Ideal(B)$  is a lattice isomorphism. 
\end{lemma}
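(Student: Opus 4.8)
The plan is to exploit that $\Phi(I)=I\cap B$ and to construct an explicit two-sided inverse using the $^*$-homomorphism $\kappa$, building on the fact that ideals in real rank zero \Cs s are generated by their projections. First I would record the basic facts: in a \Cs{} of real rank zero every closed two-sided ideal $I$ is the closed linear span of the projections it contains, so $I$ is completely determined by the set $P(I)=\{p : p \text{ a projection in } I\}$; moreover, for such algebras, a projection $p$ lies in $I$ if and only if some (equivalently, any) projection Murray--von Neumann equivalent to $p$ lies in $I$, since equivalent projections generate the same ideal. I will also note that $\Phi$ is always a lattice homomorphism (intersection with a fixed subalgebra preserves inclusions, sums and intersections of ideals once one checks sums, which follows from $I\cap B$ being generated by projections and $\kappa$-type arguments), and that $B$ has real rank zero by hypothesis so the same projection-picture applies to $\Ideal(B)$.

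Next I would define the candidate inverse $\Psi \colon \Ideal(B) \to \Ideal(A)$ by sending $J \in \Ideal(B)$ to the closed two-sided ideal $\Psi(J)$ of $A$ generated by $\kappa^{-1}(J)$ — equivalently, generated by $\{p \in A : p \text{ a projection}, \kappa(p) \in J\}$. The key point making this work is the hypothesis $\kappa(p)\sim_A p$ for projections $p\in A$ and $\kappa(q)\sim_B q$ for projections $q \in B$. To show $\Phi\circ\Psi = \mathrm{id}_{\Ideal(B)}$: given $J\in\Ideal(B)$, a projection $q\in B$ lies in $\Phi(\Psi(J))=\Psi(J)\cap B$ iff $q\in\Psi(J)$; since $\kappa(q)\sim_B q$ and $\kappa(q)\in J$ iff $q$ is among the generators of $\Psi(J)$, one checks $q\in\Psi(J)$ iff $\kappa(q)\in J$ iff (using $q\sim_B\kappa(q)$ and that $J$ is closed under equivalence) $q\in J$; since both $\Phi(\Psi(J))$ and $J$ are real rank zero ideals determined by their projections, equality follows. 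To show $\Psi\circ\Phi=\mathrm{id}_{\Ideal(A)}$: given $I\in\Ideal(A)$, a projection $p\in A$ lies in $\Psi(\Phi(I))$ iff $\kappa(p)\in\Phi(I)=I\cap B$, i.e.\ iff $\kappa(p)\in I$; but $\kappa(p)\sim_A p$ and $I$ is closed under Murray--von Neumann equivalence, so $\kappa(p)\in I$ iff $p\in I$; again both ideals are generated by their projections, so they coincide. Finally, both $\Phi$ and $\Psi$ are order-preserving, and a bijective order-preserving map between lattices whose inverse is also order-preserving is a lattice isomorphism.

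The main obstacle I anticipate is the careful verification that the relevant sets of projections genuinely determine the ideals and that the equivalences can be used as claimed — specifically two technical points. First, one must know that in a real rank zero \Cs{} an ideal contains a projection $p$ precisely when it contains \emph{all} projections equivalent to $p$; this is standard (if $p=v^*v$ and $vv^*\in I$ then $p = v^*(vv^*)v\in I$), but it must be invoked cleanly for both $A$ and $B$. Second, when $\Phi(I) = I\cap B$ one needs that $I\cap B$ is itself generated by its projections as an ideal \emph{of $B$}; this is where real rank zero of $B$ (not merely of $A$) is essential, together with the observation that $I\cap B$ is a closed two-sided ideal of $B$ and hence, $B$ having real rank zero, is the closed span of its projections. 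I would also double-check that $\Psi(J)$, built from projections $p$ with $\kappa(p)\in J$, really is the span of \emph{its own} projections — but this is automatic since $A$ has real rank zero and $\Psi(J)$ is a closed two-sided ideal of $A$. With these lemmas in hand the rest is bookkeeping: verify $\Phi,\Psi$ are mutually inverse on the level of projection-sets, lift to ideals via the real-rank-zero correspondence, and note order-preservation on both sides.
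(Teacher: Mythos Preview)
Your argument is correct and follows essentially the same strategy as the paper: both proofs exploit that in real rank zero \Cs s ideals are determined by their projections, and both use the two equivalence hypotheses on $\kappa$ to transfer projections between $A$ and $B$. The only cosmetic difference is that the paper shows $\Phi$ is injective directly and then proves surjectivity by checking $\Phi(\overline{AJA})=J$, whereas you package the same computations as an explicit two-sided inverse $\Psi$; note that your $\Psi(J)$, being the ideal of $A$ generated by $\kappa^{-1}(J)$, is simply $\kappa^{-1}(J)$ itself (preimages of ideals under \sh s are ideals), which streamlines your verification. Your aside that $\Phi$ preserves sums is not needed and not obviously true without further argument, but you correctly observe at the end that order-preservation in both directions suffices, so this does not affect the proof.
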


\begin{proof} We first show that $\Phi$ is injective. Let $I \ne I' \in \Ideal(A)$ be given. Since $A$ has real rank zero, and ideals in $A$ are generated by their projections, there exists a projection $p \in I$ such that $p \notin I'$ (or vice versa). Set $q = \kappa(p) \sim p$. Then $q \in I \cap B = \Phi(I)$, but $q \notin I' \cap B = \Phi(I')$. Hence $\Phi(I) \ne \Phi(I')$.

Let now $J \in \Ideal(B)$ be given, and let $I = \overline{AJA}$ be the closed two-sided ideal in $A$ generated by $J$. Then, clearly, $J \subseteq I \cap B = \Phi(I)$. To see that $\Phi(I) \subseteq J$, it suffices to show that each projection $q$  in $\Phi(I)$ belongs to $J$. Being a projection in $I$, $q$ belongs to the algebraic two-sided ideal in $A$ generated by $J$, so $q = \sum_{j=1}^n a_j x_j b_j$ for some $a_j,b_j \in A$ and $x_j \in J$. The conditions on $\kappa$, together with the fact that $B$ is a \Cs{} of real rank zero, imply that $\kappa$ maps $J$ into itself, so 
$$q \sim_B \kappa(q) = \sum_{j=1}^n \kappa(a_j) \kappa(x_j) \kappa(b_j) \in J.$$
This shows that $q$ belongs to $J$, as desired.
\end{proof}

\begin{lemma} \label{lm:ji-subalgebra} Let $A$ be a unital separable RFD just-infinite \Cs{} of real rank zero, and let  $\{\pi_n\}_{n=1}^\infty$ be an exhausting\footnote{See Remark~\ref{rem:add-structure}.} sequence of pairwise inequivalent non-faithful irreducible representations of $A$. 
\begin{enumerate}
\item Suppose that  $B$ is a unital sub-\Cs{} of  $A$ such that the map $\Phi \colon \Ideal(A) \to \Ideal(B)$ is an isomorphism, and such that each projection in $A$ is equivalent to a projection in $B$. It follows that $B$ is just-infinite and RFD, that $\{\pi_n|_{B}\}_{n=1}^\infty$ is an exhausting sequence of pairwise inequivalent non-faithful irreducible representations of $B$, and that $\pi_n(B) = \pi_n(A)$, for all $n$. In particular, $A$ and $B$ have the same characteristic sequence. 
\item Suppose that  $C$ is a unital \Cs{} of real rank zero which contains $A$ and is asymptotically homotopy equivalent\footnote{This means that there exists an asymptotic morphism $C \to A$, so that the asympotic morphism $C \to C$ (obtained by composing it with the inclusion mapping $A \to C$) is homotopic to the identity on $C$ in the category of asymptotic morphism. See also \cite{Dad:non-exact}.}
to $A$. Suppose also that $\Phi \colon \Ideal(C) \to \Ideal(A)$ is an isomorphism.\footnote{In fact, the assumptions on $A$ and $C$  imply that $\Phi$ is an isomorphism. This can be shown along the same lines as the proof of Lemma~\ref{lm:A-B-ideals}.} It follows that $C$ is just-infinite and RFD with an exhausting sequence $\{\nu_n\}_{n=1}^\infty$ of pairwise  inequivalent non-faithful irreducible representations for which $\Ker(\nu_n|_A) = \Ker(\pi_n)$ and $\nu_n(C) \cong \pi_n(A)$, for all $n$. In particular, $A$ and $C$ have the same characteristic sequence. 
\end{enumerate}
\end{lemma}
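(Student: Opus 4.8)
The plan is to reduce everything to the ideal-lattice isomorphism $\Phi$ together with the observation that passing to a unital subalgebra (or superalgebra) with isomorphic ideal lattice preserves the properties of $\Prim$ catalogued in Lemma~\ref{lm:Prim-ji} and Theorem~\ref{thm:types}~($\gamma$). For part~(i): since $\Phi\colon\Ideal(A)\to\Ideal(B)$ is a lattice isomorphism and $A$ is RFD just-infinite, I would first transport the relevant structural facts. A lattice isomorphism of $\Ideal$'s induces a homeomorphism of the (spectral) primitive ideal spaces when both algebras are separable, because $\Prim$ is recovered from $\Ideal$ as the set of meet-irreducible elements (equivalently: via Proposition~\ref{prop:Prim1} and the spectral-space correspondence in Section~2). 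Hence $\Prim(B)\cong\Prim(A)\cong Y_\infty$, all of whose infinite subsets are dense. Next, for a non-zero primitive ideal $\Ker(\pi_n)$ of $A$, its image $\Ker(\pi_n)\cap B$ is a non-zero primitive ideal of $B$ (non-zero because $\Phi$ is injective and $\Ker(\pi_n)\ne 0$), and it is maximal since $\{Ker(\pi_n)\}$ is closed in $Y_\infty$, so $\{\Ker(\pi_n)\cap B\}$ is closed in $\Prim(B)$. Thus $B/(\Ker(\pi_n)\cap B)$ is a simple quotient; I then need it to be \emph{finite-dimensional}. Here is where the projection hypothesis enters: $B/(\Ker(\pi_n)\cap B)$ embeds as a unital subalgebra of the matrix algebra $A/\Ker(\pi_n)=\pi_n(A)$, hence is finite-dimensional, and being simple it equals $\pi_n(A)$ on the nose once we know the embedding is onto. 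Surjectivity follows because each projection in $A$ — in particular a rank-one projection in $\pi_n(A)$, lifted to a projection of $A$ — is equivalent in $A$ to a projection of $B$, and $\pi_n$ carries this equivalence into $\pi_n(B)$, forcing $\pi_n(B)$ to contain a rank-one projection of the matrix algebra; a unital $*$-subalgebra of $M_k(\mathbb C)$ containing a rank-one projection is all of $M_k(\mathbb C)$. So $\pi_n(B)=\pi_n(A)$, $\pi_n|_B$ is irreducible, and the $\pi_n|_B$ are pairwise inequivalent since they have pairwise distinct kernels $\Ker(\pi_n)\cap B$. Finally, applying Theorem~\ref{thm:types}~($\gamma$)~(iii) (infinite $\Prim$, all infinite subsets dense, all proper non-zero quotients finite-dimensional) to $B$ gives that $B$ is RFD just-infinite, the $\{\pi_n|_B\}$ form an exhausting sequence, and the characteristic sequence is the same.

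For part~(ii), the strategy is the mirror image, now going up. The assumption that $\Phi\colon\Ideal(C)\to\Ideal(A)$ is an isomorphism (restriction of ideals) again yields a homeomorphism $\Prim(C)\cong\Prim(A)\cong Y_\infty$; in particular $C$ is primitive (the zero ideal is meet-irreducible) and every non-zero primitive ideal $K$ of $C$ is maximal. I must show $C/K$ is finite-dimensional for each such $K$. Write $K\cap A = \Ker(\pi_n)$ for the corresponding non-zero primitive ideal of $A$. Then $A/(K\cap A)=\pi_n(A)$ sits as a (non-unital, in general) subalgebra of $C/K$, but more usefully $C/K$ is a quotient of $C$ with $\Prim(C/K)$ a single closed point, and the inclusion $A\hookrightarrow C$ induces $A/(K\cap A)\to C/K$ with the same kernel on $A$. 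To control the size of $C/K$ I invoke the asymptotic homotopy equivalence: composing the representation $\nu$ of $C$ with kernel $K$ with an asymptotic morphism $C\to A$ and then with $\pi_n$ (or using that the asymptotic morphism $C\to A\hookrightarrow C$ is homotopic to $\mathrm{id}_C$) shows that $\nu$ factors, up to homotopy, through a finite-dimensional representation; more concretely, $K$-theoretically and via the real-rank-zero hypothesis, every projection of $C$ is equivalent to one coming from $A$ (this is the content of the footnote: the argument of Lemma~\ref{lm:A-B-ideals} applies with the asymptotic morphism playing the role of $\kappa$), so $C/K$ is generated by projections each equivalent to a projection in the image of $A/(K\cap A)=\pi_n(A)=M_{k_n}(\mathbb C)$, whence $C/K$ is finite-dimensional and, being simple, isomorphic to a matrix algebra containing $M_{k_n}(\mathbb C)$; an argument with traces/ranks as above pins it down to $\cong\pi_n(A)$. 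Then Theorem~\ref{thm:types}~($\gamma$) applied to $C$ finishes it, with $\nu_n$ the irreducible representation of $C$ with kernel the primitive ideal of $C$ over $\Ker(\pi_n)$, so that $\Ker(\nu_n|_A)=\Ker(\pi_n)$ and $\nu_n(C)\cong\pi_n(A)$.

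**Main obstacle.** The routine part is the $\Prim$-bookkeeping: a lattice isomorphism of ideals transports being $Y_\infty$, and Theorem~\ref{thm:types}~($\gamma$) does the rest. The genuine content — and the step I expect to be hardest to write cleanly — is showing each proper non-zero quotient is \emph{finite-dimensional} with the \emph{correct} dimension. In part~(i) this is the surjectivity $\pi_n(B)=\pi_n(A)$, which uses that a unital $*$-subalgebra of $M_k(\mathbb C)$ containing a rank-one projection is everything, together with the projection-lifting hypothesis. In part~(ii) the analogous statement is more delicate because the asymptotic homotopy equivalence only gives $K$-theoretic / homotopy-level information; one has to combine it (in the style of Lemma~\ref{lm:A-B-ideals}, as the authors hint) with real rank zero to conclude that projections of $C/K$ are, up to equivalence, images of projections of $\pi_n(A)$, and thus that $C/K$ is finite-dimensional of the right size rather than merely finite-dimensional or, worse, only quasidiagonal.
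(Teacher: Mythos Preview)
Your plan for part~(i) is essentially the paper's argument: transport $\Prim(A)\cong Y_\infty$ to $B$ via the lattice isomorphism, note $\pi_n(B)\subseteq\pi_n(A)$ is finite-dimensional, and then use the projection hypothesis to get surjectivity. One slip: the assertion ``a unital $*$-subalgebra of $M_k(\C)$ containing a rank-one projection is all of $M_k(\C)$'' is false as stated (e.g.\ $\C\oplus M_{k-1}(\C)\subset M_k(\C)$). What is true, and what the paper uses, is that both $\pi_n(B)$ and $\pi_n(A)$ are already known to be \emph{full matrix algebras} (since $\pi_n(B)\cong B/(\Ker\pi_n\cap B)$ is simple and finite-dimensional), and a unital inclusion $M_m(\C)\hookrightarrow M_k(\C)$ hitting a rank-one projection forces $m=k$. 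You have simplicity in hand, so just invoke it at that step. Also make explicit that real rank zero of $A$ is what lets you lift the minimal projection from $\pi_n(A)$ to $A$.

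For part~(ii) your outline is in the right direction but the key step is not yet an argument. Saying ``$C/K$ is generated by projections each equivalent to a projection in the image of $\pi_n(A)$, whence $C/K$ is finite-dimensional'' does not follow: being generated by projections with finitely many equivalence classes does not by itself bound the dimension, and ``an argument with traces/ranks'' needs to be made precise. The paper's proof is a clean contradiction that you should adopt: pick an irreducible representation $\nu_n\colon C\to B(H)$ with kernel $J_n$ (where $J_n\cap A=\Ker\pi_n$), so $\pi_n(A)\cong M_k(\C)\hookrightarrow B(H)$; assume $\dim H>k$ and choose $k+1$ pairwise orthogonal non-zero projections $f_1,\dots,f_{k+1}$ in $\nu_n(C)$ (possible since $\nu_n(C)$ is either $B(H)$ or infinite-dimensional of real rank zero); lift them to orthogonal projections $p_1,\dots,p_{k+1}$ in $C$ by real rank zero; apply the asymptotic morphism $C\to A$ and use semiprojectivity of $\C^{k+1}$ to get orthogonal projections $q_1,\dots,q_{k+1}$ in $A$ with $q_j\sim_C p_j$ (this is where the homotopy to $\mathrm{id}_C$ enters); then $\pi_n(q_j)\sim\nu_n(p_j)=f_j\ne 0$, giving $k+1$ orthogonal non-zero projections in $M_k(\C)$, a contradiction. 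This simultaneously proves finite-dimensionality and the exact dimension, which your sketch treats as two separate (and both incomplete) steps.
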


\begin{proof} (i). The lattice isomorphism $\Phi \colon \Ideal(A) \to \Ideal(B)$ restricts to a homeomorphism $\Prim(A) \to \Prim(B)$, and so $\Prim(B)$ is homeomorphic to $\Prim(A)$, which again is homeomorphic to $Y_\infty$.  Moreover, 
 $$\Prim(B) \setminus \{0\} = \{ \Ker(\pi_n) \cap B \mid n \in \N\}= \{ \Ker(\pi_n|_B) \mid n \in \N\}.$$

Let $I$ be a non-zero primitive ideal of $B$. Then $I = \Ker(\pi_n|_B)$, for some $n$, and $B/I$ is isomorphic to $\pi_n(B)$, which is a subalgebra of the finite dimensional \Cs{} $\pi_n(A)$, so $B/I$ is finite dimensional. It now follows from Theorem~\ref{thm:types} that $B$ is just-infinite.

Let us also show that $\pi_n(B) = \pi_n(A)$, for all $n$. Since $\pi_n(B) \subseteq \pi_n(A)$ and both \Cs s are full matrix algebras, it  suffices to show that $\pi_n(B)$ contains a minimal projection in $\pi_n(A)$. Let $e \in \pi_n(A)$ be such a projection and lift it to a projection $p \in A$ (which is possible because $A$ is assumed to have real rank zero). Find a projection $q \in B$ which is equivalent to $p$. Then $\pi_n(q)$ is equivalent to $e$, which implies that $\pi_n(q)$ itself is a minimal projection in $\pi_n(A)$.

(ii). As in (i), the given lattice isomorphism $\Phi \colon \Ideal(C) \to \Ideal(A)$ restricts to a  homeomorphism $\Prim(C) \to \Prim(A)$, so $\Prim(C)$ is homeomorphic to $Y_\infty$. 

Given $n \ge 1$, let $J_n \in \Prim(C)$ be such that $\Phi(J_n) = \Ker(\pi_n)$. Since $\Phi$ is an isomorphism, each non-zero primitive ideal in $C$ is of this form. Identify $\pi_n(A)$ with $M_k(\C)$, for some positive integer $k$. Find an irreducible representation $\nu_n \colon C \to B(H)$ on some Hilbert space $H$, with $\Ker(\nu_n) = J_n$. Then $\Ker(\pi_n) = \Phi(J_n) =J_n \cap A = \Ker(\nu_n|_A)$. Let $\iota \colon M_k(\C) \to B(H)$ be the inclusion mapping making the following diagram commutative:
$$\xymatrix{A \ar@{^{(}->}[r]  \ar[d]_-{\pi_n} & C  \ar[d]^-{\nu_n} \\
M_k(\C) \ar@{^{(}->}_\iota[r] & B(H)}$$
We show that $\dim(H) = k$, which by Theorem \ref{thm:types}, will imply that $C$ is just-infinite. It will also imply that  $\iota$ is an isomorphism, and that $\pi_n(A) \cong \nu_n(A) = \nu_n(C) = B(H)$.

It is clear that $\dim(H) \ge k$. Suppose that $\dim(H) > k$. Then we can find pairwise orthogonal non-zero projections $f_1,f_2, \dots, f_{k+1}$ in $\nu_n(C)$. (Indeed, $\nu_n(C)$ acts irreducibly on $H$, so if $\dim(H)$ is finite, then $\nu_n(C) = B(H)$. If $\dim(H)$ is infinite, then $\nu_n(C)$ is infinite dimensional and of real rank zero. In either case, one can find the desired projections.) 
Since $C$ has real rank zero, we can lift the projections $f_1, f_2, \dots, f_{k+1}$ to mutually orthogonal projections $p_1,p_2, \dots, p_{k+1}$ in $C$. Applying the asymptotic homomorphism $C \to A$ to the projections $p_1,p_2, \dots, p_{k+1}$,  and using that $\C^{k+1}$ is semiprojective (see \cite{Bla:semiprojective}), we obtain mutually orthogonal projections $q_1,q_2, \dots, q_{k+1}$ in $A$. Since the asymptotic homomorphism $C \to A$  composed with the inclusion mapping $A \to C$ is homotopic to the identity mapping on $C$, we further get  that $q_j$ is equivalent to $p_j$, for each $j$. In particular, $(\iota \circ \pi_n)(q_j) = \nu_n(q_j) \sim  \nu_n(p_j) = f_j$, for each $j$, so $\pi_n(q_j)$ is non-zero. But $M_k(\C)$ does not contain $k+1$ mutually orthogonal non-zero projections. This proves that $\dim(H) = k$. 
\end{proof}

\noindent We shall combine Lemma~\ref{lm:ji-subalgebra} with the following results due to Dadarlat:

\begin{theorem}[{Dadarlat, \cite[Theorem 11 and Proposition 9]{Dad:non-exact}}]  \label{thm:Dad}
Let $A$ be a unital AF-algebra not of type I. Then:
\begin{enumerate}
\item $A$ contains a unital non-nuclear sub-\Cs{} $B$ of real rank zero and stable rank one, for which there exists a unital $^*$-monomorphism $\kappa \colon A \to B$ such that $\iota \circ \kappa$ is homotopic to $\Id_A$ and $\kappa \circ \iota$ is asymptotically homotopic to $\Id_B$, where $\iota$ is the inclusion mapping $B \to A$. Moreover, $\Phi \colon \Ideal(A) \to \Ideal(B)$ is an isomorphism of lattices.
\item $A$ is contained in a unital separable non-exact \Cs{} $C$ of real rank zero and stable rank one, which is asymptotically homotopy equivalent to $A$, and for which $\Phi \colon \Ideal(C) \to \Ideal(A)$ is an isomorphism of lattices.
\end{enumerate}
\end{theorem}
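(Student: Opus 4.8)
The statement is Dadarlat's, so the plan is to reconstruct the strategy of \cite{Dad:non-exact}, which realizes $B$ and $C$ as inductive limits in which the finite-dimensional building blocks of a Bratteli diagram for $A$ are interleaved with fixed non-nuclear (respectively non-exact) residually finite dimensional building blocks of real rank zero and stable rank one. First I would put $A$ into a normal form: since $A$ is a unital AF-algebra which is not of type I, a suitable telescoping of a Bratteli diagram $A_1 \to A_2 \to \cdots$ for $A$ (with each $A_n$ finite dimensional and all connecting maps unital) can be chosen so that every partial multiplicity occurring in it is as large as we please. This ``room'' in the diagram is exactly what the non-type-I hypothesis provides, and it is what allows non-nuclear pieces to be inserted without disturbing the homotopy type.

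Next I would isolate the non-nuclear building blocks. One needs a single unital separable non-nuclear \Cs{} $D$ (for (ii), a non-exact one) that is RFD, of real rank zero and stable rank one, is semiprojective, and that admits, for each pair of matrix sizes occurring in the diagram, $^*$-homomorphisms to and from the relevant $A_n$ which are homotopic (respectively asymptotically homotopic) to the connecting maps of the diagram. The prototypes are matrix amplifications of subalgebras of full group \Cs s of free groups --- which are RFD by Choi's theorem and non-nuclear --- suitably modified following \cite{Dad:non-exact} to acquire real rank zero, stable rank one, and the requisite semiprojectivity and $K$-theory. Then I would assemble the mapping telescope: an interleaved system $A_1 \to D_1 \to A_2 \to D_2 \to \cdots$ in which each composite $A_n \to D_n \to A_{n+1}$ is homotopic to the original connecting map. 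Its limit $B = \varinjlim A_n = \varinjlim D_n$ contains $A$ unitally via $\iota$, and the maps $A_n \to D_n$ induce a unital $^*$-monomorphism $\kappa \colon A \to B$; semiprojectivity of the building blocks is what upgrades ``homotopic at each finite stage'' to $\iota \circ \kappa$ being homotopic to $\Id_A$ and $\kappa \circ \iota$ being asymptotically homotopic to $\Id_B$. Real rank zero and stable rank one pass to inductive limits, so $B$ inherits them; and the ideal-lattice isomorphism $\Phi \colon \Ideal(A) \to \Ideal(B)$ follows exactly as in Lemma~\ref{lm:A-B-ideals}, since these homotopies force $\kappa$ to preserve $K_0$-classes --- hence, by cancellation in the stable-rank-one algebras $A$ and $B$, Murray--von Neumann equivalence classes --- of projections. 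For (ii) one runs the analogous interleaving with $A$ as a \emph{subalgebra} of the limit $C$; there $C$ is non-exact because it contains a non-exact $D_n$ and exactness passes to subalgebras. For (i), note that $B \subseteq A$ is automatically exact, so its non-nuclearity cannot be detected through any ideal or quotient; instead it is seen in $K$-theory with $\Z/p$-coefficients, which is Dadarlat's device: $B$ is arranged so as to violate the constraint that $K$-theory with coefficients imposes on exact \emph{nuclear} \Cs s, a constraint the free-group pieces fail.

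The main obstacle is the second step: producing the non-nuclear (and non-exact) building block $D$ with real rank zero, stable rank one, semiprojectivity, the correct $K$-theory, and $^*$-homomorphisms to the finite-dimensional pieces that are homotopic to the diagram's connecting maps. This is the technical heart of \cite{Dad:non-exact}; the rest is bookkeeping with inductive limits together with the permanence properties of real rank zero, stable rank one, exactness and nuclearity. It is in this step, and essentially only there, that the hypothesis that $A$ is not of type I is genuinely used.
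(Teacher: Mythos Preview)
The paper does not prove this theorem at all: it is quoted as a black box from Dadarlat \cite[Theorem 11 and Proposition 9]{Dad:non-exact}. The only supplementary remark the paper makes is that the ideal-lattice isomorphism in part (i) can alternatively be obtained from Lemma~\ref{lm:A-B-ideals}, since the homotopy between $\iota\circ\kappa$ and $\Id_A$ and the asymptotic homotopy between $\kappa\circ\iota$ and $\Id_B$ force $\kappa(p)\sim_A p$ for projections $p\in A$ and $\kappa(q)\sim_B q$ for projections $q\in B$. So there is nothing in the present paper to compare your reconstruction against.

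That said, two points in your sketch deserve correction if you intend it as an outline of Dadarlat's argument. First, in part (i) you have the inclusion backwards: the theorem asserts $B\subseteq A$ with $\iota\colon B\to A$, not that $B$ contains $A$. The interleaving must therefore be set up so that the non-nuclear pieces $D_n$ embed into the finite-dimensional stages of $A$ (after telescoping to create enough multiplicity), and $B$ is realised as the limit of the $D_n$ sitting inside $A=\varinjlim A_n$. Your line ``Its limit $B=\varinjlim A_n=\varinjlim D_n$ contains $A$ unitally via $\iota$'' conflates the two limits and reverses the containment. Second, the building blocks Dadarlat actually uses are not semiprojective in the usual sense; the passage from finite-stage homotopies to the global (asymptotic) homotopy is handled by more hands-on arguments specific to those blocks rather than by an appeal to semiprojectivity.
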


\noindent The statements (i) and (ii) that $\Phi$ is an isomorphism between the ideal lattices of $A$ and $B$, respectively, of $C$ and $A$, are included in the quoted results of Dadarlat, and it also follows from Lemma~\ref{lm:A-B-ideals}  in the situation considered in (i).

To apply Theorem~\ref{thm:Dad}, we need the following:

\begin{lemma} \label{lm:antiliminal} A separable just-infinite \Cs{} is of type I if and only if \emph{either} it is isomorphic to $\cK$, the compact operators on a separable infinite dimensional Hilbert space, \emph{or} it is an essential extension of $\cK$ by a finite dimensional \Cs. In the former case, $A$ is just-infinite of type ($\alpha$), and in the latter case $A$ is just-infinite of type ($\beta$); cf.\ Theorem~\ref{thm:types}.

In particular, no just-infinite \Cs{} of type ($\gamma$), i.e., RFD, is of type I.
\end{lemma}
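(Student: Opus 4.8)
The plan is to prove the biconditional in both directions, and then read off the ``type ($\alpha$)/($\beta$)'' dichotomy from the classification already established in Theorem~\ref{thm:types}. The ``if'' direction is immediate: if $A \cong \cK$ then $A$ is type I (all irreducible representations of $\cK$ are the identity representation, whose image is $\cK$ itself), and it is infinite dimensional and simple, hence just-infinite of type ($\alpha$). If $A$ is an essential extension $0 \to \cK \to A \to F \to 0$ with $F$ finite dimensional, then $A$ is an extension of two type I algebras, hence type I, and it is just-infinite of type ($\beta$) by Theorem~\ref{thm:types}~($\beta$) (taking $I_0 = \cK$, which is simple, essential, infinite dimensional, with finite dimensional quotient).

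For the ``only if'' direction, suppose $A$ is separable, just-infinite, and of type I. First I would invoke Lemma~\ref{lm:prime}: $A$ is prime, hence primitive (separability), so $A$ admits a faithful irreducible representation $\rho \colon A \to B(H)$ on a separable Hilbert space $H$. Since $A$ is infinite dimensional, $H$ is infinite dimensional. Because $A$ is of type I, the image $\rho(A)$ contains the ideal $\sK(H)$ of compact operators on $H$; pulling back, $A$ contains $\rho^{-1}(\sK(H))$ as a closed two-sided ideal, and since $\rho$ is faithful we may identify $A$ with $\rho(A) \subseteq B(H)$ and view $\sK(H)$ as a (nonzero, since $H$ is infinite dimensional) closed two-sided ideal of $A$. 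Now split into two cases according to whether $A = \sK(H)$ or $A \supsetneq \sK(H)$. In the first case $A \cong \sK$, done. In the second case, $A/\sK(H)$ is a nonzero quotient of the just-infinite algebra $A$, hence finite dimensional; so $A$ is an extension $0 \to \sK(H) \to A \to F \to 0$ with $F := A/\sK(H)$ finite dimensional and nonzero. The ideal $\sK(H)$ is simple and, being nonzero in the prime (indeed just-infinite) algebra $A$, essential by Lemma~\ref{lm:essential}. This is exactly the asserted form, and it is just-infinite of type ($\beta$) by Theorem~\ref{thm:types}~($\beta$).

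The final clause is then a one-line consequence: a type ($\gamma$) just-infinite \Cs{} is RFD (Theorem~\ref{thm:types}~($\gamma$)), and being RFD it has a separating family of finite dimensional irreducible representations; but a type I just-infinite \Cs{} is either $\sK$ (whose only irreducible representation is infinite dimensional) or an essential extension of $\sK$ by a finite dimensional algebra (whose faithful irreducible representation, acting on the infinite dimensional $H$ above, is infinite dimensional) — in neither case is there a \emph{separating} family of finite dimensional irreducible representations, since any finite dimensional representation kills the essential ideal $\sK(H)$. Hence no type ($\gamma$) algebra is of type I.

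The main obstacle is the single structural input that for a type I \Cs{} the image of every irreducible representation contains the compacts on the representation Hilbert space; but this is precisely the standard characterization of type I (liminal-in-each-irreducible-representation) \Cs s, and the paper already uses exactly this fact in Example~\ref{ex:exab} when discussing just-infinite \Cs s of type I, so I would simply cite that discussion. Everything else — primeness $\Rightarrow$ primitivity via separability, the extension being an extension of type I algebras hence type I, essentiality of a nonzero ideal in a prime algebra, and the reading-off of types ($\alpha$) and ($\beta$) — has been established verbatim earlier in the excerpt (Lemmas~\ref{lm:prime}, \ref{lm:essential}, Proposition~\ref{permanence}, Theorem~\ref{thm:types}), so the proof is essentially a matter of assembling these pieces in the right order.
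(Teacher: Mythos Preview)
Your proof is correct and follows essentially the same approach as the paper: both invoke Lemma~\ref{lm:prime} to get a faithful irreducible representation, use the type~I hypothesis to find $\cK$ inside the image, and then apply just-infiniteness to the quotient $A/\cK$. You are more thorough than the paper (which only writes out the ``only if'' direction and leaves the ``if'' direction and final clause implicit), but the core argument is identical.
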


\begin{proof} Let $A$ be a separable just-infinite \Cs{} of type I. By Lemma~\ref{lm:prime}, $A$ is prime, and hence primitive, so we can find a faithful irreducible representation $\pi$ of $A$ on a separable, necessarily infinite dimensional, Hilbert space $H$. Since $A$ is a \Cs{} of type I, the algebra $\cK$ of compact operators  on $H$ is contained in the image of $\pi$. Hence  $I = \pi^{-1}(\cK)$ is a non-zero closed two-sided ideal in $A$,  which is isomorphic to $\cK$. As $A$ is just-infinite, either $I=A$, or $A/I$ is finite dimensional. 
\end{proof}

\begin{corollary} \label{cor:Dad-subalgebra}  Let $A$ be a unital separable RFD just-infinite AF-algebra, and let $\{\pi_n\}_{n=1}^\infty$ be an exhausting sequence of pairwise  inequivalent non-faithful irreducible representations of $A$.
It follows that $A$ contains a unital non-nuclear RFD just-infinite sub-\Cs{} $B$ of real rank zero such that $\{\pi_n|_B\}_{n=1}^\infty$ is an exhausting sequence of pairwise  inequivalent non-faithful irreducible representations of $B$, and $\pi_n(B) = \pi_n(A)$, for all $n$. 
\end{corollary}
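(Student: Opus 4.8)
The plan is to combine Theorem~\ref{thm:Dad}~(i) with Lemma~\ref{lm:ji-subalgebra}~(i), using Lemma~\ref{lm:antiliminal} to check the hypothesis on $A$. Since $A$ is assumed to be a RFD just-infinite \Cs, it is of type ($\gamma$), so by Lemma~\ref{lm:antiliminal} it is \emph{not} of type I. Being a unital AF-algebra not of type I, Theorem~\ref{thm:Dad}~(i) applies and produces a unital non-nuclear sub-\Cs{} $B \subseteq A$ of real rank zero (and stable rank one) together with a unital $^*$-monomorphism $\kappa \colon A \to B$ such that $\iota \circ \kappa \sim_h \Id_A$ and $\kappa \circ \iota$ is asymptotically homotopic to $\Id_B$, and moreover $\Phi \colon \Ideal(A) \to \Ideal(B)$ is a lattice isomorphism.

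Next I would verify that the pair $(A,B)$ satisfies the hypotheses of Lemma~\ref{lm:ji-subalgebra}~(i): namely that $\Phi$ is an isomorphism (given directly by Theorem~\ref{thm:Dad}~(i)), and that every projection in $A$ is equivalent \emph{in $A$} to a projection in $B$. For the latter: given a projection $p \in A$, set $q = \kappa(p) \in B$. Then $q$ is a projection in $B \subseteq A$, and since $\iota \circ \kappa$ is homotopic to $\Id_A$, the projection $\iota(\kappa(p)) = q$, viewed inside $A$, is homotopic to $p$ in $A$; homotopic projections in a \Cs{} are Murray--von Neumann equivalent, so $q \sim_A p$. (Alternatively one can invoke Lemma~\ref{lm:A-B-ideals} with this $\kappa$, but here we only need the projection-lifting statement.) Thus the hypotheses of Lemma~\ref{lm:ji-subalgebra}~(i) hold.

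Applying Lemma~\ref{lm:ji-subalgebra}~(i) then immediately gives the conclusion: $B$ is just-infinite and RFD, $\{\pi_n|_B\}_{n=1}^\infty$ is an exhausting sequence of pairwise inequivalent non-faithful irreducible representations of $B$, and $\pi_n(B) = \pi_n(A)$ for all $n$. Since $B$ is non-nuclear by Theorem~\ref{thm:Dad}~(i), all the asserted properties are in place, and $B$ has real rank zero as required.

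I do not expect any serious obstacle here; the corollary is essentially a bookkeeping composition of the two cited results. The only points requiring a moment's care are (a) confirming that "RFD" forces "not type I" so that Theorem~\ref{thm:Dad} is applicable --- this is exactly Lemma~\ref{lm:antiliminal} --- and (b) extracting the Murray--von Neumann equivalence $p \sim_A \kappa(p)$ from the homotopy $\iota\circ\kappa \sim_h \Id_A$, which is the standard fact that homotopic projections are equivalent. Everything else is a direct invocation of Lemma~\ref{lm:ji-subalgebra}~(i).
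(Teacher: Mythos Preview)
Your proposal is correct and follows essentially the same route as the paper: invoke Lemma~\ref{lm:antiliminal} to see $A$ is not of type~I, apply Theorem~\ref{thm:Dad}~(i) to obtain $B$ and $\kappa$, use the homotopy $\iota\circ\kappa \sim_h \Id_A$ to get $\kappa(p)\sim_A p$ for every projection $p\in A$, and then feed everything into Lemma~\ref{lm:ji-subalgebra}~(i).
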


\begin{proof} By Lemma \ref{lm:antiliminal}, we can now apply Theorem~\ref{thm:Dad} (i) to find a sub-\Cs{} $B$ of $A$ with the properties listed therein. Each projection $p \in A$ is equivalent to a projection in $B$. Indeed, set $q = \kappa(p) \in B$. Then $q = (\iota \circ \kappa)(p)$ is homotopic (and hence equivalent) to $p$. The desired conclusion now follows from Lemma~\ref{lm:ji-subalgebra}~(i).
\end{proof}

\begin{corollary} \label{cor:Dad-subalgebra-2}  Let $A$ be a unital separable RFD just-infinite AF-algebra. Then $A$ is contained in a separable non-exact unital  RFD just-infinite \Cs{} $C$ of real rank zero, equipped with an exhausting sequence of pairwise  inequivalent non-faithful irreducible representations $\{\nu_n\}_{n=1}^\infty$, such that their restrictions to $A$  form an 
exhausting sequence of pairwise inequivalent non-faithful irreducible representations for $A$, and  $\nu_n(A) = \nu_n(C)$, for all $n$. 
\end{corollary}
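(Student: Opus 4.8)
The plan is to follow the same strategy used to prove Corollary~\ref{cor:Dad-subalgebra}, but now invoking part (ii) of Dadarlat's Theorem~\ref{thm:Dad} in place of part (i), and Lemma~\ref{lm:ji-subalgebra}~(ii) in place of Lemma~\ref{lm:ji-subalgebra}~(i). First I would observe that $A$, being a unital separable RFD just-infinite AF-algebra, is not of type I: this is precisely the content of Lemma~\ref{lm:antiliminal} (a type ($\gamma$) just-infinite \Cs{} is never of type I). Hence Theorem~\ref{thm:Dad}~(ii) applies to $A$, producing a unital separable non-exact \Cs{} $C$ of real rank zero (and stable rank one) which contains $A$, is asymptotically homotopy equivalent to $A$, and for which $\Phi \colon \Ideal(C) \to \Ideal(A)$ is a lattice isomorphism.

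Next I would simply feed these data into Lemma~\ref{lm:ji-subalgebra}~(ii). That lemma takes as hypotheses exactly what Theorem~\ref{thm:Dad}~(ii) delivers: $C$ is a unital \Cs{} of real rank zero containing the unital separable RFD just-infinite \Cs{} $A$ of real rank zero, $C$ is asymptotically homotopy equivalent to $A$, and $\Phi \colon \Ideal(C) \to \Ideal(A)$ is an isomorphism. Applying Lemma~\ref{lm:ji-subalgebra}~(ii) to the given exhausting sequence $\{\pi_n\}_{n=1}^\infty$ of pairwise inequivalent non-faithful irreducible representations of $A$ (which exists since $\Prim(A) \cong Y_\infty$ by Theorem~\ref{thm:types}, cf.\ Remark~\ref{rem:add-structure}) yields a just-infinite, RFD \Cs{} $C$ equipped with an exhausting sequence $\{\nu_n\}_{n=1}^\infty$ of pairwise inequivalent non-faithful irreducible representations such that $\Ker(\nu_n|_A) = \Ker(\pi_n)$ and $\nu_n(C) \cong \pi_n(A)$ for all $n$. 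In particular $\{\nu_n|_A\}_{n=1}^\infty$ is an exhausting sequence for $A$ and $\nu_n(A) = \nu_n(C)$, and $A$ and $C$ have the same characteristic sequence. Combining this with the non-exactness of $C$ from Theorem~\ref{thm:Dad}~(ii) gives all the asserted properties.

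There is essentially no obstacle here beyond bookkeeping, since the real work was already done in Lemma~\ref{lm:ji-subalgebra}~(ii) and in Dadarlat's theorem. The only point that requires a moment's care is to confirm that the hypothesis ``$\Phi \colon \Ideal(C) \to \Ideal(A)$ is an isomorphism'' in Lemma~\ref{lm:ji-subalgebra}~(ii) is met; this is part of the statement of Theorem~\ref{thm:Dad}~(ii), and (as noted in the footnote to Lemma~\ref{lm:ji-subalgebra}) also follows automatically from asymptotic homotopy equivalence by an argument along the lines of Lemma~\ref{lm:A-B-ideals}. Thus the proof is a three-line citation chain: non-type-I (Lemma~\ref{lm:antiliminal}) $\Rightarrow$ Theorem~\ref{thm:Dad}~(ii) $\Rightarrow$ Lemma~\ref{lm:ji-subalgebra}~(ii), and I would write it out in exactly that form.
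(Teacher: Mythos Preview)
Your proposal is correct and matches the paper's own proof essentially verbatim: the paper simply writes that the corollary ``follows immediately from Lemma~\ref{lm:antiliminal}, Theorem~\ref{thm:Dad}~(ii) and Lemma~\ref{lm:ji-subalgebra}~(ii),'' which is precisely the three-line citation chain you describe. Your added remarks about the exhausting sequence and the equality $\nu_n(A)=\nu_n(C)$ are also consistent with (and justified by) the proof of Lemma~\ref{lm:ji-subalgebra}~(ii).
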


\begin{proof} This follows immediately from Lemma \ref{lm:antiliminal}, Theorem~\ref{thm:Dad}~(ii) and Lemma~\ref{lm:ji-subalgebra}~(ii).
\end{proof}

\noindent The two above corollaries, in combination with the existence of a RFD just-infinite AF-algebra (see Section~\ref{sec:just-inf-AF}), now yield the following:

\begin{corollary} \label{cor:non-nuclear} There exist non-nuclear exact RFD just-infinite \Cs s, and there also exist non-exact RFD just-infinite \Cs s. 
\end{corollary}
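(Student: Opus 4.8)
The plan is to assemble the conclusion from the three ingredients already in place: the explicit RFD just-infinite AF-algebra constructed in Section~\ref{sec:just-inf-AF}, and Corollaries~\ref{cor:Dad-subalgebra} and \ref{cor:Dad-subalgebra-2}. First I would recall that Section~\ref{sec:just-inf-AF} produces a concrete unital separable RFD just-infinite AF-algebra $A$ (the one associated with the Bratteli diagram \eqref{eq:Bratteli}); in particular such an algebra exists, and it comes equipped with an exhausting sequence $\{\pi_n\}_{n=1}^\infty$ of pairwise inequivalent non-faithful irreducible representations (take $\pi_n$ to have kernel $I_n$, in the notation of Section~\ref{subsec:A}).

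Next I would apply Corollary~\ref{cor:Dad-subalgebra} to this $A$: it yields a unital non-nuclear RFD just-infinite sub-\Cs{} $B \subseteq A$ of real rank zero. Since $B$ is a sub-\Cs{} of the AF-algebra $A$, and AF-algebras are nuclear, hence exact, and exactness passes to sub-\Cs s (see \cite{BO-approx}), the algebra $B$ is exact. Thus $B$ is a non-nuclear, exact, RFD just-infinite \Cs, which establishes the first assertion of the corollary.

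For the second assertion I would apply Corollary~\ref{cor:Dad-subalgebra-2} to the same $A$: it produces a separable non-exact unital RFD just-infinite \Cs{} $C$ (of real rank zero) containing $A$. This is exactly a non-exact RFD just-infinite \Cs, which gives the second assertion. Both statements are therefore immediate once one has a single RFD just-infinite AF-algebra to feed into the two corollaries.

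There is essentially no obstacle here: the corollary is a bookkeeping consequence of work already done, and the only small point worth noting is the exactness of $B$, which follows because $B$ sits inside a nuclear (hence exact) \Cs{} and exactness is inherited by sub-\Cs s. The genuine content — producing the RFD just-infinite AF-algebra, and running Dadarlat's construction through Lemma~\ref{lm:ji-subalgebra} to preserve just-infiniteness and the RFD property — has all been carried out in Sections~\ref{sec:just-inf-AF} and \ref{sec:subalgebras}.
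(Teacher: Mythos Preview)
Your proposal is correct and matches the paper's approach exactly: the paper simply states that the corollary follows from Corollaries~\ref{cor:Dad-subalgebra} and \ref{cor:Dad-subalgebra-2} together with the existence of the RFD just-infinite AF-algebra from Section~\ref{sec:just-inf-AF}. Your added remark that $B$ is exact because it sits inside a nuclear (hence exact) AF-algebra and exactness passes to sub-\Cs s is precisely the small point the paper leaves implicit in order to justify the word ``exact'' in the statement.
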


\noindent It is shown in \cite[Theorem 4.3]{PerRor:AF} that each unital \Cs{} $A$ of real rank zero contains a unital AF-algebra $B$ such that each projection in $A$ is equivalent to a projection in $B$, and such that $\Phi \colon \Ideal(A) \to \Ideal(B)$ is an isomorphism. Together with Lemma~\ref{lm:ji-subalgebra} (i), this proves the following:

\begin{proposition} \label{prop:AF-subalg}  Let $A$ be a unital separable RFD just-infinite \Cs{} of real rank zero, and let $\{\pi_n\}_{n=1}^\infty$ be an exhausting sequence of pairwise inequivalent non-faithful irreducible representations of $A$.
It follows that $A$ contains a unital RFD just-infinite AF-sub-\Cs{} $B$
such that $\{\pi_n|_B\}_{n=1}^\infty$ is an exhausting sequence of pairwise  inequivalent non-faithful irreducible representations of $B$, and $\pi_n(B) = \pi_n(A)$, for all $n$. 
\end{proposition}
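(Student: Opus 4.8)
The plan is to combine the cited structure theorem for real-rank-zero \Cs s with Lemma~\ref{lm:ji-subalgebra}~(i), essentially verbatim. First I would invoke \cite[Theorem 4.3]{PerRor:AF}: since $A$ is a unital separable \Cs{} of real rank zero, there is a unital AF-sub-\Cs{} $B \subseteq A$ such that every projection in $A$ is Murray--von Neumann equivalent (within $A$) to a projection in $B$, and such that the map $\Phi \colon \Ideal(A) \to \Ideal(B)$, $\Phi(I) = I \cap B$, is a lattice isomorphism. These are exactly the two hypotheses needed to feed into Lemma~\ref{lm:ji-subalgebra}~(i).

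Next I would apply Lemma~\ref{lm:ji-subalgebra}~(i) with this $B$. Since $A$ is unital, separable, RFD just-infinite, and of real rank zero, and $\{\pi_n\}_{n=1}^\infty$ is an exhausting sequence of pairwise inequivalent non-faithful irreducible representations of $A$, the lemma directly yields that $B$ is just-infinite and RFD, that $\{\pi_n|_B\}_{n=1}^\infty$ is again an exhausting sequence of pairwise inequivalent non-faithful irreducible representations of $B$, that $\pi_n(B) = \pi_n(A)$ for all $n$, and that $A$ and $B$ have the same characteristic sequence. That is precisely the content of the proposition.

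The only point meriting a word of care is the existence of an exhausting sequence for $A$ in the first place: since $A$ is RFD just-infinite, Theorem~\ref{thm:types}~($\gamma$) gives $\Prim(A) \cong Y_\infty$, so there are countably infinitely many non-zero primitive ideals $I_1, I_2, \dots$, and Proposition~\ref{prop:singleton} (or Proposition~\ref{prop:Prim1}) ensures each quotient $A/I_n$ is a full matrix algebra; picking an irreducible representation $\pi_n$ with $\Ker(\pi_n) = I_n$ gives the exhausting sequence as in Remark~\ref{rem:add-structure}. But in the statement this sequence is already given, so no work is needed there. I do not foresee a genuine obstacle: the proposition is a clean corollary, and the entire proof is two sentences — cite \cite[Theorem 4.3]{PerRor:AF} to produce $B$, then cite Lemma~\ref{lm:ji-subalgebra}~(i) to transfer all the structure. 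If anything requires attention, it is merely checking that the hypotheses of Lemma~\ref{lm:ji-subalgebra}~(i) match the output of \cite[Theorem 4.3]{PerRor:AF} on the nose, which they do.

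\begin{proof}
By \cite[Theorem 4.3]{PerRor:AF}, the unital separable real-rank-zero \Cs{} $A$ contains a unital AF-sub-\Cs{} $B$ such that each projection in $A$ is equivalent (in $A$) to a projection in $B$, and such that $\Phi \colon \Ideal(A) \to \Ideal(B)$ is a lattice isomorphism. These are exactly the hypotheses of Lemma~\ref{lm:ji-subalgebra}~(i). Applying that lemma, we conclude that $B$ is just-infinite and RFD, that $\{\pi_n|_B\}_{n=1}^\infty$ is an exhausting sequence of pairwise inequivalent non-faithful irreducible representations of $B$, and that $\pi_n(B) = \pi_n(A)$ for all $n$. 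In particular, $A$ and $B$ have the same characteristic sequence.
\end{proof}
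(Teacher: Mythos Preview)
Your proof is correct and matches the paper's own argument essentially verbatim: the paper also cites \cite[Theorem 4.3]{PerRor:AF} to produce the unital AF-sub-\Cs{} $B$ with the projection-equivalence and ideal-lattice properties, and then invokes Lemma~\ref{lm:ji-subalgebra}~(i) to conclude.
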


\noindent By combining Corollary~\ref{cor:Dad-subalgebra} with Proposition~\ref{prop:AF-subalg}, one obtains the following fact: Suppose that $A$ is a unital separable RFD just-infinite \Cs{} of real rank zero, and  $\{\pi_n\}_{n=1}^\infty$ is an exhausting sequence of pairwise inequivalent non-faithful irreducible representations. Then there is a strictly decreasing sequence $A \supset A_1 \supset A_2 \supset A_3 \supset \cdots$ of unital sub-\Cs s $A_k$ of $A$ such that each $A_k$ is a RFD just-infinite \Cs, and $\pi_n(A_k) = \pi_n(A)$, for all $k$ and $n$. (In fact, every other \Cs{} in the sequence $\{A_k\}$ can be taken to be an AF-algebra and the remaining ones to be non-nuclear.)

In particular, a unital separable RFD just-infinite \Cs{} of real rank zero can never be \emph{minimal} in the sense that it contains no proper RFD just-infinite sub-\Cs s.

\section{Just-infiniteness of group \Cs s} \label{sec:ji-groups}

\noindent We discuss in this section when \Cs s associated with groups are just-infinite. 

The group algebra $\C[G]$ of a group $G$ is in a natural way a $^*$-algebra in such a way that each group element $g \in G$ becomes a unitary in $\C[G]$, and it can be completed to become a \Cs, usually in many ways. The \emph{universal \Cs{}} of $G$, denoted by $C^*(G)$, is the completion of $\C[G]$ with respect to the maximal $C^*$-norm on $\C[G]$. Each unitary representation $\pi$ of the group $G$ on a Hilbert space gives rise to unital $^*$-representations (again denoted by $\pi$) of the $^*$-algebras $\C[G]$ and $C^*(G)$ on the same Hilbert space. Respectively, each unital $^*$-representation $\pi$ of $C^*(G)$ restricts to a $^*$-representation of $\C[G]$, and if this restriction is faithful, then it creates a $C^*$-norm  $\| \, \cdot \, \|_\pi$ on this algebra. Each $C^*$-norm on $\C[G]$ arises in this way, where by a $C^*$-norm on $\C[G]$ we mean a (faithful) norm such that the completion of $\C[G]$ with respect to this norm is a \Cs. 

Given a  unitary representation $\pi$ of $G$, we let $C^*_\pi(G)$ denote the completion of $\pi(\C[G])$. This is equal to the completion of $\C[G]$ with respect to the norm $\| \, \cdot \, \|_\pi$, if $\pi$ is faithful on $\C[G]$. The \emph{reduced group \Cs{}} of $G$, denoted by $C^*_\lambda(G)$, arises in this way from the left-regular representation $\lambda$ of $G$ on $\ell^2(G)$. It is well-known that the maximal and the reduced $C^*$-norms on $\C[G]$ are equal, i.e., $C^*(G) = C^*_\lambda(G)$, if and only if $G$ is amenable (see \cite[Theorem 2.6.8]{BO-approx}). It is also well-known (see, e.g., \cite[Exercise 6.3.3]{BO-approx}) that if the reduced group \Cs{} $C^*_\lambda(G)$ has a finite-dimensional representation, then $G$ must be amenable. Hence the following holds:

\begin{proposition} Let $G$ be a group and suppose that $C^*_\lambda(G)$ is just-infinite. Then either $C^*_\lambda(G)$ is simple, or $G$ is amenable.
\end{proposition}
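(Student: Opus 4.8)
The plan is to observe that the proposition is essentially a formal consequence of the definition of just-infiniteness together with the two standard facts about reduced group \Cs s recalled just above the statement, so the argument is short and, in particular, does not require separability of $C^*_\lambda(G)$ (hence does not invoke Theorem~\ref{thm:types}).

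First I would recall that $C^*_\lambda(G)$ is always unital, with unit $\lambda(e)$, and then split into two cases according to whether or not it is simple. If $C^*_\lambda(G)$ is simple there is nothing to prove, so assume it is not simple; then it contains a non-zero closed two-sided ideal $I$ with $I \ne C^*_\lambda(G)$. By the definition of just-infiniteness, the quotient $C^*_\lambda(G)/I$ is finite dimensional, and since $I$ is proper this quotient is non-zero; hence by Remark~\ref{ex:finite-dim} it is a non-trivial direct sum of full matrix algebras, and in particular it admits a (non-zero) finite dimensional irreducible representation $\sigma$. Composing $\sigma$ with the quotient map $C^*_\lambda(G) \to C^*_\lambda(G)/I$ produces a finite dimensional representation of $C^*_\lambda(G)$. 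At this point I would invoke the fact recalled above (see \cite[Exercise 6.3.3]{BO-approx}): if $C^*_\lambda(G)$ has a finite dimensional representation, then $G$ is amenable. This gives the desired dichotomy. (As noted earlier, in the amenable case one moreover has $C^*_\lambda(G) = C^*(G)$, but this is not needed here.)

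I do not expect any genuine obstacle: the only point of substance is that a non-simple just-infinite \Cs{} automatically has a non-zero finite dimensional quotient, hence a finite dimensional representation, and everything else is either the definition of just-infiniteness or a citation. One could alternatively route the argument through Theorem~\ref{thm:types}, noting that a non-simple just-infinite separable \Cs{} is of type ($\beta$) or ($\gamma$) and in either case has finite dimensional non-faithful irreducible representations; but the direct argument above is cleaner and avoids assuming that $G$ is countable.
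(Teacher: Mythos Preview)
Your argument is correct and is essentially the same as the paper's: the paper does not even spell out a proof, merely stating the proposition as an immediate consequence (``Hence the following holds'') of the cited fact that a finite dimensional representation of $C^*_\lambda(G)$ forces amenability. Your write-up makes explicit exactly the short chain of implications the paper leaves implicit.
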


\noindent
Whereas $\C[G]$ always has one maximal $C^*$-norm, there may or may not be a minimal $C^*$-norm on $\C[G]$, depending on the group $G$. If $G$ is $C^*$-simple, i.e., if  $C^*_\lambda(G)$ is a simple \Cs, then the norm $\| \, \cdot \, \|_\lambda$ on $\C[G]$  is minimal.

\begin{proposition} \label{prop:min-norm}
Let $G$ be a group, and let $\pi$ be a representation of $G$ which gives a faithful representation of $\C[G]$. If $C_\pi^*(G)$ is just-infinite, then $\| \, \cdot \, \|_\pi$ is a minimal $C^*$-norm on $\C[G]$.
\end{proposition}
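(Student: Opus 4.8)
The plan is to argue by contradiction: suppose $\|\,\cdot\,\|_\pi$ is \emph{not} minimal on $\C[G]$, so there is a $C^*$-norm $\|\,\cdot\,\|_\sigma$ on $\C[G]$ with $\|\,\cdot\,\|_\sigma \le \|\,\cdot\,\|_\pi$ and the two norms not equal. The inequality of norms yields a surjective $^*$-homomorphism $\varphi \colon C_\pi^*(G) \to C_\sigma^*(G)$ which is the identity on the dense subalgebra $\C[G]$ (since $\sigma$ and $\pi$ agree as $^*$-representations of $\C[G]$, both being faithful on it). Because the two norms are not equal, $\varphi$ is not injective, so $J := \Ker(\varphi)$ is a non-zero closed two-sided ideal in $C_\pi^*(G)$. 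Since $C_\pi^*(G)$ is just-infinite, $C_\pi^*(G)/J \cong C_\sigma^*(G)$ must be finite dimensional.

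The next step is to derive a contradiction from the finite dimensionality of $C_\sigma^*(G)$. A finite dimensional \Cs{} contains no infinite family of pairwise orthogonal non-zero projections, and in particular $\C[G] \subseteq C_\sigma^*(G)$ would be a finite dimensional (hence finitely generated as a vector space) algebra; this forces $G$ to be finite. Concretely, the images of the group elements $g \in G$ in $C_\sigma^*(G)$ are distinct unitaries (distinctness because $\|\,\cdot\,\|_\sigma$ is faithful on $\C[G]$), and they span the finite dimensional space $C_\sigma^*(G)$, so $G$ is finite, whence $\C[G] = C^*(G) = C_\pi^*(G)$ is finite dimensional. But a just-infinite \Cs{} is by definition infinite dimensional --- a contradiction. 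Therefore no such $\sigma$ exists, i.e., $\|\,\cdot\,\|_\pi$ is minimal.

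I do not expect a serious obstacle here; the only point needing a little care is the passage from "$C_\sigma^*(G)$ finite dimensional" to "$G$ finite", and in fact one could shortcut this by simply noting that $C_\pi^*(G)$ being just-infinite is infinite dimensional, while $\C[G]$ is dense in it, so $\C[G]$ is infinite dimensional as a vector space; since the canonical map $\C[G] \to C_\sigma^*(G)$ is injective (faithfulness of $\|\,\cdot\,\|_\sigma$), $C_\sigma^*(G)$ is infinite dimensional too, directly contradicting finite dimensionality. This makes the argument entirely formal. One should also record explicitly why $\varphi$ is well-defined and surjective: the hypothesis $\|\,\cdot\,\|_\sigma \le \|\,\cdot\,\|_\pi$ means the identity map on $\C[G]$ extends continuously to the completions, and its image is a complete (hence closed) dense $^*$-subalgebra of $C_\sigma^*(G)$, so all of it.
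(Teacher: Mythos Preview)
Your proof is correct and follows essentially the same approach as the paper: a smaller $C^*$-norm gives a $^*$-homomorphism out of $C_\pi^*(G)$ whose image contains a faithful copy of $\C[G]$ and is therefore infinite dimensional, so just-infiniteness forces the kernel to be zero and hence the norms to coincide. The only cosmetic difference is that the paper argues directly (showing any smaller norm must equal $\|\cdot\|_\pi$) rather than by contradiction, and it invokes explicitly the fact that injective $^*$-homomorphisms between \Cs s are isometric, which is what underlies your step ``the two norms are not equal, so $\varphi$ is not injective''.
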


\begin{proof} Any $C^*$-norm on $\C[G]$ which is smaller than $\| \, \cdot \, \|_\pi$ arises from a unitary representation $\nu$ of $G$ on a Hilbert space, which factors through $C^*_\pi(G)$. Since $\nu$ is injective on $\C[G]$, the image $\nu(C^*_\pi(G))$ cannot be finite dimensional, so $\nu$ is injective, and hence isometric, on $C^*_\pi(G)$. (Recall that each injective \sh{} between \Cs s automatically is isometric.) The norm arising from $\nu$ is therefore equal to the norm arising from $\pi$. 
\end{proof}

\noindent If $G$ is infinite and if $C^*_\pi(G)$ is simple, for some unitary representation $\pi$ of the group $G$, then $C^*_\pi(G)$ is just-infinite and $\| \, \cdot \, \|_\pi$ is a minimal norm on $\C[G]$; cf.\ Proposition~\ref{prop:min-norm}.

The group algebra $\C[G]$ is said to be \emph{$^*$-just-infinite}  if each $^*$-representation of $\C[G]$ either is injective, or has finite dimensional image. Note that $^*$-just-infinite is a formally weaker condition than ``just-infinite'', as $\C[G]$ can have non-self-adjoint two-sided ideals.

\begin{proposition} \label{prop:C*Gji}
Let $G$ be an infinite group. Then $C^*(G)$ is just-infinite if and only if $\C[G]$ is $^*$-just-infinite and $\C[G]$ has a unique $C^*$-norm.
\end{proposition}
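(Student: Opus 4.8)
The plan is to exploit three standard facts: $\C[G]$ is a dense $^*$-subalgebra of $C^*(G)$ (the left-regular representation being faithful on $\C[G]$); every $^*$-representation of $\C[G]$ extends to a $^*$-representation of $C^*(G)$ by its universal property; and a finite-dimensional subspace of a Banach space is closed, so a dense finite-dimensional subalgebra is the whole algebra. I will use the last of these repeatedly. I would also record at the outset that $C^*(G)$ is infinite-dimensional whenever $G$ is infinite, since otherwise $\C[G]$ would be a dense, hence full, finite-dimensional subalgebra.

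For the ``only if'' direction, I would assume $C^*(G)$ is just-infinite. To see that $\C[G]$ is $^*$-just-infinite, take an arbitrary $^*$-representation $\varphi$ of $\C[G]$ on a Hilbert space $H$; since each $g \in G$ is a unitary in $\C[G]$, the operator $\varphi(g)$ has norm at most one, so $g \mapsto \varphi(g)$ is a unitary representation of $G$ on $\varphi(1)H$ and induces $\bar\varphi \colon C^*(G) \to B(\varphi(1)H)$ extending $\varphi$. If $\Ker(\bar\varphi) = 0$, then $\varphi$ is injective; otherwise $C^*(G)/\Ker(\bar\varphi)$ is finite-dimensional and contains $\varphi(\C[G]) = \bar\varphi(\C[G])$, so the image is finite-dimensional. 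For uniqueness of the $C^*$-norm, I would observe that any $C^*$-norm on $\C[G]$ comes from a faithful $^*$-representation of $\C[G]$, which extends to a surjection $\pi \colon C^*(G) \to C^*_\pi(G)$; if this had non-trivial kernel, then $C^*_\pi(G)$ would be finite-dimensional yet contain a dense copy of $\C[G]$, forcing $\C[G]$, hence $G$, to be finite, contrary to hypothesis. So $\pi$ is an isomorphism and the given norm is the maximal one.

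For the ``if'' direction, assume $\C[G]$ is $^*$-just-infinite with a unique $C^*$-norm, and let $I$ be a non-zero closed two-sided ideal of $C^*(G)$ with quotient map $q \colon C^*(G) \to C^*(G)/I$. Viewing $q|_{\C[G]}$ as a $^*$-representation of $\C[G]$, the hypothesis of $^*$-just-infiniteness yields a dichotomy. If $q(\C[G])$ is finite-dimensional, then by density $C^*(G)/I = q(\C[G])$ is finite-dimensional, as wanted. If instead $q|_{\C[G]}$ is injective, then $a \mapsto \|q(a)\|$ is a $C^*$-norm on $\C[G]$, so by uniqueness it coincides with the maximal norm; then $q$ is isometric on the dense subalgebra $\C[G]$, hence on all of $C^*(G)$, contradicting $I \ne 0$. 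Thus every proper quotient of $C^*(G)$ is finite-dimensional, i.e.\ $C^*(G)$ is just-infinite.

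The genuinely substantive step is the last one: recognizing that the \emph{injective} alternative in the definition of $^*$-just-infiniteness manufactures a $C^*$-norm on $\C[G]$, which the uniqueness hypothesis then pins down as the maximal norm, forcing the quotient map to be isometric and the ideal to vanish. Everything else --- extending abstract $^*$-representations of $\C[G]$ to $C^*(G)$, and the closedness and density arguments --- is routine, so I anticipate no serious obstacle beyond keeping this bookkeeping straight.
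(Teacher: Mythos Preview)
Your proof is correct and follows essentially the same route as the paper's: in both directions you extend $^*$-representations of $\C[G]$ to $C^*(G)$ via universality, use just-infiniteness to force the finite-dimensional/injective dichotomy, and in the ``if'' direction recognize that injectivity of the quotient map on $\C[G]$ manufactures a $C^*$-norm which uniqueness pins down as maximal. The only cosmetic difference is that the paper phrases the ``if'' direction in terms of non-faithful representations rather than quotients by ideals, but these are interchangeable viewpoints.
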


\begin{proof} Suppose first that $C^*(G)$ is just-infinite. Let $\pi$ be a unital $^*$-representation of $\C[G]$, and extend it to a $^*$-representation of $C^*(G)$. Then $\pi$ is either injective on $C^*(G)$, or $\pi(C^*(G))$ is finite dimensional. If $\pi$ is  injective on $C^*(G)$, then it is also injective on $\C[G]$, while if $\pi(C^*(G))$ is finite dimensional, then so is $\pi(\C[G])$. Hence $\C[G]$ is $^*$-just-infinite.
Each $C^*$-norm on $\C[G]$ arises as $\| \, \cdot \, \|_\pi$, for some $^*$-representation $\pi$ of $C^*(G)$ which is faithful on $\C[G]$. Thus $\pi(C^*(G))$ is infinite dimensional, so $\pi$ must be injective on $C^*(G)$. This entails that $\| \, \cdot \, \|_\pi$ is the  maximal norm on $\C[G]$, and thus the only $C^*$-norm on $\C[G]$. 

Suppose now that $\C[G]$ has a unique $C^*$-norm, and that $\C[G]$ is $^*$-just-infinite. Let $\pi$ be a non-faithful unital $^*$-representation of $C^*(G)$. If the restriction of $\pi$ to $\C[G]$ were faithful, then it would induce a $C^*$-norm on $\C[G]$, which by uniqueness would be equal to the maximal $C^*$-norm on $\C[G]$. This contradicts that $\pi$ is non-faithful on $C^*(G)$. Hence $\pi$ is not faithful on $\C[G]$, whence $\pi(\C[G])$ is finite dimensional. In this case, $\pi(C^*(G))$ is equal to $\pi(\C[G])$. This  proves that $C^*(G)$ is just-infinite.
\end{proof}

\begin{corollary} Let $G$ be a group for which $C^*(G)$ is just-infinite. Then $G$ is amenable, and hence $C^*(G) = C^*_\lambda(G)$ is nuclear.
\end{corollary}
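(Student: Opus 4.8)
The plan is to feed the just-infiniteness of $C^*(G)$ into Proposition~\ref{prop:C*Gji} and then invoke the standard characterisation of amenability via coincidence of the maximal and reduced $C^*$-norms. First I would check that $G$ is necessarily infinite: a just-infinite \Cs{} is by definition infinite dimensional, and since $\C[G]$ sits inside $C^*(G)$ as a dense $^*$-subalgebra, $\C[G]$ must be infinite dimensional, which forces $G$ to be infinite. Thus we are in the setting of Proposition~\ref{prop:C*Gji}, which tells us that $\C[G]$ carries a unique $C^*$-norm (and is $^*$-just-infinite, though only the norm-uniqueness is needed here).

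Next I would observe that the left-regular representation $\lambda$ of $G$ on $\ell^2(G)$ is faithful on $\C[G]$, so $\| \, \cdot \, \|_\lambda$ is a genuine $C^*$-norm on $\C[G]$. By the uniqueness just obtained, $\| \, \cdot \, \|_\lambda$ coincides with the maximal $C^*$-norm on $\C[G]$, and therefore $C^*(G) = C^*_\lambda(G)$. By the well-known result \cite[Theorem 2.6.8]{BO-approx}, recalled in the text preceding Proposition~\ref{prop:min-norm}, the equality $C^*(G) = C^*_\lambda(G)$ holds if and only if $G$ is amenable; hence $G$ is amenable. Finally, the full group \Cs{} of an amenable group is nuclear, so $C^*(G) = C^*_\lambda(G)$ is nuclear, which completes the argument.

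There is essentially no serious obstacle here: each step is immediate from results already assembled in the excerpt. The only point requiring a small verification is that the hypothesis ``$G$ infinite'' of Proposition~\ref{prop:C*Gji} is automatic, which, as noted above, is forced by the definition of just-infiniteness together with the density of $\C[G]$ in $C^*(G)$.
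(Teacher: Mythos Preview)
Your proof is correct and follows essentially the same route as the paper's own argument: apply Proposition~\ref{prop:C*Gji} to obtain uniqueness of the $C^*$-norm on $\C[G]$, conclude that the maximal and reduced norms coincide, and invoke the standard amenability criterion. Your additional verification that $G$ is infinite (needed to invoke Proposition~\ref{prop:C*Gji}) is a welcome point of care that the paper's proof leaves implicit.
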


\begin{proof} It follows from Proposition~\ref{prop:C*Gji} that the reduced and the maximal norm on $\C[G]$ coincide, so $G$ is amenable. 
\end{proof}

\begin{corollary} \label{cor:3conditions} For each group $G$, if $C^*(G)$ is just-infinite, then  $\C[G]$ is $^*$-just-infinite, which in turn implies that $G$ is just-infinite.
\end{corollary}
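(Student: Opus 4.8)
The plan is to prove the two implications separately.

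\emph{First implication.} Assume $C^*(G)$ is just-infinite, so in particular $C^*(G)$ is infinite dimensional; since $C^*(H)$ is finite dimensional for every finite group $H$, this forces $G$ to be infinite, and Proposition~\ref{prop:C*Gji} applies. It yields directly that $\C[G]$ is $^*$-just-infinite (it also gives uniqueness of the $C^*$-norm, which is not needed here). Alternatively, one can repeat the short argument from the first paragraph of the proof of Proposition~\ref{prop:C*Gji}: a unital $^*$-representation $\pi$ of $\C[G]$ sends each $g \in G$ to a unitary, so it is bounded for the $\ell^1$-norm and extends to a $^*$-representation $\tilde{\pi}$ of $C^*(G)$; since $C^*(G)$ is just-infinite, $\tilde{\pi}$ is either injective, in which case $\pi$ is injective, or has finite dimensional image, in which case $\pi(\C[G]) \subseteq \tilde{\pi}(C^*(G))$ is finite dimensional. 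Hence $\C[G]$ is $^*$-just-infinite.

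\emph{Second implication.} Assume $\C[G]$ is $^*$-just-infinite; under the standing hypothesis of the corollary $G$ is infinite (by the observation just made), so that asking whether $G$ is just-infinite is meaningful, and I would argue by contraposition. Suppose $G$ is not just-infinite, i.e., there is a normal subgroup $N \trianglelefteq G$ with $N \neq \{e\}$ and $G/N$ infinite. The quotient homomorphism $G \to G/N$ induces a surjective unital $^*$-homomorphism $q \colon \C[G] \to \C[G/N]$, and $\Ker(q) \neq 0$, since it contains the nonzero element $n - e$ for any $n \in N \setminus \{e\}$. Now compose $q$ with the left-regular representation $\lambda$ of $G/N$ on $\ell^2(G/N)$, which is faithful on $\C[G/N]$ (the standard fact that for any discrete group one recovers an element of the group algebra by applying its image under the left-regular representation to the point mass at the identity). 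Then $\pi := \lambda \circ q$ is a $^*$-representation of $\C[G]$ with $\Ker(\pi) = \Ker(q) \neq 0$ and with image $\pi(\C[G]) \cong \C[G/N]$, which is infinite dimensional because $G/N$ is infinite. Thus $\pi$ is neither injective nor of finite dimensional image, contradicting that $\C[G]$ is $^*$-just-infinite. Therefore $G$ is just-infinite.

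The argument is elementary and I do not expect a genuine obstacle. The two points that call for a little care are: (a) ensuring that the proper quotient $\C[G/N]$ of $\C[G]$ still admits a faithful $^*$-representation on a Hilbert space --- which is precisely the standard fact that the left-regular representation of a discrete group is faithful on its complex group algebra; and (b) keeping track of which hypothesis supplies the infiniteness of $G$, so that the conclusion ``$G$ is just-infinite'' is correctly framed --- this is guaranteed by $C^*(G)$, and hence $\C[G]$, being infinite dimensional.
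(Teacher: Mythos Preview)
Your proof is correct and follows essentially the same approach as the paper: the first implication is deduced from Proposition~\ref{prop:C*Gji}, and the second uses the quotient map $\C[G]\to\C[G/N]$ for a non-trivial normal subgroup $N$. The paper argues the second implication directly rather than by contraposition, and simply applies $^*$-just-infiniteness to the $^*$-homomorphism $\C[G]\to\C[G/N]$ without explicitly composing with the left-regular representation; your version, which does make this composition explicit and also tracks the infiniteness of $G$, is slightly more careful but not genuinely different.
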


\begin{proof}
The first implication follows from Proposition~\ref{prop:C*Gji}. To see that the second implication holds, suppose that $\C[G]$ is $^*$-just-infinite, and let $N$ be a non-trivial normal subgroup of $G$. The quotient map $G \to G/N$ lifts to a necessarily non-injective $^*$-homomorphism $\C[G] \to \C[G/N]$. Hence $\C[G/N]$ must be finite dimensional, whence $G/N$ is finite. 
\end{proof}

\noindent None of the reverse implications above hold; cf.\ Examples~\ref{ex:Z}  and \ref{ex:G}.

\begin{example} \label{ex:Z} The group algebra $\C[\Z]$ is $^*$-just-infinite, and the group  $\Z$ is just-infinite; but  $C^*(\Z)$ is not just-infinite, and $\C[G]$ has no minimal $C^*$-norm.
\end{example}

\begin{proof} Each unital  $^*$-representation $\pi$ of $\C[\Z]$ on a Hilbert space $H$ admits a natural factorization $\C[\Z] \to C(K) \to B(H)$, where $K  \subseteq \T$ is the spectrum of the unitary operator $u = \pi(1)$, and where $C(K) \to B(H)$ is injective.  It is easy to see that $\pi$ is faithful on $\C[\Z]$ if and only if $K$ is an infinite set. If $\pi$ is not faithful, then $K$ is finite, which entails that $\pi(\C[\Z])$ is finite dimensional. This shows that $\C[\Z]$ is $^*$-just-infinite.

As there is no minimal closed infinite subset of $\T$, there is no minimal $C^*$-norm on $\C[\Z]$, and we conclude from Proposition~\ref{prop:C*Gji}  that $C^*(\Z)$ is not just-infinite. This conclusion also follows from Example~\ref{ex:ji-abelian}.
\end{proof}

\begin{proposition} \label{prop:locfinite}
If $G$ is a locally finite group, then $\C[G]$ has a unique $C^*$-norm. 
\end{proposition}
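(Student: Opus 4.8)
The plan is to reduce the statement to the elementary fact that a finite group $F$ has a unique $C^*$-norm on $\C[F]$ — indeed $\C[F]$ is already finite dimensional, hence carries a unique norm making it a \Cs, and this norm agrees with the restriction of the maximal norm on $C^*(F)$. From there the task is to push this uniqueness through the directed union $G = \bigcup_{i} F_i$, where $\{F_i\}$ is the net of finite subgroups of $G$ directed by inclusion, using the corresponding description $\C[G] = \bigcup_i \C[F_i]$.

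First I would fix two $C^*$-norms $\|\cdot\|_1$ and $\|\cdot\|_2$ on $\C[G]$ and an element $x \in \C[G]$. Since $x$ has finite support, it lies in $\C[F_i]$ for some finite subgroup $F_i \le G$. The key observation is that if $\|\cdot\|$ is any $C^*$-norm on $\C[G]$, then its restriction to the sub-$*$-algebra $\C[F_i]$ is again a $C^*$-norm on $\C[F_i]$: the completion of $\C[F_i]$ inside the completion of $\C[G]$ is a \Cs{} containing $\C[F_i]$ as a dense $*$-subalgebra, and since $\C[F_i]$ is already finite dimensional (hence complete in any norm) this completion equals $\C[F_i]$ with the restricted norm, which is therefore a $C^*$-norm. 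By uniqueness of the $C^*$-norm on the finite-dimensional \Cs{} $\C[F_i]$, we get $\|x\|_1 = \|x\|_2$. As $x \in \C[G]$ was arbitrary, the two norms coincide, proving uniqueness.

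I expect the only genuine point requiring care — though it is standard — is the claim that the restriction of a $C^*$-norm on a $*$-algebra to a finite-dimensional $*$-subalgebra is again a $C^*$-norm, i.e.\ that the closure of $\C[F_i]$ in $C^*_{\|\cdot\|}(G)$ is all of $\C[F_i]$. This follows because finite-dimensional normed spaces are complete, so $\C[F_i]$ is already closed in the completion; and a norm-closed $*$-subalgebra of a \Cs{} is a \Cs. One should also note that every $C^*$-norm on a finite-dimensional $*$-algebra that admits one is unique, because such an algebra is $*$-isomorphic to a finite direct sum of matrix algebras, each of which has a unique \Cs{} norm. No separability or countability hypothesis on $G$ is needed. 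Nothing here interacts with the notion of just-infiniteness; the proposition is a self-contained lemma that will later be combined with Proposition~\ref{prop:C*Gji} to analyze when $C^*(G)$ is just-infinite for locally finite $G$.
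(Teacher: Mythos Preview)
Your proposal is correct and follows essentially the same argument as the paper: pick $x\in\C[G]$, use local finiteness to place it in $\C[H]$ for a finite subgroup $H$, and invoke uniqueness of the $C^*$-norm on the finite-dimensional $^*$-algebra $\C[H]$ to conclude that any two $C^*$-norms on $\C[G]$ agree on $x$. Your write-up is in fact slightly more careful than the paper's in spelling out why the restriction of a $C^*$-norm on $\C[G]$ to $\C[H]$ is again a $C^*$-norm (via completeness of finite-dimensional subspaces), but the underlying idea is identical.
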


\begin{proof} Each element $x \in \C[G]$ is a linear combination of finitely many elements from $G$, and each finitely generated subgroup of $G$ is finite, by assumption. Hence there is a finite subgroup $H$ of $G$ such that $x \in \C[H] \subseteq \C[G]$. Now, $\C[H]$ is a (finite dimensional) \Cs, so it has a unique $C^*$-norm. Thus any two $C^*$-norms on $\C[G]$ must agree on $x$. As $x$ was arbitrarily chosen, we conclude that $\C[G]$ has a unique $C^*$-norm.
\end{proof}

\begin{question} \label{q:locfin}
Let $G$ be a group and suppose that $\C[G]$ has a unique $C^*$-norm. Does it follow that $G$ is locally finite?
\end{question}

\noindent The \emph{augmentation ideal} of the full group \Cs{} $C^*(G)$ of a group $G$ is the kernel of the trivial representation $C^*(G) \to \C$. If $G$ is infinite  and if the augmentation ideal is simple, or, more generally, just-infinite, then $C^*(G)$ is just-infinite by Proposition~\ref{permanence} (ii), since the augmentation ideal always is essential when $G$ is infinite. 

There are locally finite groups whose augmentation ideal is simple, such as Hall's universal groups, see \cite{BHPS-1976} and \cite{LeiPug-IJM2002}. It follows from Theorem~\ref{thm:types}  that $C^*(G)$ is just-infinite of type ($\beta$), for any such group $G$. It is easy to see that if an amenable group $G$ has simple augmentation ideal, then it must be simple; however, simple groups (even locally finite ones) need not have simple augmentation ideal: the infinite alternating group $A_\infty$ is a counterexample.

\begin{lemma} \label{RF->RFD} Let $G$ be a residually finite group for which $C^*(G)$ is just-infinite. Then $C^*(G)$ is RFD (and hence of type ($\gamma$); cf.\ Theorem~\ref{thm:types}).
\end{lemma}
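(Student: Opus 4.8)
The plan is to exploit the residual finiteness of $G$ to produce a separating family of finite-dimensional representations of $C^*(G)$, which is exactly what RFD means, and then invoke Theorem~\ref{thm:types} to upgrade this to type ($\gamma$). First I would recall that $G$ residually finite means that for every non-trivial $g \in G$ there is a finite quotient $G \to G/N_g$ in which the image of $g$ is non-trivial; each such finite quotient gives a finite-dimensional unitary representation of $G$, hence a finite-dimensional $^*$-representation of $C^*(G)$ that factors through $C^*(G/N_g)$. Taking the direct sum over a countable cofinal family of finite-index normal subgroups, we obtain a $^*$-representation $\sigma$ of $C^*(G)$ whose restriction to $\C[G]$ separates the group elements, and hence (since $\C[G]$ is spanned by the group elements and the representation is $^*$-preserving) separates a dense subspace in a way that, combined with the positivity of the GNS-type estimates, would need to be promoted to injectivity on $C^*(G)$.

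The cleanest route to injectivity is to observe that $C^*(G)$ is just-infinite, so $\Ker(\sigma)$ is either zero or of finite codimension; if $\Ker(\sigma) \ne 0$ then $C^*(G)/\Ker(\sigma)$ is finite dimensional, hence so is its image $\sigma(C^*(G))$, and in particular the restriction $\sigma|_{\C[G]}$ has finite-dimensional image. But $\sigma|_{\C[G]}$ is injective (it separates group elements, and a $^*$-representation of $\C[G]$ injective on the group basis need not be injective in general — here is the subtle point). To get genuine injectivity on $\C[G]$, I would instead argue at the level of $C^*$-norms: by Proposition~\ref{prop:locfinite}'s proof technique, or more directly, the family of finite-dimensional representations coming from residual finiteness is faithful on $C^*_\lambda(G)$ when $G$ is residually finite and amenable (a standard fact; residually finite amenable groups have faithful families of finite-dimensional representations on the reduced $C^*$-algebra). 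Since $C^*(G)$ is just-infinite, it is amenable by the corollary following Proposition~\ref{prop:C*Gji}, so $C^*(G) = C^*_\lambda(G)$, and the above family is therefore faithful on $C^*(G)$ itself.

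Concretely, the key steps in order are: (1) note $G$ infinite and $C^*(G)$ just-infinite force $G$ amenable, so $C^*(G) = C^*_\lambda(G)$; (2) enumerate finite-index normal subgroups $\{N_k\}$ of $G$ with $\bigcap_k N_k = \{e\}$, obtaining finite-dimensional representations $\pi_k$ of $G$ factoring through $G/N_k$; (3) form $\sigma = \bigoplus_k \pi_k$ and observe that, as a representation of the amenable group $G$, its weak closure contains the left-regular representation (because $\bigcap_k N_k = \{e\}$ forces $\sigma$ to be weakly equivalent to $\lambda$ via Fell's absorption/amenability, or directly because a suitable infinite tensor-type construction of the $\pi_k$ weakly contains $\lambda$); hence $\|x\|_\sigma \ge \|x\|_\lambda = \|x\|_{C^*(G)}$ for $x \in \C[G]$, so $\sigma$ is isometric, hence faithful on $C^*(G)$; (4) conclude $C^*(G)$ is RFD, and then Theorem~\ref{thm:types}~($\gamma$)(i)$\Leftrightarrow$(ii) gives that $C^*(G)$ is just-infinite of type ($\gamma$).

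The main obstacle is step (3): making precise why the finite-dimensional representations $\pi_k$ arising from the finite quotients $G/N_k$ collectively weakly contain the regular representation. The soft argument is that $G$ is amenable and residually finite, and a residually finite amenable group admits an approximating sequence of finite quotients through which $\lambda$ can be weakly recovered — one way to see this is that the trivial representation of each $N_k$ induces (via $\mathrm{Ind}_{N_k}^G$) a finite-dimensional representation weakly containing $\lambda_{G/N_k}$, and amenability lets one stitch these together; alternatively one invokes the standard fact (see, e.g., \cite{BekLou-2000} or \cite{BO-approx}) that for residually finite amenable $G$ the family of finite quotient representations is faithful on $C^*_\lambda(G)$. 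Once that citation or argument is in place, the rest is bookkeeping. I would be careful to phrase step (3) so as to avoid the trap of concluding injectivity on $\C[G]$ from mere injectivity on the group basis — routing everything through the $C^*$-norm on $C^*_\lambda(G)$ sidesteps that entirely.
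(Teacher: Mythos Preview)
Your proposal is correct but takes an unnecessarily circuitous route. In fact, you identified the paper's argument in your second paragraph and then abandoned it for a harder one.

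The paper's proof is exactly your ``cleanest route'': form $\Phi \colon C^*(G) \to \prod_i C^*(G/N_i)$ from a separating family of finite-index normal subgroups, and use just-infiniteness to reduce to showing that the image of $\Phi$ is infinite dimensional. You then try to obtain this by proving $\Phi$ is injective on $\C[G]$, worry (unnecessarily) about whether separating group elements suffices, and switch to the amenability/weak-containment argument. But the paper sidesteps all of this: the image of $\Phi$ surjects onto each $C^*(G/N_i)$, and since $G$ is infinite with $\bigcap_i N_i = \{e\}$, we have $\sup_i |G:N_i| = \infty$, so $\dim C^*(G/N_i)$ is unbounded and the image is infinite dimensional. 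No injectivity on $\C[G]$, no amenability, no Fell topology needed.

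Two further remarks. First, your worry about injectivity on $\C[G]$ is in fact unfounded: given a nonzero $x = \sum_{g \in S} c_g g$ with finite support $S$, choose (using that the $N_i$ are decreasing with trivial intersection) an index $i$ for which the elements of $S$ map to distinct cosets in $G/N_i$; then the image of $x$ in $\C[G/N_i]$ is nonzero. So that route also closes, just less efficiently than the paper's. Second, your detour through amenability and weak containment of $\lambda$ is valid but imports machinery (the corollary that just-infinite $C^*(G)$ forces amenability, plus the standard fact about residually finite amenable groups) that the lemma does not require.
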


\begin{proof} Let $\{N_i\}_{i \in \I}$ be a decreasing net of finite index normal subgroups of $G$ with $\bigcap_{i \in \I} N_i = \{e\}$, and consider the \sh{} 
$$\Phi \colon C^*(G) \to \prod_{i \in \I} C^*(G/N_i).$$ 
It suffices to show that $\Phi$ is injective; and by the assumption that $C^*(G)$ is just-infinite, it further suffices to show that the image of $\Phi$ is infinite dimensional. The latter follows from the fact that $G$ is infinite (as $C^*(G)$ is just-infinite) and (hence) that  $\sup_{i \in \I} |G:N_i| = \infty$.
\end{proof}

\begin{question} \label{q:ji}
Does there exist an infinite, residually finite group $G$ such that $C^*(G)$ is just-infinite?
\end{question}

\noindent If such a group $G$ exists, then $C^*(G)$ will be a RFD just-infinite  \Cs{} by Lemma~\ref{RF->RFD}. If the answer to Question~\ref{q:locfin} is affirmative, then $G$ must be locally finite. This leads to the following:

\begin{question} \label{q:jilf}
Does there exist an infinite, residually finite, locally finite (necessarily just-infinite) group $G$ such that $\C[G]$ is $^*$-just-infinite?
\end{question}

\noindent If such a group $G$ exists, then $C^*(G)$ will be a RFD just-infinite  \Cs{} by Lemma~\ref{RF->RFD}, Proposition~\ref{prop:locfinite} and Proposition~\ref{prop:C*Gji}. 
\emph{After the first version of this paper was made public, Question~\ref{q:jilf} has been answered in the affirmative in \cite{BGS-2016}.}

Just-infinite groups are divided into three disjoint subclasses (the trichotomy for just-infinite groups), see \cite[Section 6]{Gr00}: The \emph{non-residually finite} ones (which contain a finite index normal subgroup $N$ which is the product of finitely many copies of a simple group), \emph{branch groups} (see more about those in Theorem~\ref{thm:no-branch} below), and the \emph{hereditarily just-infinite groups}, i.e., the residually finite groups for which all finite index normal subgroups are just-infinite. It is shown in Theorem~\ref{thm:no-branch} below that if $G$ is a just-infinite branch group, then $\C[G]$ is not $^*$-just-infinite, whence $C^*(G)$ is not just-infinite. Hence, if there exists a residually finite group $G$ for which $C^*(G)$ is just-infinite (and hence also RFD), then $G$ must be hereditarily just-infinite. 

Consider the following three (classes of) examples of hereditarily just-infinite groups: the integers $\Z$, the infinite dihedral group $D_\infty$, and $\mathrm{PSL}_n(\Z)$, for $n \ge 3$. As shown below, if $G$ is any of these groups, then $C^*(G)$ is not just-infinite. Moreover,  there is no unitary representation $\pi$ of $G$ such that $C^*_\pi(G)$ is RFD and just-infinite. If $G = \Z$, then this claim follows immediately from Example~\ref{ex:ji-abelian}.  In the two examples below we discuss the situation for the two other (classes of) groups.

\begin{example}[$\mathrm{PSL}_n(\Z)$, $n \ge 3$] \label{ex:PSLnZ}
The groups $\mathrm{PSL}_n(\Z)$, $n \ge 3$,  are renowned for being the first examples of  infinite groups with Kazdan's property (T), as first shown by Kazdan. For a different and nice proof by Shalom, see \cite{Shalom-1999}. They are residually finite, as witnessed by the finite quotient groups $\mathrm{PSL}_n(\Z/N\Z)$, $N \in \N$; and they are   hereditarily just-infinite by Margulis' normal subgroup theorem. Bekka--Cowling--de la Harpe proved in \cite{BekCowHar:PSLnZ} that $\mathrm{PSL}_n(\Z)$ is $C^*$-simple, for all $n \ge 2$. In particular, $\mathrm{PSL}_n(\Z)$  is an ICC group (all its non-trivial conjugacy classes are infinite). 

We conclude from these facts that the \Cs{} $C^*_\lambda(\mathrm{PSL}_n(\Z))$ is just-infinite (being simple) for all $n \ge 2$, while the full group \Cs{} $C^*(\mathrm{PSL}_n(\Z))$ is not just-infinite, because $\mathrm{PSL}_n(\Z)$ is non-amenable, for $n \ge 2$. 

Bekka proved in \cite{Bekka-Invent-07} that the set of extremal characters on $\mathrm{PSL}_n(\Z)$, for $n \ge 3$, is a countably infinite set consisting of the trivial character $\delta_e$ and a sequence $\{\delta_k\}_{k=1}^\infty$ of characters, each of which factors through a finite quotient,  $\mathrm{PSL}_n(\Z/N\Z)$, of $\mathrm{PSL}_n(\Z)$ for a suitable integer $N$ (depending on $k$). Recall that each (extremal) character on a group corresponds to an extremal trace on its full group \Cs. The trivial character $\delta_e$ on $\mathrm{PSL}_n(\Z)$ corresponds to the canonical trace $\tau_0$ on $C^*(\mathrm{PSL}_n(\Z))$; while for $k \ge 1$, the character $\delta_k$ corresponds to a trace, denoted by $\tau_k$, whose GNS-representation $\pi_{\tau_k}$ is finite dimensional. Bekka also shows that  $\tau_k \to \tau_0$ in the weak$^*$ topology.

Furthermore, observe that $C^*(\mathrm{PSL}_n(\Z))$ has a just-infinite quotient, namely the simple \Cs{} $C^*_\lambda(\mathrm{PSL}_n(\Z))$. However, as shown below, there is no unitary representation $\pi$ of $\mathrm{PSL}_n(\Z)$ such that $C^*_\pi(\mathrm{PSL}_n(\Z))$ is RFD and just-infinite. 

Indeed, assume that  $\pi$ is such a unitary representation of $\mathrm{PSL}_n(\Z)$.   As in Remark~\ref{rem:add-structure}, let $\{\pi_j\}_{j=1}^\infty$ be an exhausting sequence of pairwise  inequivalent non-faithful irreducible representations of $C^*_\pi(\mathrm{PSL}_n(\Z))$. Then $\rho_j = \pi_j \circ \pi$, $ j \ge 1$, is a sequence of pairwise  inequivalent non-faithful irreducible representations of $C^*(\mathrm{PSL}_n(\Z))$. Hence $\rho_j$ is equivalent to $\pi_{\tau_{k(j)}}$, for some $k(j) \ge 1$, by the above mentioned result of Bekka.  Suppose now that $x \in C^*(\mathrm{PSL}_n(\Z))$ belongs to the kernel of $\pi$. Then, for all $j \ge 1$, $\pi_{\tau_{k(j)}}(x) = 0$, so $\tau_{k(j)}(x^*x) = 0$. It follows that $\tau_0(x^*x) = \lim_{j\to\infty} \tau_{k(j)}(x^*x) = 0$, so $\lambda(x) = 0$. This shows that $\lambda$ is weakly contained in $\pi$. We conclude that the left-regular representation $\lambda$ factors through $\pi$, so the simple \Cs{} $C^*_\lambda(\mathrm{PSL}_n(\Z))$ is a quotient of $C^*_\pi(\mathrm{PSL}_n(\Z))$. Each simple quotient of a RFD just-infinite  \Cs{} is finite dimensional, so $C^*_\pi(\mathrm{PSL}_n(\Z))$ cannot both be RFD and just-infinite. 
\end{example}

\begin{example}[The infinite dihedral group $D_\infty$] \label{ex:Dinfty}
The infinite dihedral group $D_\infty$ is an example of a hereditarily residually finite just-infinite group,  see \cite{Gr00}, and it is isomorphic to the free product $\Z_2 * \Z_2$, which is an amenable group (of linear growth). The group \Cs{} $C^*(\Z_2 * \Z_2)$ is known to be a sub-\Cs{} of $M_2(C([0,1]))$ (being the universal unital \Cs{} generated by two projections), and is hence subhomogeneous (cf.\ Proposition~\ref{prop:subhom}). Clearly, any quotient of a subhomogeneous \Cs{} is again subhomogeneous, so we conclude from Lemma~\ref{lm:subhom-justinf} that $C^*(\Z_2 * \Z_2)$ is not just-infinite, and neither is any of its quotients. 
\end{example}

\section{Algebras associated with  groups of intermediate growth} \label{sec:G}

\noindent In this section we present some results concerning algebras associated with the $3$-generated infinite torsion group constructed in \cite{Gr80}, which we here will denote by $\cG$. This group is a simple example of a group of Burnside-type, and it is investigated more deeply in \cite{Gr84} and many other papers (see the surveys \cite{Gr05}, \cite{Gr11}, and the references therein). Among its unusual properties, most notably $\cG$ is of intermediate growth (between polynomial and exponential), and, as a consequence, it is amenable, but not elementary amenable, thus answering questions by Milnor and Day, respectively;  cf.\ \cite{Gr84}. Furthermore, $\cG$ is a just-infinite group of branch type (and hence residually finite), and moreover, it is a self-similar group (i.e., a group generated by states of a Mealy-type automaton).

There are indications that various \Cs s associated with $\cG$ (including $C^*(\cG)$ and some of its quotients, discussed below) may be new types of \Cs s with properties unseen yet in the theory of operator algebras. Our main conjecture in this direction is the following:

\begin{conjecture} \label{conjecture}
The (self-similar RFD) \Cs{} $C^*_\pi(\cG)$ generated by the Koopman representation $\pi$ of $\cG$ is just-infinite. 
\end{conjecture}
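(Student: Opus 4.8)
\noindent The plan is to apply the classification in Theorem~\ref{thm:types}. Write $A = C^*_\pi(\cG)$ and let $B = \pi(\C[\cG])$ be the dense $^*$-subalgebra of $A$. Then $A$ is infinite dimensional, since $B$ is (being, by the result of Section~\ref{sec:G} announced in the introduction, just-infinite as a $^*$-algebra, hence in particular infinite dimensional and $^*$-just-infinite), and $A$ is RFD, since the Koopman representation decomposes as an orthogonal direct sum of finite dimensional subrepresentations (on the successive level subspaces of $L^2$ of the boundary of the binary rooted tree), which therefore form a separating family of finite dimensional representations of $A$. By Theorem~\ref{thm:types} ($\gamma$), to prove that $A$ is just-infinite it then suffices to show that $A/J$ is finite dimensional for every non-zero closed two-sided ideal $J$ of $A$.

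So let $J$ be a non-zero closed two-sided ideal of $A$, and let $q \colon A \to A/J$ be the quotient map. Restricting $q$ to $B$ gives a $^*$-representation $q|_B \colon B \to A/J$ with dense range. Since $B$ is $^*$-just-infinite, either $q(B)$ is finite dimensional, or $q|_B$ is injective. In the first case $q(B)$, being finite dimensional, is closed in $A/J$, and being dense it therefore equals $A/J$; hence $A/J$ is finite dimensional, as required. In the second case $q$ realises $B$ as a dense $^*$-subalgebra of the $C^*$-algebra $A/J$, so $\|b\|' := \|q(b)\|_{A/J}$ defines a $C^*$-norm on $B$, with $\|\cdot\|' \le \|\cdot\|_\pi$ since $q$ is contractive. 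If one knew that $\|\cdot\|' = \|\cdot\|_\pi$ on $B$ — e.g.\ if $\|\cdot\|_\pi$ were the only $C^*$-norm on $B$, or even merely the smallest one — then $q$ would be isometric on the dense subalgebra $B$, hence isometric on $A$, forcing $J = \Ker(q) = 0$, a contradiction.

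\noindent Thus the conjecture reduces to the assertion that $\|\cdot\|_\pi$ is the unique $C^*$-norm on $\pi(\C[\cG])$; equivalently, that $C^*_\pi(\cG)$ coincides with the maximal $C^*$-completion of $\pi(\C[\cG])$, i.e.\ with the quotient of $C^*(\cG)$ by the closed two-sided ideal generated by $\Ker(\pi|_{\C[\cG]})$. This is the exact analogue, with $C^*_\pi$ in place of $C^*$, of the characterisation in Proposition~\ref{prop:C*Gji}; the difference is that, unlike for the full group $C^*$-algebra, the norm $\|\cdot\|_\pi$ is not the maximal norm by construction, so here this is a genuine statement to be proved rather than a definition.

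Proving that assertion is, I expect, the main obstacle, and it is why the statement is posed as a conjecture rather than a theorem: it amounts to showing that every faithful $^*$-representation of $\pi(\C[\cG])$ is weakly contained in $\pi$. The tools I would try to exploit are the self-similar (recursive) structure of the pair $(\cG,\pi)$ — the fact that $\pi$ is unitarily equivalent to a representation built from two copies of itself via the embedding $\cG \hookrightarrow \cG \wr S_2$ — the amenability of $\cG$, which forces $C^*(\cG) = C^*_\lambda(\cG)$ and gives access to weak-containment and approximation techniques, and the explicit description of the ideal structure of $B$ underlying the proof in Section~\ref{sec:G} that $B$ is just-infinite. If, on the contrary, $\pi(\C[\cG])$ were to carry more than one $C^*$-norm, this route would fail, and one would instead have to control the closed ideals of $A$ directly, for instance by computing $\Prim(A)$ and verifying the hypotheses of Theorem~\ref{thm:types} ($\gamma$) by other means.
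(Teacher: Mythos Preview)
This statement is a \emph{conjecture} in the paper, not a theorem; the paper explicitly leaves it open (see the comments after Conjecture~\ref{conjecture} and the last paragraph of Section~\ref{sec:G}). There is therefore no proof in the paper to compare against. What the paper offers as evidence is Theorem~\ref{thm:B}: that $B=\pi(\C[\cG])$ is just-infinite (as an abstract algebra), and that $C^*_\pi(\cG)$ is RFD and self-similar.

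Your proposal is not a proof either, and to your credit you do not pretend otherwise. What you give is a correct reduction: using that $B$ is just-infinite (Theorem~\ref{thm:B}(iv)), you show that $C^*_\pi(\cG)$ is just-infinite if and only if $\|\cdot\|_\pi$ is the minimal $C^*$-norm on $B$. This is precisely the adaptation of Propositions~\ref{prop:min-norm} and~\ref{prop:C*Gji} from $\C[G]$ to the quotient $B=\pi(\C[\cG])$, and your argument for it is sound. One cosmetic point: the appeal to Theorem~\ref{thm:types}($\gamma$) in your first paragraph is unnecessary---``$A/J$ is finite dimensional for every non-zero closed ideal $J$'' is simply the definition of just-infinite, not a consequence of that theorem.

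The substantive gap, which you identify correctly, is that no one has shown $\|\cdot\|_\pi$ to be minimal on $B$. The paper does not attempt this either; it records the conjecture, proves the algebraic analogue for $B$, and stops. Your suggested lines of attack (self-similarity, amenability, the ideal structure of $B$) are reasonable heuristics, but none of them is known to close the gap, which is exactly why the statement remains a conjecture.
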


\noindent
The  Koopman representation $\pi$ of $\cG$ will be described below, along with the notion of self-similarity. If the conjecture above is correct, then $C^*_\pi(\cG)$ is a just-infinite \Cs{} of type ($\gamma$) as described in Theorem~\ref{thm:types}; cf.\ Lemma~\ref{RF->RFD}, and it is the first example of such a \Cs{} associated with a group. 

Recall that the group $\cG$ is generated by four elements $a,b,c,d$ satisfying the relations 
\begin{equation} \label{eq:G-relators}
 1 = a^2=b^2=c^2=d^2 = bcd = \sigma^k\big((ad)^4\big) = \sigma^k\big((adacac)^4\big), 
\end{equation}
for $k=0,1,2, \dots$, where the permutation $\sigma$ is given by the substitutions:
$$a \to aca, \qquad b \to d, \qquad c \to b, \qquad d \to c.$$
This presentation was found by I.\ Lysenok in \cite{Ly85}, and it is a minimal presentation (in the sense that no relator in \eqref{eq:G-relators} can be deleted without changing the group, see \cite{Gr99}). In fact, $\cG$ is generated by $3$ elements, as $d = bc$. The set $\{1,b,c,d\}$ is the Klein group $\Z/2\Z \oplus \Z/2\Z$. 

For our purposes it will be most important to know that $\cG$ has a faithful self-similar action by automorphisms on an infinite binary rooted tree $T$, as shown, in part, here:

\begin{equation} \label{eq:T}
\begin{split}
\xymatrix@C.3pc{&&&&&&&\rbullet \ar@{-}[dllll] \ar@{-}[drrrr]&&&&&&&\\
&&v_0& \bullet \ar@{-}[dll] \ar@{-}[drr] &&&&&&&& \bullet \ar@{-}[dll] \ar@{-}[drr] &v_1&&\\
&\bullet \ar@{-}[dl] \ar@{-}[dr] &&&&\bullet \ar@{-}[dl] \ar@{-}[dr]  &&&& \bullet \ar@{-}[dl] \ar@{-}[dr]  &&&& \bullet \ar@{-}[dl] \ar@{-}[dr]  & \\ 
\bullet && \bullet && \bullet && \bullet && \bullet && \bullet && \bullet && \bullet} 
\end{split}
\end{equation}
The generators $a,b,c,d$ act on $T$ as follows: The root of the tree (marked in red) is a common fixed point. The generator $a$ just permutes the two vertices $v_0$ and $v_1$ at the first level and acts trivially inside the subtrees $T_0$ and $T_1$ with roots $v_0$ and $v_1$, respectively. The generators $b,c,d$ fix the vertices $v_0$ and $v_1$ (and hence leave the subtrees $T_0$ and $T_1$ invariant), and they are defined recursively by:
\begin{equation} \label{eq:2}
b|_{T_0} = a, \quad  b|_{T_1} = c, \qquad
c|_{T_0} = a, \quad  c|_{T_1} = d, \qquad
d|_{T_0} = 1, \quad  d|_{T_1} = b,
\end{equation}
when identifying the subtrees $T_0$ and $T_1$ with $T$ in the natural way, and where $1$ stands for the identity automorphism. For more details on this definition, and other definitions of $\cG$, we refer to \cite{Gr80, Gr84, Gr05, Gr11}. The relations \eqref{eq:2} imply that $\cG$ is a self-similar group in the sense that it has a natural embedding
\begin{equation} \label{eq:psi-G}
\psi\colon \cG \to \cG \wr (\Z/2\Z) \cong (\cG \times \cG) \rtimes (\Z/2\Z),
\end{equation}
where $\Z/2\Z = \{e,\varepsilon\}$ acts on $\cG \times \cG$ by permuting the two copies of $\cG$ ($e$ is the identity element and $\varepsilon$ is a transposition). The embedding $\psi$ is given as follows: 
$$\psi(a) = (1,1) \cdot \varepsilon = \varepsilon, \hspace{.2cm} \psi(b) = (a,c) \cdot e = (a,c), \hspace{.2cm} \psi(c) = (a,d) \cdot e = (a,d), \hspace{.2cm} \psi(d) = (1,b) \cdot e =(1,b).$$

To further illlustrate this action of $\cG$ on the tree $T$ it is convenient to label the vertices of the $n$th level, $V_n$, of $T$ by the set $\{0,1\}^n$ and equip each $V_n$ with the lexicographic ordering:
\begin{equation} \label{eq:T1}
\begin{split}
\xymatrix@C.25pc{&&&&&&&\emptyset \ar@{-}[dllll] \ar@{-}[drrrr]&&&&&&&\\
&&& 0 \ar@{-}[dll] \ar@{-}[drr] &&&&&&&& 1\ar@{-}[dll] \ar@{-}[drr] &&&\\
&00 \ar@{-}[dl] \ar@{-}[dr] &&&&01 \ar@{-}[dl] \ar@{-}[dr]  &&&& 10 \ar@{-}[dl] \ar@{-}[dr]  &&&& 11 \ar@{-}[dl] \ar@{-}[dr]  & \\
000 && 001 && 010 && 011 && 100 && 101 && 110 && 111} 
\end{split}
\end{equation}
The action of the group $\cG$ on $T$ yields an action of $\cG$ by homeomorphisms on the boundary $\partial T$ of $T$, which consists of geodesic rays joining the root $\emptyset$ with infinity. The boundary $\partial T$ can in a natural way be identified with the Cantor set $\{0,1\}^\N$ of infinite binary sequences equipped with the Tychonoff topology. 

Let $\mu = \bigtimes_{n=1}^\infty \, \{\frac12,\frac12\}$ be the uniform Bernoulli measure on $\partial T$. It is invariant with respect to the action of the entire group $\mathrm{Aut}(T)$ of automorphisms on $T$, and hence with respect to the action of $\cG$ on $T$. The topological dynamical system $(\cG, \partial T)$ can be converted into a metric dynamical system $(\cG,\partial T, \mu)$ which is ergodic (while $(G,\partial T)$ is minimal), because the action of $\cG$ on each level $V_n$ is transitive, see \cite[Proposition 6.5]{GNS00}. 

Let $\pi$ be the (unitary) Koopman representation of $\cG$ on the Hilbert space $L^2(\partial T,\mu)$ given by $\big(\pi(g)f\big)(x) = f(g^{-1}x)$, where $f \in L^2(\partial T,\mu)$, $g \in \cG$, and $x \in \partial T$.
We denote the image of the group algebra $\C[\cG]$ under the representation $\pi$ by $B$, and we let as usual $C^*_\pi(\cG)$ denote the completion of $\C[\cG]$ with respect to the norm induced by $\pi$.

The following theorem carries some evidence in support of Conjecture~\ref{conjecture}.

\begin{theorem} \label{thm:B} Let $\cG = \langle a,b,c,d \rangle$ be the infinite torsion group of intermediate growth from above, let $\pi$ be the Koopman representation of $\cG$,  and let $B=\pi(\C[\cG])$. Then:
\begin{enumerate}
\item $B$ is self-similar, infinite dimensional and RFD.
\item $C^*_\pi(\cG)$ is self-similar, infinite dimensional, RFD, and it posseses a faithful trace.
\item The natural surjection $\pi \colon \C[\cG] \to B$ is not injective, whence $\C[\cG]$ is not $^*$-just-infinite.
\item $B$ is just-infinite.
\end{enumerate}
\end{theorem}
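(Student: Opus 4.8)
The plan is to verify the two defining conditions: that $B$ is infinite dimensional, and that every nonzero two-sided ideal of $B$ has finite codimension. For the first, note that $\cG$ acts faithfully on $(\partial T,\mu)$, so the Koopman representation $\pi$ is faithful on the group $\cG$ and the operators $\pi(g)$, $g\in\cG$, are pairwise distinct unitaries in $B$. If $B$ were finite dimensional it would be a finite-dimensional $\Cs$, and so $\cG$ would embed as a finitely generated subgroup of some $\GL_N(\C)$; as $\cG$ is a torsion group, Schur's theorem on torsion linear groups would force $\cG$ to be finite, contradicting that $\cG$ is infinite.

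For the ideal condition I would bring in the self-similar structure. The partition $\partial T=\partial T_0\sqcup\partial T_1$ of the boundary into the two maximal rooted subtrees yields a unitary $L^2(\partial T,\mu)\cong L^2(\partial T,\mu)\oplus L^2(\partial T,\mu)$ under which $\pi$ assumes the recursive form $\pi(g)\mapsto\diag\big(\pi(g|_{T_0}),\pi(g|_{T_1})\big)$ for $g$ in the first-level stabiliser, and $\pi(a)\mapsto\left(\begin{smallmatrix}0&1\\1&0\end{smallmatrix}\right)$; this gives an injective $\sh$ $\Psi\colon B\to M_2(B)$. Two further ingredients feed in. (1) As $\cG$ is a branch group, there is a finite-index subgroup $N\le\cG$ with $N\times N$ inside the first-level stabiliser; and as the trivial representation of $\cG$ is a subrepresentation of $\pi$ (the constant functions), the augmentation character of $\C[\cG]$ descends to a character $\varepsilon\colon B\to\C$. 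Setting $L:=\pi\big(\{x\in\C[N]:\varepsilon(x)=0\}\big)$, a short computation gives $\Psi(B)\supseteq M_2(L)$; moreover the ideal of $B$ generated by $L$ has finite codimension, and --- $\cG$ being torsion, so that $N^{\mathrm{ab}}$ is torsion and the augmentation ideal of $\C[N]$ is idempotent --- one has $L=L^{2}$. (2) The congruence ideals $\mathfrak m_n\subseteq B$, generated by $\{\pi(g)-1:g\in\mathrm{St}(n)\}$ with $\mathrm{St}(n)$ the $n$-th level stabiliser, satisfy: $B/\mathfrak m_n$ is finite dimensional (being a quotient of the group algebra of the finite group $\cG/\mathrm{St}(n)$) and $\bigcap_n\mathfrak m_n=0$ (since $\bigcup_n\ell^2(V_n)$ is dense in $L^2(\partial T,\mu)$).

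Given these, let $I\ne0$ be a two-sided ideal of $B$. Then $\Psi(I)$ is a nonzero ideal of $\Psi(B)$; pinching it between copies of $M_2(L)\subseteq\Psi(B)$ --- which keeps one inside $\Psi(I)$ --- and using that $B$ is a semiprime algebra of Hilbert space operators and that $L=L^{2}$, I would produce from $I$ a nonzero two-sided ideal of $B$ one self-similar level down, and transport it back along $\Psi^{-1}$. Iterating this descent, and using $[\cG:N]<\infty$ together with $\bigcap_n\mathrm{St}(n)=\{1\}$, the process must terminate after finitely many steps inside some congruence ideal; that is, $I\supseteq\mathfrak m_n$ for some $n$. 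Since $B/\mathfrak m_n$ is finite dimensional, so is its quotient $B/I$, and hence $B$ is just-infinite.

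The crux --- and the main obstacle --- is precisely this descent-and-termination step: showing that a nonzero ideal of $B$, after finitely many passages through the first-level self-similar map, must swallow an entire congruence ideal $\mathfrak m_n$. This is exactly where the behaviour of $B$ departs from that of $\C[\cG]$, which is \emph{not} $^*$-just-infinite by Theorem~\ref{thm:no-branch}: although $\pi$ is faithful on $\cG$, it is not faithful on $\C[\cG]$ (part (iii)), and the relations recorded by $\ker\pi$ are what dissolves the branch obstruction to just-infiniteness present in $\C[\cG]$ itself. Equivalently, what has to be established is that every proper two-sided ideal of $\C[\cG]$ containing $\ker\pi$ contains a congruence ideal.
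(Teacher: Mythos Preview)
Your framework parallels the paper's setup: the self-similar embedding $\psi_\pi\colon B\to M_2(B)$ (your $\Psi$), a finite-index self-replicating normal subgroup (the paper uses $K=\langle(ab)^2\rangle^\cG$ of index $16$, with $K\times K\subseteq\psi(K)$), and the associated augmentation-type ideal $\Delta$ (your $L$) satisfying $M_{2^n}(\Delta)\subseteq\psi_\pi^n(\Delta)$ and $\dim(B/\Delta^2)<\infty$. Your observation that $L=L^2$ over $\C$ because $\cG$ is torsion is correct and is essentially the paper's Lemma~\ref{Lemma5} (which uses $[K,K]$ in place of idempotence). But you have correctly identified, and then left open, the decisive step.

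The gap is your ``descent-and-termination'' argument. You assert that iterating the passage through $\Psi$ ``must terminate after finitely many steps inside some congruence ideal,'' citing only $[\cG:N]<\infty$ and $\bigcap_n\mathrm{St}(n)=\{1\}$. These two facts hold for every branch group and cannot by themselves force termination --- the analogous statement in $\C[\cG]$ is false by Theorem~\ref{thm:no-branch}, as you yourself note. What is missing is a concrete mechanism that, starting from an arbitrary nonzero $x\in I$, produces after finitely many applications of $\psi_\pi$ a matrix with a \emph{nonzero scalar entry}. Once such an entry exists at some level $m$, sandwiching $\psi_\pi^m(x)$ between matrix units with coefficients in $\Delta\subseteq M_{2^m}(\Delta)\subseteq\psi_\pi^m(\Delta)$ yields $M_{2^m}(\Delta^2)\subseteq\psi_\pi^m(I)$, whence $\dim(B/I)\le 2^{2m}\dim(B/\Delta^2)<\infty$.

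The paper supplies this mechanism via the \emph{contracting property} of $\cG$: sections satisfy $|g|_v|\le(|g|+1)/2$, so for every $g$ there is $n$ with all $g|_v$, $v\in\{0,1\}^n$, lying in the nucleus $\{1,a,b,c,d\}$. Hence every $x\in B$ has $\psi_\pi^n(x)\in M_{2^n}\bigl(\Span\{1,\bar a,\bar b,\bar c,\bar d\}\bigr)$ for some $n$. One then checks, by an explicit case analysis on this $5$-dimensional space using the recursion \eqref{eq:8}, that any nonzero element acquires a nonzero scalar entry after at most five further applications of $\psi_\pi$ (Lemma~\ref{Lemma4}). This computation is the genuine content of the proof and is exactly where the relations in $\ker\pi$ (e.g.\ the one in Lemma~\ref{lm:z}) do their work. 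Your proposal neither invokes contraction nor offers a substitute, so as it stands the argument is incomplete.
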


\noindent The notions of self-similarity of the algebras $B$ and $C^*_\pi(\cG)$ will be explained below. Theorem~\ref{thm:B} above is proved at the end of this section. 

The  type of just-infinite algebras (also called ``thin algebras'') considered above were studied for the first time by  Sidki in  \cite{Sidki:Burnside}.   The  group  used  by  Sidki  was the Gupta--Sidki  3-group $H$, and  the  algebra  was  defined  over  a  field ${\mathbb{F}}_3$   in a rather involved way as   a certain  inductive  limit.  If  one  considers the ``Koopman"  representation of  $H$ over the field $\mathbb{F}_3$,  then  the  image  of the group  algebra ${\mathbb{F}}_3[H]$ will  be   isomorphic  to  Sidki's thin  algebra.

The \Cs{}  generated  by  the Koopman  representation of the group $\cG$ (considered in this section) was considered in \cite{BG00}, and so was the algebra  $B$, even though it was not explicitly defined there.
Vieira, \cite{Vieira-2001},  used  Sidki's  approach  to  define a ``thin  algebra''  of  the group $\cG$ over the field $\mathbb{F}_2$,  and proved  that  it  is  just-infinite.

Thin  algebras  under  the  name  ``Tree  enveloping  algebras"   were  considered  by  Bartholdi  in \cite{Bartholdi-IJM-2006}.  He  defines  algebras,  similar to the algebra $B$ in Theorem~\ref{thm:B}, however, over  arbitrary   fields.  He  considers  a  vector  space   with a  basis  consisting  of  all  points   of  the  boundary  of  the  rooted  tree,  and  then  defines an algebra  as the  image  of  the group  algebra  in  the  algebra  of   endomorphisms  of  this  huge  vector  space.
One  can  show   that  if  the  field  is   complex  numbers  and  the  group  is  the group $\cG$,  then   Bartholdi's algebra  is  isomorphic  to  the algebra $B$ we are considering here.

In \cite[Theorem  3.9]{Bartholdi-IJM-2006},   a  sufficient  condition  is given for the tree  enveloping   algebra  to  be   just-infinite.   This  condition  is  satisfied  in  the  case  of  the group $\cG$.

\begin{example} \label{ex:G}
As mentioned above, the group $\cG$ is just-infinite. We can therefore deduce from Theorem~\ref{thm:B}(iii) that just-infiniteness of a group $\cG$ does not imply that its complex group algebra $\C[\cG]$ is $^*$-just-infinite.
\end{example}

\subsection*{Self-similarity of graphs, Hilbert spaces, representations and algebras}

\noindent Let $X = \{x_1,x_2, \dots, x_d\}$ be an alphabet on $d \ge 2$ letters, let $X^* = \bigsqcup_{n=0}^\infty X^n$ be the set of words over $X$, and let $T = T_X$ be the $d$-arnery rooted tree whose vertices are in bijection with the elements of $X^*$ (so that the $n$th level $V_n$ of $T$ corresponds to $X^n$). The action of an arbitrary group $G$ on $T$ by automorphisms induces an action $G \curvearrowright X^*$. This action is said to be \emph{self-similar} if for all $g \in G$ and all $x \in X$, there are $h \in G$ and $y \in X$ such that $g(xw) = yh(w)$, for all $w \in X^*$. If this holds, then for every $v \in X^*$, there exists a unique $h \in G$ satisfying, for all $w \in X^*$,
\begin{equation} \label{eq:self-similar}
g(vw) = g(v)h(w).
\end{equation}
The element $h$ is called the section (or restriction) of $g$ in $v$, and is denoted by $h = g|_v$. For example, for the group $\cG = \langle a,b,c,d \rangle$ under examination, we have; cf.\ \eqref{eq:2}, that
$$a|_{v_0} = a|_{v_1} = 1, \qquad b|_{v_0} = a, \quad b|_{v_1} = c, \qquad c|_{v_0} = a, \quad c|_{v_1} = d, \qquad d|_{v_0} = 1, \quad d|_{v_1} = b.$$

Let $H$ be a separable infinite dimensional Hilbert space, and fix an integer $d \ge 2$. A unitary operator $u \colon H \to H^d = H \oplus H \oplus \cdots \oplus H$ is called a \emph{$d$-similarity} of $H$. Each $d$-similarity arises from $d$ isometries $s_1, \dots, s_d$ on $H$, satisfying $\sum_{j=1}^d s_js_j^* = 1$, as follows:
$$u(\xi) = (s_1^*\xi, \dots, s_d^*\xi), \qquad u^*(\xi_1, \dots, \xi_d) = \sum_{j=1}^d s_j \xi_j, 
$$
for $\xi, \xi_1, \dots, \xi_d \in H$. Observe that $s_1, \dots, s_d$ define a representation of the Cuntz algebra $\cO_d$, and that every representation of $\cO_d$ is obtained in this way.  For each $x = (x_{i_1},x_{i_2}, \dots, x_{i_n}) \in X^*$ consider the isometry on $H$ given by $S_x = s_{i_1}s_{i_2} \dots s_{i_n}$, and observe that $S_xS_y = S_{xy}$.  A unitary representation $\rho$ of a group $G$ on a Hilbert space $H$ is said to be \emph{self-similar} with respect to the $d$-similarity $\psi$ considered in \eqref{eq:psi-G} above, if
\begin{equation} \label{eq:rho-ss}
\rho(g)S_x = S_{y} \rho(h),
\end{equation}
for all $g,h \in G$ and all $x,y \in X^*$ satisfying $g(xw) = yh(w)$, for all $w \in X^*$. In other words,
$\rho(g)S_x = S_{g(x)}\rho(g|_x)$, for all $g \in G$ and $x \in X^*$. 

The image $B_\rho = \rho(\C[G])$, where $\rho$ is a self-similar representation, is called a \emph{self-similar} (abstract) \emph{algebra}. The \Cs{} $C^*_\rho(G)$ associated with a self-similar representation $\rho$ is called a \emph{self-similar \Cs}. One of the features of  the self-similar algebra $B_\rho$ (or of the \Cs{} $C^*_\rho(G)$) is the existence of the unital embedding
\begin{equation} \label{eq:psi}
\psi_\rho \colon B_\rho \to M_d(B_\rho), \qquad b \mapsto \begin{pmatrix} s_1^*bs_1 & \cdots & s_1^*bs_d \\ \vdots & &\vdots\\ s_d^*bs_1 & \cdots&  s_d^*bs_d  \end{pmatrix}, 
\end{equation}
for $b \in B_\rho$. It follows from \eqref{eq:rho-ss} that $s_j^*B_\rho s_i \subseteq B_\rho$, for all $i,j$. The embedding $\psi_\rho$ is typically not surjective. Nonetheless, it has many interesting and non-trivial features, see, for example, Lemma~\ref{Lemma2} below. 

In the case of our main example $\cG = \langle a,b,c,d \rangle$ and of the Koopman representation $\pi$ of $\cG$ on $H=L^2(\partial T, \mu)$, we have an explicit self-similarity $H \to H \oplus H$ arising from the two isometries $s_0,s_1$ on $H$ defined by
\begin{equation} \label{eq:s_i}
(s_if)(x) = f(ix), \qquad
\end{equation}
for $i=1,2$, where  $f \in L^2(\partial T, \mu)$ and $x \in \partial T$, and where $ix \in \partial T = \{0,1\}^\N$ is the word obtained by putting the letter $i$ in front of the word $x$. The resulting embedding $\psi_\pi \colon B \to M_2(B)$ is given as follows on the generators:
\begin{equation} \label{eq:8}
\psi_\pi(\bar{a}) = \begin{pmatrix} 0 & 1 \\ 1 & 0 \end{pmatrix}, \quad \psi_\pi(\bar{b}) = \begin{pmatrix} \bar{a} & 0 \\ 0 & \bar{c} \end{pmatrix}, \quad \psi_\pi(\bar{c}) = \begin{pmatrix} \bar{a} & 0 \\ 0 & \bar{d} \end{pmatrix}, \quad \psi_\pi(\bar{d}) = \begin{pmatrix} 1 & 0 \\ 0 & \bar{b} \end{pmatrix},
\end{equation}
(as can be deduced from \eqref{eq:rho-ss} and \eqref{eq:psi}), where we have introduced the notation $\bar{g} = \pi(g)$, for $g \in \cG$. (The Koopman representation is faithful on $\cG$, so the map $g \mapsto \bar{g}$ is injective, but the Koopman representation is not faithful on $\C[\cG]$; cf.\ Theorem~\ref{thm:B}, so it is pertinent to distinguish between $g$ and $\pi(g)$.)

\subsection*{More on the Koopman representation}

\noindent What we are going to present here is known in the more general situation of groups acting on rooted trees, \cite{BG00, BeHa03, Gr05}. Consider the binary rooted tree $T$ (as described in \eqref{eq:T} and \eqref{eq:2}), and the Koopman representation $\pi$  of the group $\cG = \langle a,b,c,d \rangle$ on $L^2(\partial T, \mu)$. 

For each $n \ge 1$, let $v_{n,1}, v_{n,2}, \dots, v_{n,2^n}$ be the order preserving enumeration of the set $V_n = \{0,1\}^n$ (equipped with the lexicographic ordering); cf.\ \eqref{eq:T1}, and write $\partial T = \bigsqcup_{i=1}^{2^n} E_{n,i}$, where $E_{n,i}$ is the set of infinite words in $\partial T = \{0,1\}^\N$ that start with $v_{n,i}$. Set
$$H_n = \Span \{\chi_{E_{n,i}} \mid i = 1,2, \dots, 2^n\} \subseteq H = L^2(\partial T,\mu),$$
which is a subspace of dimension $2^n$. Since $E_{n,i} = E_{n+1,2i-1} \cup E_{n+1,2i}$, we see that $H_n \subseteq H_{n+1}$. Moreover, as the cylinder sets $E_{n,i}$, $n \ge 1$, $1 \le i \le 2^n$, form a basis for the topology on $\partial T$, it follows that $\bigcup_{n=1}^\infty H_n$ is dense in $H$. 

The subspaces $H_n$ are $\pi$-invariant. Let $\pi_n$ be the restriction of $\pi$ to $H_n$, for $n \ge 1$. Observe that $\pi_n$ is unitarily equivalent to the representation of $\cG$ on $\ell^2(V_n)$ arising from its action on the $n$th level $V_n$ of the tree $T$. More specifically, identify $H_n$ with $\ell^2(V_n)$ via the isomorphism that identifies $\chi_{E_{n,i}}$ with $\delta_{v_{n,i}}$. Write $H_{n+1} = H_n \oplus H_n^\perp$, and let $\pi_n^\perp$ denote the restriction of $\pi$ to $H_n^\perp$. Note that $H_n^\perp$ has dimension $2^n$. It is shown in the appendix of \cite{BeHa03} that the representation $\pi_n^\perp$ of $\cG$ is irreducible, for each $n \ge 1$. Thus we have decompositions
\begin{equation} \label{eq:K}
H = \C \oplus \bigoplus_{n=0}^\infty H_n^\perp, \qquad \pi = {\bf{1}} \oplus \bigoplus_{n=0}^\infty \pi_n^\perp,
\end{equation}
of the Hilbert space $H$ and of the representation $\pi$ into irreducible representations, where we identify $H_0$ with $\C$, and $\pi_0$ with the trivial representation $\bf{1}$. 

\subsection*{The proof of Theorem~\ref{thm:B}}

\noindent \emph{Proof of Theorem~\ref{thm:B} {\rm{(i)}}}:
Recall from \eqref{eq:s_i} that we have isometries $s_0,s_1$ on the Hilbert space $H = L^2(\partial T,\mu)$ satisfying the Cuntz relation $s_0s_0^* + s_1s_1^* = 1$. The range of the isometry $s_i$ is $L^2(\partial T_i, \mu_i)$, where $T_0$ and $T_1$ are the subtrees of $T$ with roots $v_0$ and $v_1$, respectively; cf.\ \eqref{eq:T}, and where $\mu_0$ and $\mu_1$ are the normalized restrictions of $\mu$ to the subsets $\partial T_0$ and $\partial T_1$, respectively, (making them probability measures). The Koopman representation $\pi$ is self-similar with respect to the $2$-similarity of $H$ given by the isometries $s_0,s_1$, so $B = \pi(\C[\cG])$ is self-similar.

By \eqref{eq:K} and irreducibility of the representations $\pi_n^\perp$, we see that $B$ is a subalgebra of
\begin{equation} \label{eq:MF}
M:= \C \oplus \prod_{n=0}^\infty M_{2^n}(\C),
\end{equation}
with the property that the projection of $B$ onto each summand in \eqref{eq:MF} is surjective. Hence $B$ is infinite dimensional and RFD. This completes the proof of (i).

\medspace
\noindent \emph{Proof of Theorem~\ref{thm:B} {\rm{(ii)}}}: It follows from \eqref{eq:K} that the inclusion of $B$ into $M$ is isometric, when $B$ is equipped with the norm arising from the Koopman representation $\pi$. Thus $C^*_\pi(\cG)$, which is the completion of $B$ with respect to this norm, embeds into $M$. Hence $C^*_\pi(\cG)$ is RFD. Moreover, it is infinite dimensional because it contains the infinite dimensional algebra $B$, and it is self-similar because the Koopman representation $\pi$ is self-similar. Finally, $M$ has a faithful trace, for example the one given by
$$\tau(x) = \sum_{j=-1}^\infty \alpha_j \tau_{j}(x_{j}),$$
where $x = (x_1,x_0,x_1,\dots) \in M$, $\tau_{n}$ is the normalized trace on $M_{2^n}(\C)$, for each $n \ge 1$ (and $\tau_{-1}$ and $\tau_0$ are the normalized traces on $\C$), and $\{\alpha_j\}_{j=-1}^\infty$ is any sequence of strictly positive numbers summing up to $1$. Hence $C^*_\pi(\cG)$ has a faithful trace, being a sub-\Cs{} of $M$.

\medskip
\noindent \emph{Proof of Theorem~\ref{thm:B} {\rm{(iii)}}}:
The first claim of (iii) is proved in the lemma below, and the second claim follows from the first claim and the fact, proved in (i), that $B$ is infinite dimensional. 

\medskip \noindent
The result below can be found in \cite{GrN07}. We include its  proof for completeness of the exposition.

\begin{lemma} \label{lm:z} 
$(1-d)a(1-d)$
is a non-zero element in the kernel of $\pi \colon \C[\cG] \to B$.
\end{lemma}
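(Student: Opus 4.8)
The statement has two parts: that $z:=(1-d)a(1-d)$ is a nonzero element of $\C[\cG]$, and that $\pi(z)=0$ in $B$. The plan is to verify these separately; only the second uses self-similarity.

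For the first part I would expand $z=a-ad-da+dad$ and note that, since the group elements form a $\C$-basis of $\C[\cG]$, it suffices to show that $a$, $ad$, $da$, $dad$ are pairwise distinct in $\cG$. Using only $a^2=d^2=1$ one checks that every coincidence among these four forces either $d=1$ (for instance $a=ad$ gives $1=d$ after left-multiplying by $a$) or $ad=da$ (for instance $a=dad$ gives $ad=da$ after right-multiplying by $d$), and similarly for the remaining pairs. The first alternative is impossible since $d\ne 1$. The second is impossible because $ad=da$ would give $(ad)^2=(ad)(da)=ad^2a=1$, whereas, using $\psi(a)=\varepsilon$ and $\psi(d)=(1,b)$ from \eqref{eq:psi-G}, one has $\psi\big((ad)^2\big)=\big(\varepsilon(1,b)\big)^2=\big((b,1)\varepsilon\big)^2=(b,1)(1,b)=(b,b)\ne(1,1)$, since $b\ne1$ by faithfulness of the action of $\cG$ on $T$. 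Hence the four group elements are distinct and $z\ne0$.

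For the second part I would use the self-similar embedding $\psi_\pi\colon B\to M_2(B)$ and the explicit formulas \eqref{eq:8}. Writing $\bar g=\pi(g)$, these give $\psi_\pi(1-\bar d)=\Smatrix{0}{0}{0}{1-\bar b}$ and $\psi_\pi(\bar a)=\Smatrix{0}{1}{1}{0}$, so
\[
\psi_\pi\big((1-\bar d)\,\bar a\,(1-\bar d)\big)
=\begin{pmatrix}0&0\\0&1-\bar b\end{pmatrix}\begin{pmatrix}0&1\\1&0\end{pmatrix}\begin{pmatrix}0&0\\0&1-\bar b\end{pmatrix}
=\begin{pmatrix}0&0\\1-\bar b&0\end{pmatrix}\begin{pmatrix}0&0\\0&1-\bar b\end{pmatrix}=0 .
\]
Since $\psi_\pi$ is injective (it is an embedding), this forces $(1-\bar d)\,\bar a\,(1-\bar d)=0$ in $B$, and as $\pi$ is an algebra homomorphism this is exactly the assertion $\pi(z)=0$.

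It is worth recording that $\pi(z)=0$ can also be seen directly on $H=L^2(\partial T,\mu)$: by \eqref{eq:2} the generator $d$ acts trivially inside the first subtree $T_0$, so $\bar d$ restricts to the identity on $s_0H=L^2(\partial T_0,\mu)$ and leaves $s_1H=L^2(\partial T_1,\mu)$ invariant; hence $1-\bar d$ annihilates $s_0H$ and maps all of $H$ into $s_1H$. Since $\bar a$ interchanges $s_0H$ and $s_1H$, the operator $\bar a(1-\bar d)$ has range contained in $s_0H$, which the outer factor $1-\bar d$ then kills. I do not anticipate any serious obstacle in this argument: the only step that appeals to the specific structure of $\cG$ rather than to formal manipulation is the noncommutativity of $a$ and $d$, which is exactly what the self-similarity relations \eqref{eq:2}--\eqref{eq:psi-G} provide.
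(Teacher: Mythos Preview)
Your proof is correct and follows essentially the same approach as the paper: expand $z$ in the group basis to see it is nonzero, then apply the matrix embedding $\psi_\pi$ from \eqref{eq:8} and use its injectivity to conclude $\pi(z)=0$. The only difference is that for the nonvanishing you prove the stronger fact that $a,ad,da,dad$ are pairwise distinct (invoking a self-similarity computation of $(ad)^2$ to rule out $ad=da$), whereas the paper observes more directly that $z=0$ would force the multiset equality $\{a,dad\}=\{ad,da\}$, and either matching immediately yields $d=1$.
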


\begin{proof} We observe first that $z :=  a - da - ad + dad$ 
is non-zero in $\C[\cG]$. Indeed, if $z=0$, then $a+dad=da+ad$, which can happen only if either $a = da$ and $dad = ad$, or $a = ad$ and $dad = da$. Both are impossible, because $d \ne e$. (It is also easy to see, for example using the action of $\cG$ of the tree $T$, that the four elements $a, da,ad, dad$ are pairwise distinct.) 

By \eqref{eq:8} (and retaining the notation $\bar{g} = \pi(g)$, for $g \in \cG$), we have
$$\psi_\pi(\pi(z)) = \psi_\pi\big((1-\bar{d})\bar{a}(1-\bar{d})\big) = \begin{pmatrix} 0 & 0 \\ 0 & 1-\bar{b}\end{pmatrix}\begin{pmatrix} 0 & 1 \\ 1 & 0\end{pmatrix}\begin{pmatrix} 0 & 0 \\ 0 & 1-\bar{b}\end{pmatrix} =0,$$
where $\psi_\pi$ is the embedding of $B$ into $M_2(B)$ arising from self-similarity. 
As $\psi_\pi$ is injective, this implies that $\pi(z) = 0$.
\end{proof}

\noindent
The proof of part (iv) of Theorem~\ref{thm:B} is somewhat lengthy and is divided into  several lemmas. The proof mimics the proof of the fact that $\cG$ is a just-infinite group, as well as the idea from the proof of \cite[Theorem 4]{Gr00} showing that a proper quotient of an arbitrary branch group is virtually abelian. In our situation, the following statement from \cite[Proposition~8]{Gr00} is useful:

\begin{proposition} \label{prop:K} The normal subgroup $K = \langle (ab)^2 \rangle^\cG$ has finite index 16 in $\cG$, and it is of self-replicating type, written $K \times K \prec K$, i.e., $K \times K \subseteq \psi(K)$, where $\psi$ is given by \eqref{eq:psi-G}.
\end{proposition}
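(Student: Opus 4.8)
The plan is to work inside the first-level stabilizer $\cG_1 = \{g\in\cG : g(v_0)=v_0\}$, the index-$2$ normal subgroup on which $\psi$ restricts to an embedding into $\cG\times\cG$ (with trivial $\varepsilon$-component). Since $a$ interchanges $v_0,v_1$ while $b$ fixes them, $(ab)^2\in\cG_1$, so $K=\langle(ab)^2\rangle^\cG\subseteq\cG_1$ and $\psi(K)\subseteq\cG\times\cG$. From \eqref{eq:psi-G} and \eqref{eq:2} one reads off $\psi(a)=\varepsilon$, $\psi(b)=(a,c)$, $\psi(d)=(1,b)$, and hence
$$\psi\big((ab)^2\big)=(ca,\,ac),\qquad \psi(ada)=\varepsilon\,(1,b)\,\varepsilon=(b,1).$$

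For $K\times K\subseteq\psi(K)$ the key point is a commutator identity. Since $(ab)^2\in K\trianglelefteq\cG$, the element $[(ab)^2,ada]=(ab)^2\,(ada)\,(ab)^{-2}\,(ada)^{-1}$ lies in $K$, and
$$\psi\big([(ab)^2,ada]\big)=(ca,ac)(b,1)(ac,ca)(b,1)=\big(cabacb,\ ac\cdot ca\big)=\big(c(ab)^2c,\ 1\big),$$
using $bc=cb$ in the first coordinate (so $cabacb=cababc=c(ab)^2c$) and $c^2=a^2=1$ in the second. Thus $(c(ab)^2c,1)\in\psi(K)$. Now set $N=\{k\in\cG : (k,1)\in\psi(K)\}$, clearly a subgroup of $\cG$. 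It is normal: the fractal property of $\cG$ (the first-coordinate projection $\cG_1\to\cG$ of $\psi$ is onto, which \eqref{eq:2} makes immediate once one recalls that $\cG_1$ is generated by $b,c,d,aba,aca,ada$) provides, for any $h\in\cG$, an element $\tilde h\in\cG_1$ with $\psi(\tilde h)=(h,\ast)$; if $\kappa\in K$ with $\psi(\kappa)=(k,1)$ then $\tilde h\kappa\tilde h^{-1}\in K$ and $\psi(\tilde h\kappa\tilde h^{-1})=(hkh^{-1},1)$. As $N$ is a normal subgroup containing the conjugate $c(ab)^2c$ of $(ab)^2$, it contains $\langle c(ab)^2c\rangle^\cG=K$, so $K\times\{1\}\subseteq\psi(K)$. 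Conjugating by $\psi(a)=\varepsilon$ and using $aKa=K$ gives $\{1\}\times K\subseteq\psi(K)$, whence $K\times K\subseteq\psi(K)$.

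It remains to show $[\cG:K]=16$. For the upper bound, pass to $\overline\cG=\cG/K$: here $(ab)^2=[a,b]$ dies, so $\bar a,\bar b$ commute; combined with $d=bc$, $[\bar b,\bar c]=1$ and the Lysenok relator $(ad)^4$ this forces $(\bar a\bar c)^4=1$; then $\bar b$ is central, so $\overline\cG$ is a quotient of $D\times\Z/2$ with $D=\langle x,y\mid x^2,y^2,(xy)^4\rangle$ dihedral of order $8$, giving $[\cG:K]\le16$. For the lower bound, exhibit a surjection $\varphi\colon\cG\to D\times\Z/2$ with $(ab)^2\in\ker\varphi$: put $\varphi(a)=(x,0)$, $\varphi(c)=(y,0)$, $\varphi(b)=(1,1)$, $\varphi(d)=\varphi(b)\varphi(c)=(y,1)$. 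One checks directly that $\varphi$ kills every non-$\sigma$ relator of \eqref{eq:G-relators} and that $\varphi((ab)^2)=(x,1)^2=(1,0)$; to dispatch the relators $\sigma^k((ad)^4)$ and $\sigma^k((adacac)^4)$ at once, verify that $\theta(x,0)=(xyx,0)$, $\theta(y,0)=(1,1)$, $\theta(1,1)=(y,1)$ defines an endomorphism $\theta$ of $D\times\Z/2$ (a finite relation check) with $\varphi\circ\sigma=\theta\circ\varphi$ on generators, so that $\varphi(\sigma^k(r))=\theta^k(\varphi(r))=1$ for every base relator $r$. Then $\varphi$ is well-defined and surjective with $K\subseteq\ker\varphi$, so $[\cG:K]\ge16$, and equality follows.

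The main obstacle is the index computation rather than the branching: the upper bound demands verifying that no relation beyond $\bar a^2=\bar b^2=\bar c^2=1$, $[\bar a,\bar b]=[\bar b,\bar c]=1$, $(\bar a\bar c)^4=1$ survives in $\cG/K$ (i.e.\ that all of Lysenok's relators become consequences of these), and the lower bound relies on the $\sigma$-equivariance trick $\varphi\circ\sigma=\theta\circ\varphi$ to reduce the infinitely many Lysenok relators to a finite check. Once the commutator identity $\psi([(ab)^2,ada])=(c(ab)^2c,1)$ is spotted, the self-replicating step $K\times K\subseteq\psi(K)$ is short. (Alternatively, one simply cites \cite[Proposition~8]{Gr00}.)
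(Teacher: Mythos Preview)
Your argument is correct. Note, however, that the paper does not actually prove this proposition: it is quoted verbatim from \cite[Proposition~8]{Gr00} and used as a black box, exactly the alternative you mention in your final parenthetical. So there is no ``paper's own proof'' to compare against beyond the citation.

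That said, your self-contained treatment is clean and worth keeping. The branching step via the commutator identity $\psi([(ab)^2,ada])=(c(ab)^2c,1)$ together with the fractal/normality argument for $N=\{k:(k,1)\in\psi(K)\}$ is the standard route and you execute it correctly. For the index, your upper bound (reducing $\cG/K$ to a quotient of $D_8\times\Z/2\Z$ using $(ad)^4=1$ and centrality of $\bar b$) and lower bound (the explicit surjection $\varphi$ with the $\sigma$-equivariance trick $\varphi\circ\sigma=\theta\circ\varphi$ to handle the infinite Lysenok family in one stroke) are both sound; the verification that $\theta$ is a well-defined endomorphism of $D_8\times\Z/2\Z$ goes through since $(xyx)^2=1$ and $(y,1)$ commutes with $(xyx,0)$ in $D_8\times\Z/2\Z$ (both reduce to $(xy)^2=(yx)^2$, which holds as $(xy)^2$ is central of order~$2$).
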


\noindent Let $\psi\colon \cG \to (\cG \times \cG) \rtimes \Z/2\Z$ be as defined in \eqref{eq:psi-G}. For each $m \ge 1$, the stabilizer subgroup $\mathrm{St}_\cG(m)$ of $\cG$, with respect to the action of $\cG$ on the tree $T$, is the set of elements $g \in \cG$ that fix all vertices at level $m$, i.e., all vertices in $V_m$. In particular, if $g \in \mathrm{St}_\cG(1)$, then $\psi(g) \in \cG \times \cG$. The group $K$ is a subgroup of $\mathrm{St}_\cG(1)$.

It is also shown in \cite{Gr80}  that $\cG$ itself is self-replicating (or recurrent), in the sense that $\mathrm{St}_\cG(1) \le_S  \cG \times \cG$, where $\le_S$ is \emph{subdirect product}. This means that the group homomorphisms
$$\xymatrix{\mathrm{St}_\cG(1) \ar[r]^\psi & \cG \times \cG \ar[r]^-{\pi_j} & \cG,} $$
where $\pi_j$, $j=0,1$, are the coordinate homomorphisms, are surjective.

Let $\Delta$ be the ideal in $B$ generated by the set $\{\bar{k}-1 \mid k \in K\} \subseteq B$, where $K$ is as in Proposition~\ref{prop:K} above. Then $B/\Delta$ has dimension at most 16. To see this, let $\{t_1, t_2,\dots, t_{16}\}$ be representatives of the cosets of $K$ in $\cG$. For each $g \in \cG$, there exist $i$ in $\{1,2, \dots, 16\}$ and $k$ in $K$ such that $g = t_i k = t_i + t_i(k-1)$, so $\bar{g} \in \bar{t_i} + \Delta$. This shows that $B/\Delta$ is the linear span of the elements $\bar{t_1} +\Delta, \bar{t_2} +\Delta, \dots, \bar{t}_{16}+\Delta$. 

Let $\psi_\pi \colon B \to M_2(B)$ be as defined in \eqref{eq:psi}, and let $\psi_\pi^n \colon B \to M_{2^n}(B)= B \otimes M_{2^n}(\C)$ denote the ``$n$th iterate of $\psi_\pi$'', in the sense that 
$$\psi_\pi^n = (\psi_\pi \otimes \mathrm{id}_{M_{2^{n-1}}(\C)}) \circ \cdots \circ  (\psi_\pi \otimes \mathrm{id}_{M_{2}(\C)})  \circ \psi_\pi.$$
 The homomorphisms $\psi_\pi^n$ are not surjective, but the following holds:

\begin{lemma} \label{Lemma2}
For each $n \ge 1$,  $M_{2^n}(\Delta) \subseteq \psi^n_\pi(\Delta)$.
\end{lemma}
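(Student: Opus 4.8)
The claim is that $M_{2^n}(\Delta) \subseteq \psi_\pi^n(\Delta)$ for all $n \geq 1$. The natural strategy is induction on $n$, with the base case $n=1$ carrying the main content. For the base case, I want to show $M_2(\Delta) \subseteq \psi_\pi(\Delta)$. Since $\Delta$ is the ideal of $B$ generated by $\{\bar k - 1 : k \in K\}$, a generic element of $M_2(\Delta)$ is a $2\times 2$ matrix over $B$ each of whose entries is a (finite) sum $\sum_j b_j(\bar k_j - 1)b_j'$ with $b_j, b_j' \in B$ and $k_j \in K$. The key algebraic input is Proposition~\ref{prop:K}: $K \times K \subseteq \psi(K)$, which at the level of the $^*$-algebra says that for every pair $(k', k'')$ of elements of $K$, the diagonal matrix $\mathrm{diag}(\bar{k'}, \bar{k''})$ lies in $\psi_\pi(\bar K)$, hence $\mathrm{diag}(\bar{k'} - 1, \bar{k''}-1) = \psi_\pi(\bar\ell - 1)$ for a suitable $\ell \in K$ (taking $k'$ in one slot and $1$ in the other, and vice versa, then adding). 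Thus diagonal matrices with entries of the form $\bar k - 1$ are in $\psi_\pi(\Delta)$, and then by multiplying on the left and right by elements of $\psi_\pi(B)$ — which I need to see land back inside $M_2(B)$, and they do by the self-similarity relation $s_j^* B s_i \subseteq B$ noted after \eqref{eq:psi} — one gets all of $M_2(\Delta)$.

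**Filling in the reduction to diagonal generators.** There is a subtlety: an arbitrary element of $M_2(\Delta)$ has its "$\Delta$-part" spread across off-diagonal positions too, so I should first reduce to showing that each matrix unit $e_{pq} \otimes (\bar k - 1)$, $p,q \in \{1,2\}$, $k \in K$, lies in $\psi_\pi(\Delta)$ (then $M_2(\Delta)$, being the $B$-bimodule — in fact $M_2(B)$-bimodule, using $\psi_\pi(B) \cdot M_2(B) \subseteq M_2(B)$ — generated by these, is contained in $\psi_\pi(\Delta)$ because $\psi_\pi(\Delta)$ is an ideal in $\psi_\pi(B)$ hence absorbs multiplication by $\psi_\pi(B)$ on both sides, and matrix units $e_{pq}$ are obtained from the $s_i$'s: $e_{pq} = \psi_\pi(\text{something})$? — here I must be careful). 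The cleanest route: observe $\psi_\pi$ is a unital embedding and $M_{2}(\Delta)$ is the ideal of $M_2(B)$ generated by $\{1 \otimes (\bar k -1) : k \in K\} = \{\psi_\pi(\bar k - 1)' \dots\}$ — actually $1_{M_2} \otimes (\bar k - 1)$ is not obviously in $\psi_\pi(\Delta)$, so instead I use the diagonal elements $\mathrm{diag}(\bar k - 1, 0)$ and $\mathrm{diag}(0, \bar k - 1)$ which, as noted, ARE in $\psi_\pi(\Delta)$ by Proposition~\ref{prop:K}. Since $M_2(\Delta)$ is generated as an ideal of $M_2(B)$ by $\{\mathrm{diag}(\bar k -1, 0), \mathrm{diag}(0,\bar k - 1) : k \in K\}$ (standard: $M_2(J)$ for an ideal $J$ is the ideal of $M_2(A)$ generated by $J \otimes e_{11}$), and $\psi_\pi(\Delta)$ is an ideal in $M_2(B) \supseteq \psi_\pi(B)$ — wait, $\psi_\pi(\Delta)$ need only be an ideal in $\psi_\pi(B)$, not in $M_2(B)$. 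So I genuinely need $M_2(B) \cdot \psi_\pi(\Delta) \cdot M_2(B) \subseteq \psi_\pi(\Delta)$, which requires $M_2(B)\,\psi_\pi(B)\,M_2(B) \subseteq \psi_\pi(B)$ — this is FALSE in general since $\psi_\pi$ isn't surjective. The correct fix: use that $\psi_\pi(\Delta)$ absorbs $\psi_\pi(B)$, and separately verify $M_2(B) = \psi_\pi(B) + (\text{stuff})$... This is exactly the anticipated obstacle.

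**The main obstacle.** The crux is that $\psi_\pi$ is not surjective, so conjugating a diagonal element $\psi_\pi(\bar k - 1)$ by an arbitrary matrix unit need not stay in $\psi_\pi(B)$. The resolution, which I expect the authors to use, exploits self-replication of $K$ more fully: Proposition~\ref{prop:K} gives $K \times K \subseteq \psi(K)$, and $\cG$ itself is self-replicating ($\mathrm{St}_\cG(1) \le_S \cG \times \cG$), so conjugates of $\bar k - 1$ by images $\bar g$, $g \in \mathrm{St}_\cG(1)$, produce (via $\psi_\pi$) diagonal matrices $\mathrm{diag}(\overline{g_0 k g_0^{-1}} - 1, \dots)$ again with entries of the form $\bar k' - 1$, $k' \in K$ (since $K \trianglelefteq \cG$); and the non-stabilizer generator $\bar a$ with $\psi_\pi(\bar a) = \bigl(\begin{smallmatrix} 0 & 1 \\ 1 & 0\end{smallmatrix}\bigr)$ swaps the two diagonal slots. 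Between the normality of $K$, the surjectivity of the coordinate projections $\mathrm{St}_\cG(1) \to \cG$, and the swap $\bar a$, one generates enough of $M_2(B)$ acting on the diagonal generators to recover all of $M_2(\Delta)$; in fact one shows $M_2(\Delta)$ is already generated as an ideal of $\psi_\pi(B)$ (not $M_2(B)$) by the diagonal elements $\psi_\pi(\bar\ell - 1)$ together with conjugates by $\psi_\pi(\bar g)$. For the inductive step $n \to n+1$, apply $\psi_\pi \otimes \mathrm{id}_{M_{2^n}}$ to the inclusion $M_2(\Delta) \subseteq \psi_\pi(\Delta)$ to get $M_2(M_{2^n}(\Delta)) \subseteq (\psi_\pi \otimes \mathrm{id})(M_{2^n}(\Delta))$, i.e. $M_{2^{n+1}}(\Delta) \subseteq (\psi_\pi \otimes \mathrm{id}_{M_{2^n}})(M_{2^n}(\Delta))$, and then apply the induction hypothesis $M_{2^n}(\Delta) \subseteq \psi_\pi^n(\Delta)$ inside, yielding $M_{2^{n+1}}(\Delta) \subseteq (\psi_\pi \otimes \mathrm{id}_{M_{2^n}})(\psi_\pi^n(\Delta)) = \psi_\pi^{n+1}(\Delta)$ by definition of $\psi_\pi^{n+1}$. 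I expect the base case, and in particular making precise exactly which $\psi_\pi(\bar g)$'s are needed to conjugate the diagonal generators up to all of $M_2(\Delta)$, to be the delicate part; the inductive step is purely formal.
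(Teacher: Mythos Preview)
Your proposal is correct and follows essentially the same route as the paper: induction on $n$ with a formal inductive step, and for the base case $n=1$ the three ingredients you isolate---Proposition~\ref{prop:K} (giving $\mathrm{diag}(\bar k - 1, 0) \in \psi_\pi(\Delta)$), the self-replicating property of $\cG$ (surjectivity of the coordinate maps $\mathrm{St}_\cG(1) \to \cG$), and the swap $\psi_\pi(\bar a) = \bigl(\begin{smallmatrix} 0 & 1\\ 1 & 0\end{smallmatrix}\bigr)$---are exactly what the paper uses.

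One small sharpening: your remark about conjugating $\bar k - 1$ by $\bar g$ and invoking normality of $K$ is a detour; conjugation only moves you within the generating set $\{\bar{k'}-1 : k' \in K\}$ and does not produce the whole ideal $\Delta$ in the $(1,1)$-slot. The paper's cleaner move is to \emph{multiply} (not conjugate): if $x' \in \Delta$ satisfies $\psi_\pi(x') = \mathrm{diag}(x,0)$, and $g \in \mathrm{St}_\cG(1)$ has $\psi(g) = (f,h)$ with $f \in \cG$ arbitrary (by self-replication), then $\bar g x', x'\bar g \in \Delta$ and $\psi_\pi(\bar g x') = \mathrm{diag}(\bar f x, 0)$, $\psi_\pi(x' \bar g) = \mathrm{diag}(x \bar f, 0)$. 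Starting from the generators $\mathrm{diag}(\bar k - 1, 0)$ and iterating left/right multiplication by all $\bar f$, one obtains $\mathrm{diag}(\Delta, 0) \subseteq \psi_\pi(\Delta)$ directly, since $\Delta$ is the two-sided ideal generated by the $\bar k - 1$. Then $\bar a$ moves this to the other three matrix-unit positions. Normality of $K$ is not needed explicitly.
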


\begin{proof}
The lemma follows easily by induction on $n$, once the base step $n=1$ has been verified. So let us show that $M_{2}(\Delta) \subseteq \psi_\pi(\Delta)$.

It follows from Proposition~\ref{prop:K}  that for each $k \in K$ we can find $k' \in K$ such that $\psi(k') = (k,1)$. Hence
\begin{equation} \label{eq:i}
\psi_\pi(\bar{k'}) = \begin{pmatrix} \bar{k} & 0 \\ 0 & 1 \end{pmatrix}, \qquad 
\psi_\pi(\bar{k'}-1) = \begin{pmatrix} \bar{k}-1 & 0 \\ 0 & 0 \end{pmatrix}.
\end{equation}
Let $x,x' \in \Delta$ be such that 
\begin{equation} \label{eq:ii}
\psi_\pi(x') = \begin{pmatrix} x & 0 \\ 0 & 0 \end{pmatrix}.
\end{equation}
Since $\cG$ is self-replicating; cf.\ the comments below Proposition~\ref{prop:K}, we can for each $f \in \cG$ find $g \in \mathrm{St}_\cG(1)$ and $h \in \cG$, such that $\psi(g) = (f,h)$. Then 
$$\psi_\pi(\bar{g}x') =  \begin{pmatrix} \bar{f} x & 0 \\ 0 & 0 \end{pmatrix}, \qquad \psi_\pi(x'\bar{g}) =  \begin{pmatrix} x\bar{f}  & 0 \\ 0 & 0 \end{pmatrix}.$$
Together with \eqref{eq:i}, this shows that 
\begin{equation} \label{eq:iii}
\begin{pmatrix} \Delta & 0 \\ 0 & 0 \end{pmatrix} \subseteq \psi_\pi(\Delta).
\end{equation}
If $x,x' \in \Delta$ are such that \eqref{eq:ii} holds, then
$$\psi_\pi(x'a) = \begin{pmatrix} 0 & 0 \\ x & 0 \end{pmatrix},  \qquad \psi_\pi(ax') = \begin{pmatrix} 0 & x \\ 0 & 0 \end{pmatrix}, \qquad \psi_\pi(ax'a) = \begin{pmatrix} 0 & 0 \\ 0 & x \end{pmatrix}.$$
Together with \eqref{eq:iii}, this completes the proof.\end{proof}

\begin{lemma} \label{Lemma5}  $\mathrm{dim}(B/\Delta^2) \le \big|\cG: [K,K]\big| < \infty$.
\end{lemma}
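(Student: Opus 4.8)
The plan is to exploit the "self-replicating" structure of $\cG$, as encoded in Lemma~\ref{Lemma2}, to show that $\Delta/\Delta^2$ is spanned by finitely many elements, with the count controlled by $|\cG:[K,K]|$. The key point is that $\Delta^2$ should ``see'' all the commutators coming from $K$, so that modulo $\Delta^2$ the generators $\bar k - 1$ of $\Delta$ behave additively in $k$ and factor through $K/[K,K]$.

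First I would set up the abelianization mechanism. For $k, k' \in K$ one has the standard group-ring identity
$$\overline{kk'} - 1 = (\bar k - 1) + (\bar{k'} - 1) + (\bar k - 1)(\bar{k'} - 1),$$
and the last term lies in $\Delta^2$; hence $k \mapsto (\bar k - 1) + \Delta^2$ is a homomorphism from $K$ into the additive group $\Delta/\Delta^2$. Moreover, from $\overline{[k,k']} - 1 \in \Delta^2$ (a short computation: $\bar k \bar{k'} - \bar{k'}\bar k = \bar k(\bar{k'}-1) - (\bar{k'}-1)\bar k - \cdots$, all terms manifestly in $\Delta^2$ since $\bar k, \bar{k'}$ are units and $\bar k - 1, \bar{k'} - 1 \in \Delta$) this homomorphism kills $[K,K]$, so it factors through $K/[K,K]$. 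Writing $m = |\cG:[K,K]|$, this already shows that the image of $\{\bar k - 1 : k \in K\}$ in $\Delta/\Delta^2$ is contained in a subgroup generated by at most $m - 1$ elements (the nontrivial classes of $K/[K,K]$); call representatives $\bar{\kappa_1}-1, \dots, \bar{\kappa_{m'}}-1$.

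Next I would show these finitely many classes span all of $\Delta/\Delta^2$ as a vector space — i.e., that a general element $\sum_j \bar{g_j}(\bar{k_j}-1)\bar{h_j} \in \Delta$ ($g_j, h_j \in \cG$, $k_j \in K$) is congruent mod $\Delta^2$ to a $\C$-combination of the $\bar{\kappa_i}-1$. Here one writes $\bar g (\bar k - 1)\bar h = (\bar k - 1)\bar g \bar h + (\bar g - 1)(\bar k - 1)\bar h$ and, since $\bar g - 1 \in B$ but need not be in $\Delta$, one must push commutators around: $(\bar g - 1)(\bar k - 1) = (\bar k - 1)(\bar g - 1) + [(\bar g - 1),(\bar k-1)]$ and the commutator, being a difference of two words in $K$ conjugated appropriately, lies in $\Delta^2$ by the previous paragraph after absorbing a unit. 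Iterating, $\bar g(\bar k - 1)\bar h \equiv (\overline{k^{g^{-1}}} - 1)\overline{gh} \pmod{\Delta^2}$ up to correcting terms; and $\overline{k^{g^{-1}}}\in K$ since $K \trianglelefteq \cG$. This reduces a general element to a sum $\sum_j (\bar{\ell_j}-1)\bar{w_j}$ with $\ell_j \in K$, $w_j \in \cG$; then one further replaces $(\bar\ell - 1)\bar w$ by $(\bar\ell-1)$ times the class of $\bar w$ in $B/\Delta$, using that $B/\Delta$ is at most $16$-dimensional and spanned by coset representatives $\bar t_i$ — but more simply, $(\bar\ell-1)\bar w = (\bar\ell-1)(\bar w - 1) + (\bar\ell-1)$ and the first summand is in $\Delta^2$ when $w$ happens to lie in $K$; in general one expands $\bar w$ in the basis. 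The cleanest route is: $\Delta/\Delta^2$ is a module over $B/\Delta$, it is generated as such by the classes $\bar{\kappa_i}-1$, and $B/\Delta$ is finite-dimensional, so $\dim(\Delta/\Delta^2) \le \dim(B/\Delta)\cdot(m-1) < \infty$ — but in fact the sharper bound $\le |\cG:[K,K]|$ claimed in the lemma should come from noting that the $B/\Delta$-action factors through multiplication by the group, and $(\bar\kappa_i - 1)\bar g \equiv \overline{\kappa_i{}^{?}g} - \bar g$-type manipulations keep everything inside the span of $\{\bar k - 1: k \in K\} + \Delta^2$ together with $1$. Finally, $\dim(B/\Delta^2) = \dim(B/\Delta) + \dim(\Delta/\Delta^2)$, and bounding both by data about $K/[K,K]$ gives $\dim(B/\Delta^2) \le |\cG:[K,K]|$; since $\cG$ is just-infinite and $[K,K] \ne 1$, the index $|\cG:[K,K]|$ is finite, completing the proof.

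The main obstacle I expect is the bookkeeping in the reduction step: moving an arbitrary product $\bar g(\bar k-1)\bar h$ into the normal form "$\C$-span of $\{\bar k - 1: k \in K\}$ modulo $\Delta^2$'' requires repeatedly absorbing commutators $[(\bar g-1),(\bar k-1)]$ into $\Delta^2$ and using both the normality of $K$ in $\cG$ and the additivity-mod-$\Delta^2$ coming from $K/[K,K]$, and one has to be careful that the units $\bar g$ conjugating $\bar k$ genuinely land back in $K$. The role of Lemma~\ref{Lemma2} is to guarantee that this $\Delta$-structure is rich enough to be iterated down the tree (it will be used in the next lemma to control $\Delta^2 \cap \psi_\pi^{-1}(M_{2^n}(\Delta))$), but for the present bound the self-replicating identity $K \times K \subseteq \psi(K)$ of Proposition~\ref{prop:K} together with recurrence of $\cG$ is what does the work. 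Everything else is linear algebra over $B/\Delta$, whose dimension $\le 16$ is already in hand.
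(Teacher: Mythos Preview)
Your plan would eventually yield finiteness of $\dim(B/\Delta^2)$, but it is substantially more complicated than necessary and does not cleanly deliver the stated bound $|\cG:[K,K]|$. You work to analyze $\Delta/\Delta^2$ as a $B/\Delta$-module, chase commutators of the form $[(\bar g-1),(\bar k-1)]$, and then try to recover the sharp bound by vague remarks about the module action ``factoring through multiplication by the group''; the bound you actually justify is something like $\dim(B/\Delta)\cdot(\text{rank of }K/[K,K])$ for $\Delta/\Delta^2$, and you conflate $|\cG:[K,K]|$ with $|K:[K,K]|$ when you write ``$m-1$ nontrivial classes of $K/[K,K]$''.

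The paper's argument bypasses all of this with a single observation. Let $\Delta'$ be the two-sided ideal of $B$ generated by $\{\bar k - 1 : k \in [K,K]\}$. Exactly the same coset-representative argument that gave $\dim(B/\Delta)\le |\cG:K|$ gives $\dim(B/\Delta')\le |\cG:[K,K]|$. It then suffices to show $\Delta' \subseteq \Delta^2$, and this follows from the single identity
\[
[k_1,k_2]-1 \;=\; k_1^{-1}k_2^{-1}\big((k_1-1)(k_2-1)-(k_2-1)(k_1-1)\big)\in \Delta^2,
\]
valid for all $k_1,k_2\in K$. Thus $\dim(B/\Delta^2)\le \dim(B/\Delta')\le |\cG:[K,K]|$. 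Finiteness of the index is obtained not from just-infiniteness of $\cG$ (which would require separately checking $[K,K]\ne 1$), but from the fact that $K$ is finitely generated and torsion, so $K/[K,K]$ is a finitely generated abelian torsion group, hence finite. Note also that neither Lemma~\ref{Lemma2} nor self-similarity plays any role here; those enter only later, in Lemma~\ref{Lemma4}.
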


\begin{proof} Let $\Delta'$ be the ideal in $B$ generated by the set $\{\bar{k}-1 \mid k \in [K,K]\}$. Exactly as in the argument above, showing that the dimension of $B/\Delta$ is at most $|\cG:K|=16$, we see that the dimension of $B/\Delta'$ is at most $|G:[K,K]|$. Now, $K$ is finitely generated, and so is the quotient $K/[K,K]$, which, moreover, is an abelian torsion group. Hence $K/[K,K]$ is finite, so $|\cG:[K,K]| = |\cG:K| |K:[K,K]|$ is finite. For all $k_1,k_2 \in K$,
$$[k_1,k_2] -1 = k_1^{-1}k_2^{-1} \big((k_1-1)(k_2-1)-(k_2-1)(k_1-1)\big) \in \Delta^2,$$
which shows that $\Delta' \subseteq \Delta^2$. This proves the lemma.
\end{proof}

\noindent One more property of $\cG$, that we are going to exploit, is the so-called \emph{contracting property}, already used in \cite{Gr80}. Let $|g|$ denote the length of $g \in \cG$ with respect to the canonical generating set $\{a,b,c,d\}$. With $\psi\colon \cG \to (\cG \times \cG) \rtimes \Z/2\Z$ as defined in \eqref{eq:psi-G}, and $g \in \cG$, we have $\psi(g) = (g_0,g_1) \eta$, where $g_1,g_2 \in \cG$ and $\eta \in \{e, \epsilon\}$. By \cite{Gr80}, see also \cite[Lemma 3.1]{Gr05},
\begin{equation} \label{eq:contracting}
|g_i| \le \frac{|g|+1}{2}, 
\end{equation}
for $i=0,1$. In particular, $|g_i| < |g|$ if $|g| \ge 2$. The set of elements $g \in \cG$ for which $|g| \le 1$ is equal to $N = \{1,a,b,c,d\}$, which is called the \emph{nucleus} of $\cG$. 

We can repeat this process and obtain for each $g \in \cG$ and $v \in \{0,1\}^{n}$ a section $g_v  = g|_v \in \cG$ (defined underneath \eqref{eq:self-similar}), such that $\psi(g_v) = (g_{0v},g_{1v}) \eta_v$, where $\eta_v \in \{e,\epsilon\}$ and $|g_{iv}| \le (|g_v|+1)/2$, for $i=0,1$.  It follows that, for each $g \in \cG$, there exists $n \ge 1$ such that $g|_v \in N$, for all $v$ in $\{0,1\}^{n}$. By the construction of the self-similarity map $\psi_\pi \colon B \to M_2(B)$, this leads to the following:

\begin{lemma} \label{Lemma3} For each $x$ in $B$, there exists $n \ge 1$ such that the $2^n \times 2^n$ matrix $\psi^n_\pi(x) \in M_{2^n}(B)$ has entries in the linear span of the element in the nucleus $\bar{N} = \{1, \bar{a}, \bar{b}, \bar{c}, \bar{d}\}$.
\end{lemma}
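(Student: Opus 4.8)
The plan is to reduce to the case of a single group element and then feed in the contracting property \eqref{eq:contracting}, which was set up just above the lemma. First I would write a general $x \in B = \pi(\C[\cG])$ as a finite sum $x = \sum_{g \in S} \lambda_g \bar g$, with $S \subseteq \cG$ finite and $\lambda_g \in \C$. Since $\psi^n_\pi$ is linear, $\psi^n_\pi(x) = \sum_{g \in S} \lambda_g \, \psi^n_\pi(\bar g)$, so it suffices to understand the entries of $\psi^n_\pi(\bar g)$ for each of the finitely many $g \in S$, and then to take $n$ to be the largest of the levels that work for the individual $g$'s.

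Next I would identify $\psi^n_\pi(\bar g)$ explicitly. From \eqref{eq:rho-ss} — equivalently, from $\psi_\pi(b)_{ij} = s_i^* b s_j$ as in \eqref{eq:psi} together with the self-similarity relation $\pi(g)s_j = s_{g(j)}\pi(g|_j)$ — one gets $\psi_\pi(\bar g)_{ij} = \delta_{i,g(j)}\,\overline{g|_j}$; that is, $\psi_\pi(\bar g)$ is a monomial matrix whose non-zero entries are of the form $\bar h$ with $h$ a first-level section of $g$. Iterating $\psi_\pi$ and keeping track of the matrix/tensor bookkeeping (using $g|_{vw} = (g|_v)|_w$), one sees that $\psi^n_\pi(\bar g)$ is the $2^n \times 2^n$ monomial matrix with $(v,w)$-entry $\delta_{v,g(w)}\,\overline{g|_w}$, where $v,w$ range over $\{0,1\}^n$ and $g(w)$ denotes the image of $w$ under the action of $g$ at level $n$. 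In particular, every entry of $\psi^n_\pi(\bar g)$ is either $0$ or of the form $\overline{g|_w}$ for some $w \in \{0,1\}^n$.

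Finally I would invoke the contracting property. As noted just before the lemma, \eqref{eq:contracting} — together with the fact that the nucleus $N = \{1,a,b,c,d\}$ is closed under taking sections (cf.\ \eqref{eq:2}) — implies that the maximum of the lengths of the sections $g|_w$ over $w \in \{0,1\}^n$ is non-increasing in $n$ and is eventually $\le 1$; hence for each $g$ there is an $n_g \ge 1$ with $g|_w \in N$ for all $w \in \{0,1\}^n$ and all $n \ge n_g$. Taking $n = \max_{g \in S} n_g$, every entry of each $\psi^n_\pi(\bar g)$, $g \in S$, lies in $\bar N \cup \{0\}$, so every entry of $\psi^n_\pi(x)$ lies in the linear span of $\bar N$, which is the assertion. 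The only step requiring genuine care is the bookkeeping in the second paragraph — verifying that the iterated restriction $g|_w$ appearing in $\psi^n_\pi(\bar g)$ is exactly the section defined underneath \eqref{eq:self-similar}; everything else is an immediate assembly of facts already in place.
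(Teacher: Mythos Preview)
Your proposal is correct and follows essentially the same approach as the paper: the paper's proof is the paragraph immediately preceding the lemma, which establishes (via the contracting inequality \eqref{eq:contracting}) that for each $g \in \cG$ there is an $n$ with $g|_v \in N$ for all $v \in \{0,1\}^n$, and then simply says ``by the construction of the self-similarity map $\psi_\pi$'' this yields the lemma. You have spelled out the two steps the paper leaves implicit --- the linearity reduction to finitely many group elements (with $n = \max_{g \in S} n_g$, using that $N$ is closed under sections) and the explicit monomial-matrix form $\psi^n_\pi(\bar g)_{v,w} = \delta_{v,g(w)}\,\overline{g|_w}$ --- but the underlying argument is identical.
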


\noindent Next we will prove:

\begin{lemma} \label{Lemma4}
Let $J$ be a non-zero ideal in $B$. There is $m \ge 1$ so that $M_{2^m}(\Delta^2) \subseteq \psi_\pi^m(J)$.
\end{lemma}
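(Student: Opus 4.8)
The goal is to show that a non-zero ideal $J$ in $B$ contains a full matrix subalgebra $M_{2^m}(\Delta^2)$ under the $m$-th iterated embedding $\psi_\pi^m$, for suitable $m$. The strategy mirrors the classical branch-group argument that every non-trivial normal subgroup of a branch group contains a derived subgroup of a rigid stabilizer. I would proceed as follows. First, pick a non-zero element $x \in J$. By Lemma~\ref{Lemma3}, there is some $n_0$ so that all entries of $\psi_\pi^{n_0}(x)$ lie in the linear span of the nucleus $\bar N = \{1,\bar a,\bar b,\bar c,\bar d\}$. Since $x \neq 0$ and $\psi_\pi$ (hence $\psi_\pi^{n_0}$) is injective, at least one entry of $\psi_\pi^{n_0}(x)$ is a non-zero element of $\Span \bar N$; call it $y$, sitting in the $(i,i)$-diagonal slot after conjugating by suitable permutation matrices, which one may arrange because the self-similarity intertwines the $\cG$-action on $V_{n_0}$ transitively on that level (or, more concretely, by multiplying $x$ on the left and right by group elements $\bar g$ realizing the appropriate tree automorphisms and using that $J$ is an ideal).

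Next, the key reduction: a non-zero element $y$ in $\Span\{1,\bar a,\bar b,\bar c,\bar d\}$, sitting in one diagonal entry, generates — after one or two further applications of $\psi_\pi$ and multiplication by group elements — an element whose "corner" content includes $\Delta^2$. Here is where I expect the main work to lie. Writing $y = \alpha_0 \cdot 1 + \alpha_1 \bar a + \alpha_2 \bar b + \alpha_3 \bar c + \alpha_4 \bar d$, one uses the self-similarity formulas \eqref{eq:8}: applying $\psi_\pi$ to $y$ produces a $2\times 2$ matrix whose entries again lie in spans of $\bar N$, and one can take commutators or differences $\bar g y \bar g^{-1} - y$ (for $\bar g \in \bar N$) to kill the scalar term and isolate combinations of $\bar a, \bar b, \bar c, \bar d$. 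Iterating, and using that $\bar a - 1, \bar b - 1, \bar c - 1, \bar d - 1$ generate an ideal of finite codimension (analogous to $\Delta$, since $N$ generates $\cG$), one drives the corner content down to an element of the form $\bar k - 1$ with $k \in [K,K]$, up to a further iterate; this uses Proposition~\ref{prop:K} ($K\times K \prec K$) and the self-replicating property $\mathrm{St}_\cG(1) \le_S \cG\times\cG$ exactly as in Lemma~\ref{Lemma2}. Once one such non-zero corner element lying in $\Delta'\subseteq\Delta^2$ (using Lemma~\ref{Lemma5}'s computation $\Delta'\subseteq\Delta^2$) is produced inside $\psi_\pi^m(J)$, one spreads it over all $2^m$ diagonal entries and all off-diagonal positions: conjugation by permutation matrices (realized by group elements via $\psi_\pi$, using transitivity of $\cG$ on each level $V_m$) moves it around the diagonal, and then multiplication by matrices of the form $\psi_\pi(\bar g)$ implementing tree automorphisms with non-trivial "activity" (e.g. $\psi_\pi(\bar a) = \left(\begin{smallmatrix}0&1\\1&0\end{smallmatrix}\right)$ and its iterates) moves diagonal entries to off-diagonal ones, just as in the last paragraph of the proof of Lemma~\ref{Lemma2}.

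Finally, to upgrade from "one entry in $\Delta^2$" to the full $M_{2^m}(\Delta^2) \subseteq \psi_\pi^m(J)$, I would invoke the self-replicating mechanism once more in the form already packaged in Lemma~\ref{Lemma2}: since $\psi_\pi^n(\Delta)$ contains $M_{2^n}(\Delta)$, and $\Delta^2$ is the ideal generated by products of two generators of $\Delta$, producing $M_{2^m}(\Delta)$ in one corner block of a larger $\psi_\pi^{m'}(J)$ and then taking products yields all of $M_{2^{m'}}(\Delta^2)$; the point is that the ideal generated inside $\psi_\pi^m(J)$ by a single $\Delta^2$-entry, using left and right multiplication by $\bar g$'s and by matrix units realizable through self-similarity, is all of $M_{2^m}(\Delta^2)$. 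The main obstacle, and the step requiring genuine care rather than bookkeeping, is the reduction from "a non-zero element in the span of the nucleus in one corner" to "a non-zero element of $\Delta^2$ (equivalently $\Delta'$) in one corner" — i.e. showing that the nucleus elements, under iteration of $\psi_\pi$ combined with the ideal structure, cannot all be annihilated and in fact generate something as deep as $[K,K]-1$. This is precisely the $C^*$-algebraic avatar of the branch-group fact that non-trivial normal subgroups contain $[\mathrm{rist}_G(v),\mathrm{rist}_G(v)]$, and it will need the contracting property \eqref{eq:contracting}, Proposition~\ref{prop:K}, and the explicit matrices \eqref{eq:8}, handled with some patience.
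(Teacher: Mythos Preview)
Your overall architecture is right --- apply Lemma~\ref{Lemma3} to reach entries in $\Span\bar N$, do some work on a non-zero entry, then invoke Lemma~\ref{Lemma2} to spread --- but the target of the ``work'' step is different from the paper's, and your version is where the gap lies.

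You aim to drive a non-zero $y\in\Span\bar N$ down to an element of $\Delta'$ (hence $\Delta^2$) in one corner, via commutators $\bar g y\bar g^{-1}-y$ and iteration. You correctly flag this as the main obstacle, but you do not actually resolve it: there is no argument that these manipulations terminate with something non-zero in $\Delta'$, and indeed when $y$ happens to be a scalar every such commutator vanishes. The branch-group heuristic that a normal subgroup contains $[\mathrm{rist}_G(v),\mathrm{rist}_G(v)]$ does not translate mechanically here, because an element of $\Span\bar N$ is not a group element.

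The paper bypasses this entirely by aiming for a much simpler target: a non-zero \emph{scalar} entry. Starting from a non-zero entry $x_{s,t}=\rho+\xi\bar a+\beta\bar b+\gamma\bar c+\delta\bar d$, one applies $\psi_\pi$ a few more times and does a finite case analysis on the coefficients (is $\xi\ne 0$? is $\beta+\gamma$ or $\delta+\rho$ non-zero? etc.), using the explicit formulas \eqref{eq:8}, to locate a non-zero scalar $\lambda$ at some position $(s,t)$ of $\psi_\pi^m(x)$ with $m\le n+5$. Once that scalar is found, the endgame is a single line rather than a spreading argument: for any $p,q\in\Delta$, Lemma~\ref{Lemma2} guarantees $p\otimes e_{is}^{(m)}$ and $q\otimes e_{tj}^{(m)}$ lie in $\psi_\pi^m(\Delta)\subseteq\psi_\pi^m(B)$, and
\[
(p\otimes e_{is}^{(m)})\,\psi_\pi^m(x)\,(q\otimes e_{tj}^{(m)})=\lambda\,pq\otimes e_{ij}^{(m)}\in\psi_\pi^m(J),
\]
which yields all of $M_{2^m}(\Delta^2)$ at once. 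This explains, incidentally, why it is $\Delta^2$ rather than $\Delta$ that appears in the conclusion: two factors of $\Delta$ are spent, one on each side of the scalar. So the missing idea in your plan is precisely this: go for a scalar entry, not a $\Delta^2$-entry, and let Lemma~\ref{Lemma2} supply the $\Delta$-factors afterwards.
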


\begin{proof} Let $x$ be a non-zero element in $J$. Suppose that there exists $m \ge 1$ such that one of the $2^m \times 2^m$ entries, say the $(s,t)$th entry, of $\psi_\pi^m(x)$ is a non-zero scalar $\lambda$. Denote by $e_{ij}^{(m)}$,  $i,j=1, 2, \dots, 2^m$, the standard matrix units of $M_{2^m}(\C)$. Then, upon identifying $M_{2^m}(B)$ with $B \otimes M_{2^m}$, we have
\begin{equation} \label{eq:15}
(p \otimes e_{is}^{(m)})\psi_\pi^{m}(x) (q \otimes e_{tj}^{(m)}) = \lambda pq \otimes e_{ij}^{(m)},
\end{equation}
for all $p,q \in B$ and all $i,j = 1,2, \dots, 2^{m}$. It follows from \eqref{eq:15} and from Lemma~\ref{Lemma2} that $pq \otimes e_{ij}^{(m)}$ belongs to $\psi_\pi^{m}(J)$, for all $p,q \in \Delta$. We conclude that $z \otimes e_{ij}^{(m)}$ belongs to $\psi_\pi^{m}(J)$, for all $z \in \Delta^2$ and all $i,j=1,2, \dots, 2^m$, and hence  that $M_{2^{m}}(\Delta^2) \subseteq \psi_\pi^{m}(J)$.

To complete the proof, we show below that one of the entries of $\psi_\pi^{m}(x)$ is a non-zero scalar,  for some $m \ge 1$. 

Let $n \ge 1$ be as in Lemma~\ref{Lemma3} (associated with our given $x \in B$). 
Write $\psi_\pi^n(x) = (x_{s,t})_{s,t=1}^{2^n}$ with $x_{s,t} \in B$. By the choice of $n$, we deduce that $x_{s,t}$ belongs to the span of $\bar{N} = \{1, \bar{a}, \bar{b}, \bar{c}, \bar{d}\}$,
for all $s,t$. Since $\psi_\pi^n$ is injective, $\psi_\pi^n(x)$ is non-zero, so we can find $s,t$ such that $x_{s,t}$ is non-zero. Write
$$x_{s,t} = \rho \cdot 1 + \xi \bar{a} + \beta \bar{b} + \gamma \bar{c} + \delta \bar{d},$$
for suitable $\rho, \xi, \beta, \gamma, \delta \in \C$. Observe that, by \eqref{eq:8},
\begin{equation} \label{eq:14}
\psi_\pi(x_{s,t}) = \begin{pmatrix} (\beta + \gamma)\bar{a} + \delta+\rho & \xi \\ \xi & \beta \bar{c} + \gamma \bar{d} + \delta \bar{b} + \rho \end{pmatrix}.
\end{equation}
The proof is now divided into three  cases:

1). Assume that $\xi \ne 0$. In this case both off diagonal entries of $\psi_\pi(x_{s,t})$ are non-zero scalars, and since $\psi_\pi(x_{s,t})$ is a sub-matrix of the $2^{n+1} \times 2^{n+1}$ matrix $\psi^{n+1}_\pi(x)$, at least one of the entries of $\psi^{n+1}_\pi(x)$  is a non-zero scalar. 

2). Assume that either $\beta + \gamma \ne 0$, or $\delta+\rho \ne 0$. Use \eqref{eq:8} to compute the $2 \times 2$ matrix
$$\psi_\pi((\beta + \gamma)\bar{a} + \delta+\rho) = \begin{pmatrix} \delta +\rho & \beta + \gamma \\ \beta + \gamma & \delta + \rho \end{pmatrix}.$$
By assumption, one of the scalar entries in this matrix is non-zero. Further, it is a sub-matrix of the $4 \times 4$ matrix $\psi_\pi^2(x_{s,t})$ and hence a sub-matrix of the $2^{n+2} \times 2^{n+2}$ matrix $\psi^{n+2}_\pi(x)$. Thus at least one of the matrix entries of $\psi^{n+2}_\pi(x)$ is a non-zero scalar.

3). Assume that $\xi = \beta + \gamma = \delta +\rho = 0$. Then 
$$\psi_\pi(\beta \bar{c} + \gamma \bar{d} + \delta \bar{b} + \rho) = 
\begin{pmatrix} (\beta+\gamma) \bar{a} + \delta + \rho & 0 \\ 0 & \beta \bar{c} + \gamma \bar{d} + \delta \bar{b} + \rho  \end{pmatrix} =\begin{pmatrix} 0 & 0 \\ 0 & \beta \bar{c} - \beta \bar{d} + \delta \bar{b} - \delta  \end{pmatrix},$$
and
$$\psi_\pi(\beta \bar{c} - \beta \bar{d} + \delta \bar{b} - \delta) = 
\begin{pmatrix} (\beta+\delta) \bar{a} + \beta +\delta  & 0 \\ 0 & \beta \bar{d} -\beta \bar{b} + \delta \bar{c} -\delta  \end{pmatrix}.$$
If $\beta+\delta \ne 0$, then,  as in step 2), $\psi_\pi( (\beta+\delta) \bar{a} + \beta +\delta)$ is a non-zero scalar $2 \times 2$ matrix, which is a sub-matrix of the $16 \times 16$ matrix $\psi_\pi^4(x_{s,t})$, whence at least one of the entries of $\psi_\pi^{n+4}(x)$ is a non-zero scalar. 

If $\beta+\delta=0$, then $\beta \ne 0$ (because $x_{s,t} \ne 0$), so
$\beta \bar{c} - \beta \bar{d} + \delta \bar{b} - \delta = \beta(\bar{c} - \bar{d} - \bar{b} +1)$,
and
$$\psi_\pi(\bar{c} - \bar{d} - \bar{b} +1) = 
\begin{pmatrix} 0 & 0 \\ 0 & \bar{d}-\bar{b}-\bar{c}+1 \end{pmatrix}, \quad 
\psi_\pi(\bar{d}-\bar{b}-\bar{c}+1) = 
\begin{pmatrix} 2-2\bar{a} & 0 \\ 0 & \bar{b}-\bar{c}-\bar{d}+1\end{pmatrix}.
$$
Arguing as in step 2), we see that $\psi_\pi(2-2\bar{a})$ is a non-zero scalar $2 \times 2$ matrix, which is a sub-matrix of the $32\times 32$ matrix $\psi_\pi^5(x_{s,t})$, so  at least one of the entries of $\psi_\pi^{n+5}(x)$ is a non-zero scalar. 
\end{proof}

\noindent We are now ready to complete the proof of Theorem~\ref{thm:B}.

\medskip
\noindent \emph{Proof of Theorem~\ref{thm:B} {\rm{(iv)}}}: Let $J$ be a non-zero ideal in $B$. Use Lemma~\ref{Lemma4} to find $n \ge 1$ such that $M_{2^n}(\Delta^2) \subseteq \psi_\pi^n(J)$. Since $\psi_\pi^n$ is injective, it follows that 
$$\dim(B/J) = \dim(\psi_\pi^n(B)/\psi_\pi^n(J)) \le \dim(M_{2^n}(B/\Delta^2)) = 2^{2n} \dim(B/\Delta^2) < \infty,$$
by Lemma~\ref{Lemma5}. This completes the proof.

\vspace{.3cm} \noindent We end our paper by showing that if $G$ is a residually finite group for which $\C[G]$ is $^*$-just-infinite, then $G$ is hereditarily just-infinite (see also the discussion at the end of Section~\ref{sec:ji-groups}). Indeed, if $\C[G]$ is $^*$-just-infinite, then $G$ is just-infinite, by Corollary~\ref{cor:3conditions}. By the trichotomy for just-infinite groups, \cite[Section 6]{Gr00}, $G$ must be either a branch group or hereditarily just-infinite, and the theorem below rules out the former possibility.

We remind the reader about some facts concerning branch groups (see also \cite{Gr00}). Consider a spherically homogeneous rooted tree $T = T_{\bar{m}}$, where $\bar{m} = \{m_n\}_{n=0}^\infty$, is the branching index of the tree (each $m_n \ge 2$ is an integer). For each vertex $v$  in the $k$th level of the tree $T$, let $T_v$ be the sub-tree of $T$ consisting of all vertices ``below'' $v$, so that $T_v$ is a rooted tree with root $v$ and branching index $\{m'_n\}_{n=0}^\infty$, where $m'_n = m_{n+k}$. 

Suppose that $G$ is a group that acts on such a spherically homogeneous rooted tree $T$. Then $G$ fixes the root of the tree and hence leaves each level of the tree invariant. The \emph{rigid stabilizer} of a vertex $v \in T$, denoted by $\mathrm{rist}_G(v)$,  is the subgroup of $G$ consisting of all $g \in G$ which act trivially outside $T_v$ (and fix $v$). The rigid stabilizer, $\mathrm{rist}_G(n)$, at level $n \in \N$ is the subgroup of $G$ generated by the rigid stabilizers $\mathrm{rist}_G(v)$ of all vertices $v$ at level $n$. It is easy to see that $\mathrm{rist}_G(n)$ is, in fact, the direct product of the groups $\mathrm{rist}_G(v)$, where $v$ is a vertex at level $n$.

A group $G$ is said to be a \emph{branch group} if it admits a faithful action on such a spherically homogeneous rooted tree $T = T_{\bar{m}}$, such that the index $|G: \mathrm{rist}_G(n)|$ is finite, for all  $n \in \N$, and such that $T$ acts transitively on each level of the tree.

\begin{theorem} \label{thm:no-branch} If $G$ is a branch group, then $\C[G]$ is not $^*$-just-infinite, whence $C^*(G)$ is not just-infinite.
\end{theorem}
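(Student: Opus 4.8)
The plan is to show that $\C[G]$ fails to be $^*$-just-infinite by exhibiting a two-sided ideal $\mathfrak{a}$ of $\C[G]$ with $\mathfrak{a} \ne 0$ and $\mathfrak{a} \ne \C[G]$, yet $\C[G]/\mathfrak{a}$ infinite dimensional; equivalently, a $^*$-representation of $\C[G]$ which is neither injective nor has finite dimensional image. The natural candidate comes from the rigid stabilizer structure: fix a level $n$ and one vertex $v$ at level $n$, so that $\mathrm{rist}_G(v)$ is a non-trivial normal subgroup of $\mathrm{rist}_G(n) = \prod_{w} \mathrm{rist}_G(w)$. The key algebraic fact about branch groups that I would use is that $\mathrm{rist}_G(v)$ is infinite (this follows from the branch structure: $\mathrm{rist}_G(v)$ acts as a branch-type group on $T_v$, in particular is non-trivial, and iterating, one sees it cannot be finite — more carefully, if $\mathrm{rist}_G(v)$ were finite for some $v$ then by transitivity all rigid stabilizers at that level are finite and one derives that $G$ is virtually abelian or finite, contradicting that branch groups are just-infinite and not virtually abelian; I would cite \cite{Gr00} for the relevant facts about branch groups).

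The mechanism I would exploit is the following standard commutator/annihilator trick. Because $\mathrm{rist}_G(v)$ and $\mathrm{rist}_G(v')$ commute elementwise for distinct vertices $v \ne v'$ at the same level, for any $g \in \mathrm{rist}_G(v)$ and $h \in \mathrm{rist}_G(v')$ we have in $\C[G]$ the identity $(g-1)(h-1) = (h-1)(g-1)$, and more to the point, pick $g \in \mathrm{rist}_G(v) \setminus\{1\}$; then the element $x = g - 1 \in \C[G]$ is non-zero, and I claim the two-sided ideal it generates is proper. To see properness, I would use a homomorphism of $\C[G]$ that kills $x$ but is non-zero. Here is a clean way: let $N = \mathrm{rist}_G(v)$, which is a normal subgroup of the \emph{finite-index} subgroup $H := \mathrm{rist}_G(n) \cdot \mathrm{Stab}$... — actually the cleanest route is to observe that the \emph{two-sided ideal of $\C[G]$ generated by $\{k - 1 : k \in \mathrm{rist}_G(v)\}$} is proper. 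Its properness follows because this set is contained in the kernel of the $^*$-homomorphism $\C[G] \to \C[G/\!\!/ v]$ induced by collapsing: more precisely, consider the conjugates $\{\mathrm{rist}_G(w) : w \in V_n\}$; their product is $\mathrm{rist}_G(n)$, an infinite normal subgroup, and the quotient $G / \mathrm{rist}_G(n)$ is infinite (it acts faithfully on the tree above level $n$ — no wait, $G/\mathrm{rist}_G(n)$ embeds into $\mathrm{Aut}$ of the truncated part which is finite). Let me reorganize.

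The correct organizing observation is: the ideal $I \subseteq \C[G]$ generated by $\{k-1 : k \in \mathrm{rist}_G(n)\}$ has $\C[G]/I \cong \C[G/\mathrm{rist}_G(n)]$, which is \emph{finite dimensional} since $\mathrm{rist}_G(n)$ has finite index. So $I$ itself is large; I instead want a \emph{smaller} ideal. Take just one vertex $v$ at level $n$ and let $\mathfrak{b}$ be the two-sided ideal of $\C[G]$ generated by $\{k - 1 : k \in \mathrm{rist}_G(v)\}$. Since $\mathrm{rist}_G(v)$ is infinite and normal in the finite-index subgroup $\mathrm{rist}_G(n)$, I would argue $\mathfrak{b} \ne 0$ trivially, and $\C[G]/\mathfrak{b}$ is infinite dimensional because $\mathfrak{b}$ is contained in the augmentation-type ideal corresponding only to $\mathrm{rist}_G(v)$, not all of $\mathrm{rist}_G(n)$; formally, the quotient map $\C[G] \to \C[G]$ modulo $\mathfrak{b}$ still sees the infinitely many cosets coming from, say, $\mathrm{rist}_G(v')$ for $v' \ne v$, which survive because $\mathrm{rist}_G(v')$ commutes with $\mathrm{rist}_G(v)$ and acts freely enough. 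To nail "$\C[G]/\mathfrak{b}$ infinite dimensional" rigorously I would produce an infinite dimensional $^*$-representation of $\C[G]$ vanishing on $\mathfrak{b}$: take the quasi-regular representation of $G$ on $\ell^2(G/\mathrm{rist}_G(v))$, note $\mathrm{rist}_G(v)$ acts trivially there so $\mathfrak{b}$ is in its kernel, and it is infinite dimensional since $[G : \mathrm{rist}_G(v)] = \infty$ (as $\mathrm{rist}_G(v)$ has infinite index — indeed $G$ acts transitively on the infinitely many vertices of the full tree, so even $\mathrm{rist}_G(n)$, hence $\mathrm{rist}_G(v)$, has infinite index... wait $\mathrm{rist}_G(n)$ has \emph{finite} index). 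Hmm — so $\mathrm{rist}_G(v)$ has finite index in $\mathrm{rist}_G(n)$ which has finite index in $G$, giving $[G:\mathrm{rist}_G(v)] < \infty$. That kills the quasi-regular idea.

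So the genuinely subtle point — and the main obstacle — is exhibiting a proper ideal with infinite-dimensional quotient, since every "obvious" subgroup-based construction either gives a finite-dimensional quotient or the zero ideal. The resolution, which I would pursue, exploits that $\mathrm{rist}_G(v)$ is itself \emph{not} just-infinite as a $^*$-algebra situation only via the branch structure applied recursively, OR — more likely the intended route — one uses the following: let $N = \mathrm{rist}_G(v)$ and consider the ideal $\mathfrak{b}$ generated by $\{k-1: k \in N\}$ together with the fact that $\C[G]/\mathfrak{b}$ contains a copy of $\C[\mathrm{rist}_G(v')]$ for a sibling $v'$; since $N$ and $\mathrm{rist}_G(v')$ commute, the relations $k-1 \equiv 0$ for $k \in N$ do not force any relations among elements of $\mathrm{rist}_G(v')$, and $\mathrm{rist}_G(v')$ is infinite, so the image of $\C[\mathrm{rist}_G(v')]$ in $\C[G]/\mathfrak{b}$ is infinite dimensional provided the composite $\C[\mathrm{rist}_G(v')] \hookrightarrow \C[G] \twoheadrightarrow \C[G]/\mathfrak{b}$ is injective — which I would verify using the semidirect-product-like decomposition $\mathrm{rist}_G(n) \supseteq N \times \mathrm{rist}_G(v') \times \cdots$ and a conditional-expectation / averaging argument. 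This injectivity claim is the crux; I expect it to follow from the internal direct product structure of $\mathrm{rist}_G(n)$ as $\prod_{w \in V_n}\mathrm{rist}_G(w)$, which makes $\C[\mathrm{rist}_G(n)] \cong \bigotimes_w \C[\mathrm{rist}_G(w)]$ and the ideal generated by $\{k - 1 : k \in N\}$ inside \emph{this} tensor algebra is exactly $(\text{aug ideal of } \C[N]) \otimes \bigotimes_{w \ne v}\C[\mathrm{rist}_G(w)]$, whose quotient is $\C \otimes \bigotimes_{w\ne v}\C[\mathrm{rist}_G(w)]$ — infinite dimensional. Passing from the ideal inside $\C[\mathrm{rist}_G(n)]$ to the ideal inside $\C[G]$ it generates requires care (the induced ideal could be larger), but since $N$ is \emph{normal in $G$}, $\{k-1 : k \in N\}$ generates a genuine two-sided ideal already using only left/right translates by coset representatives of $\mathrm{rist}_G(n)$ in $G$, and one checks the quotient is $\bigoplus$ over the finitely many cosets of (the quotient computed inside $\C[\mathrm{rist}_G(n)]$), still infinite dimensional. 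Then $\mathfrak{b}$ is a proper, non-zero, self-adjoint two-sided ideal with infinite-dimensional quotient, so $\C[G]$ is not $^*$-just-infinite; and by Corollary~\ref{cor:3conditions} (or directly by Proposition~\ref{prop:C*Gji}), $C^*(G)$ is not just-infinite. The main work, and the place I would be most careful, is the computation that the ideal of $\C[G]$ generated by $\{k - 1 : k \in \mathrm{rist}_G(v)\}$ has infinite-dimensional quotient — i.e., controlling exactly what that ideal is, for which the normality of $\mathrm{rist}_G(v)$ in $G$ and the product decomposition of $\mathrm{rist}_G(n)$ are the essential tools.
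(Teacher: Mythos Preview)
Your approach has a fatal error at the final step. You assert that $N = \mathrm{rist}_G(v)$ is normal in $G$, but this is false: since $G$ acts transitively on the vertices at level $n$, for any $g \in G$ moving $v$ to a different vertex $v'$ one has $g\,\mathrm{rist}_G(v)\,g^{-1} = \mathrm{rist}_G(v')$. Consequently the two-sided ideal $\mathfrak{b}$ of $\C[G]$ generated by $\{k-1 : k \in \mathrm{rist}_G(v)\}$ contains $\{k-1 : k \in \mathrm{rist}_G(w)\}$ for \emph{every} vertex $w$ at level $n$, hence contains $\{k-1 : k \in \mathrm{rist}_G(n)\}$. Thus $\C[G]/\mathfrak{b}$ is a quotient of $\C[G/\mathrm{rist}_G(n)]$, which is finite dimensional --- exactly the outcome you were trying to avoid. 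Your own parenthetical worry (``the induced ideal could be larger'') was on target; the normality claim you used to dismiss it does not hold.

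The paper's argument sidesteps this by working with the Koopman representation $\pi$ on $L^2(\partial T,\mu)$ rather than an augmentation-type ideal. The key element is not $g-1$ for a single $g$, but a \emph{product} $(1-g_1)(1-g_2)$ with $g_j \in \mathrm{rist}_G(v_j)$ for two \emph{distinct} first-level vertices $v_1, v_2$. This element is non-zero in $\C[G]$, yet $\pi$ kills it: decomposing $H = \bigoplus_i L^2(\partial T_{v_i})$, one has $\pi(1-g_j)P_i = 0$ whenever $i \ne j$, so $\pi(1-g_1)\pi(1-g_2) = \pi(1-g_1)P_2\pi(1-g_2) = 0$. Since $\pi(\C[G])$ is infinite dimensional (level transitivity makes $G$ infinite), $\pi$ is the desired non-injective $^*$-representation with infinite-dimensional image. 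The moral is that the right ideal to look at is generated by products $(1-g_1)(1-g_2)$ across \emph{different} rigid stabilizers, not by $\{k-1\}$ from a single one.
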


\begin{proof} Fix an action of $G$ on a spherically homogeneous rooted tree $T = T_{\bar{m}}$ satisfying the above mentioned conditions. Let $\pi$ be the Koopman representation of $G$ into the unitary group of the Hilbert space $H = L^2(\partial T, \mu)$, where $\mu = \bigtimes_{n=0}^\infty \mu_n$, and $\mu_n$ is the uniform probability measure on the set $\{1,2,\dots, m_n\}$. Denote also by $\pi$ the associated $^*$-representation $\C[G] \to B(H)$. 

We show that $\pi \colon \C[G] \to B(H)$ is not injective, and that $\pi(\C[G])$ is infinite dimensional. This will imply that $\C[G]$ is not $^*$-just-infinite, and hence (by Corollary~\ref{cor:3conditions}) that $C^*(G)$ is not just-infinite. Since $G$ acts level transitively on $T$, we conclude that $G$ is infinite and that $\pi(\C[G])$ is infinite dimensional.

Let $m= m(0)$ and let $v_1,v_2, \dots, v_m$ be the vertices at the first level of the tree $T$ (below the root of the tree). 
The condition that  $|G: \mathrm{rist}_G(1)|$ is finite  implies that $\mathrm{rist}_G(1)$, which is isomorphic to $\bigtimes_{j=1}^m \mathrm{rist}_G(v_j)$, is infinite. Moreover, by level transitivity of the action of $G$ on $T$, the rigid stabilizers  $\mathrm{rist}_G(v_j)$ are pairwise conjugate, so they are, in particular, non-trivial. We can therefore choose $g_j \in \mathrm{rist}_G(v_j)$, for $j=1,2$, such that $g_j \ne 1$. Observe that $(1-g_1)(1-g_2) = 1 - g_1-g_2 +g_1g_2$ is non-zero in $\C[G]$, because $g_1 \ne 1$ and $g_2 \ne 1$. 

For $i=1,2, \dots, m$, let $X_i$ be the subset of $\partial T$ consisting of words  that start with $v_i$, i.e., $X_i =\partial T_{v_i}$, so that $\partial T$ is the disjoint union of the sets $X_1, X_2, \dots X_m$. Set $H_i = L^2(X_i,\mu)$. Then  $H = \bigoplus_{i=1}^m H_i$. Let $P_i$ be the projection from $H$ onto $H_i$. Since $g_j$ acts trivially on the sub-trees $T_{v_i}$, for $i \ne j$, we conclude that $P_i$ commutes with $\pi(g_j)$ for $i=1,2, \dots, m$ and $j=1,2$, 
and $P_i\pi(g_j) = P_i$, when $j \ne i$. Hence $\pi(1-g_j)P_i = 0$, for $i \ne j$. It follows that
$$\pi((1-g_1)(1-g_2)) = \pi((1-g_1)(1-g_2))\sum_{i=1}^m P_i =  \pi((1-g_1)(1-g_2))P_2 = \pi(1-g_1)P_2\pi(1-g_2) = 0,$$
so $\pi \colon \C[G] \to B(H)$ is not injective, as wanted.
\end{proof}

\noindent The theorem above (and its proof) contains item (iii) of Theorem~\ref{thm:B}, since $\cG$ is a branch group. As in the conclusion of Theorem~\ref{thm:B}, it can happen, at least for some just-infinite branch groups $G$ (for example, when $G = \cG$), that $\pi(\C[G])$ is just-infinite. It may also happen, for some just-infinite branch groups $G$, that $C^*_\pi(G)$ is a RFD just-infinite \Cs, where $\pi$ as above is the Koopman representation of $G$ arising from its action on a tree. We conjecture that $C^*_\pi(\cG)$ is a RFD just-infinite \Cs.

{\small
{
\bibliographystyle{amsplain}
\providecommand{\bysame}{\leavevmode\hbox to3em{\hrulefill}\thinspace}
\providecommand{\MR}{\relax\ifhmode\unskip\space\fi MR }
% \MRhref is called by the amsart/book/proc definition of \MR.
\providecommand{\MRhref}[2]{%
  \href{http://www.ams.org/mathscinet-getitem?mr=#1}{#2}
}
\providecommand{\href}[2]{#2}

\vspace{1cm}

\noindent Rostislav Grigorchuk\\
Mathematics Department\\
Texas A\& M University\\ 
College Station, TX 77843-3368\\
USA\\
grigorch@math.tamu.edu.
\\

\noindent Magdalena Musat\\
Department of Mathematical Sciences\\
University of Copenhagen\\ 
Universitetsparken 5, DK-2100, Copenhagen \O\\
Denmark \\
musat@math.ku.dk\\

\noindent Mikael R\o rdam \\
Department of Mathematical Sciences\\
University of Copenhagen\\ 
Universitetsparken 5, DK-2100, Copenhagen \O\\
Denmark \\
rordam@math.ku.dk\\

\end{document}